\documentclass[a4paper,12pt]{article}
\usepackage[latin1]{inputenc}
\usepackage{subfigure}
\usepackage{amssymb,amsmath,latexsym,times,amsthm}
\usepackage{tikz,mathrsfs,lscape}
\usetikzlibrary{trees}
\addtolength{\textwidth}{1.62in} \addtolength{\textheight}{1.2in}
\addtolength{\oddsidemargin}{-0.78in}
\addtolength{\topmargin}{-0.8in}
\usetikzlibrary{shapes.multipart,trees}
\usetikzlibrary{patterns,shapes.misc,mindmap}
\usepackage{pgflibraryshapes}
\usetikzlibrary{automata,arrows}
\usepackage[pdftex]{hyperref}
\hypersetup{plainpages=True, pdfstartview=FitV,
colorlinks=true,linkcolor=blue,citecolor=blue}
\def\dd#1{{\,\rm d}#1}
\def\ve{\varepsilon}
\def\JS{\mathscr{J\!\!S}}

\newtheorem{thm}{Theorem}[section]

\newtheorem{lmm}[thm]{Lemma}
\newtheorem{prop}[thm]{Proposition}

\newtheorem{definition}{Definition}

\bibliographystyle{abbrv}

\author{Hsien-Kuei Hwang
\\Institute of Statistical Science
\\Academia Sinica
\\ Taipei 115
\\ Taiwan
\and Michael Fuchs
\\Department of Applied Mathematics
\\ National Chiao Tung
University
\\
Hsinchu 300
\\ Taiwan
\and Vytas Zacharovas
\\ Institute of
Statistical Science\\ Academia Sinica\\ Taipei 115\\ Taiwan}
\title{Asymptotic variance of random symmetric digital search trees}

\begin{document}
\maketitle

\begin{abstract}

Asymptotics of the variances of many cost measures in random digital
search trees are often notoriously messy and involved to obtain. A
new approach is proposed to facilitate such an analysis for several
shape parameters on random symmetric digital search trees. Our
approach starts from a more careful normalization at the level of
Poisson generating functions, which then provides an asymptotically
equivalent approximation to the variance in question. Several new
ingredients are also introduced such as a combined use of the Laplace
and Mellin transforms and a simple, mechanical technique for
justifying the analytic de-Poissonization procedures involved. The
methodology we develop can be easily adapted to many other problems
with an underlying binomial distribution. In particular, the less
expected and somewhat surprising $n(\log n)^2$-variance for certain
notions of total path-length is also clarified.

\end{abstract}
\emph{Key words}: {Digital search trees, Poisson generating
functions, Poissonization, Laplace transform, Mellin transform,
saddle-point method, Colless index, weighted path-length}

\medskip\centerline{\emph{Dedicated to the 60th birthday of Philippe
Flajolet}}

\section{Introduction}

The variance of a distribution provides an important measure of
dispersion of the distribution and plays a crucial and, in many
cases, a determinantal r\^ole in the limit law\footnote{The first
formal use of the term ``variance" in its statistical sense is
generally attributed to R. A. Fisher in his 1918 paper (see
\cite{fisher18a} or Wikipedia's webpage on variance), although its
practical use in diverse scientific disciplines predated this by a
few centuries (including closely-defined terms such as mean-squared
errors and standard deviations).}. Thus finding more effective means
of computing the variance is often of considerable significance in
theory and in practice. However, the calculation of the variance can
be computationally or intrinsically difficult, either because of the
messy procedures or cancellations involved, or because the
dependence structure is too strong or simply because no simple
manageable forms or reductions are available. We are concerned in
this paper with random digital trees for which asymptotic
approximations to the variance are often marked by heavy
calculations and long, messy expressions. This paper proposes a
general approach to simplify not only the analysis but also the
resulting expressions, providing new insight into the methodology;
furthermore, it is applicable to many other concrete situations and
leads readily to discover several new results, shedding new light on
the stochastic behaviors of the random splitting structures.

\paragraph{A binomial splitting process.}
The analysis of many splitting procedures in computer algorithms
leads naturally to a structural decomposition (in terms of the
cardinalities) of the form
\begin{center}
\begin{tikzpicture}
\path[mindmap,concept color=blue!30,text=black] node[concept,minimum
size=2cm,text width=3cm]{\normalsize structure of size $n$}
child[concept color=blue!20, grow=210] {
    node[concept] (left) {substructure of size $B_n$} }
child[concept color=blue!20, grow=330] {
    node[concept] {substructure of size $\bar{B}_n$}};
\node [annotation,fill=blue!20] at (0.3,-3)
{\normalsize Here $B_n\approx$ Binomial and
$B_n+\bar{B}_n\approx n$.};
\end{tikzpicture}
\end{center}
where $B_n$ is essentially a binomial distribution (up to truncation
or small perturbations) and the sum of $B_n+\bar{B}_n$ is
essentially $n$.

Concrete examples in the literature include (see the books
\cite{drmota2009a,flajolet2009a,knuth1998a,
mahmoud1992a,szpankowski2001a} and below for more detailed
references)
\begin{itemize}
\item tries, contention-resolution tree algorithms, initialization
problem in distributed networks, and radix sort:
$B_n=\text{Binomial} (n;p)$ and $\bar{B}_n = n-B_n$, namely,
$\mathbb{P}(B_n=k)=\binom{n}{k} p^k q^{n-k}$ (here and throughout
this paper, $q:=1-p$);

\item bucket digital search trees (DSTs), directed diffusion-limited
aggregation on Bethe lattice, and Eden model: $B_n=\text{Binomial}
(n-b;p)$ and $\bar{B}_n = n-b-B_n$;

\item Patricia tries and suffix trees: $\mathbb{P}(B_n=k)
=\binom{n}{k} p^k q^{n-k}/(1-p^n-q^n)$ and $\bar{B}_n = n-B_n$.
\end{itemize}
Yet another general form arises in the analysis of multi-access
broadcast channel where
\[
    \left\{\begin{array}{l}
        B_n=\text{Binomial}(n;p)+\text{Poisson}(\lambda),\\
        \bar{B}_n = n-\text{Binomial}(n;p)+\text{Poisson}(\lambda),
    \end{array}\right.
\]
see \cite{fayolle86a,jacquet90a}. For some other variants, see
\cite{bai01a,chen03b,flajolet86e}. One reason of such a ubiquity of
binomial distribution is simply due to the binary outcomes (either
zero or one, either on or off, either positive or negative, etc.) of
many practical situations, resulting in the natural adaptation of
the Bernoulli distribution in the modeling.

\paragraph{Poisson generating function and the Poisson heuristic.}
A very useful, standard tool for the analysis of these binomial
splitting processes is the Poisson generating function
\[
    \tilde{f}(z)
    = e^{-z} \sum_{k\ge0} \frac{a_k}{k!}\, z^k,
\]
where $\{a_k\}$ is a given sequence, one distinctive feature being
the \emph{Poisson heuristic}, which predicts that
\begin{center}
    \framebox{
    \emph{If $a_n$ is smooth enough, then $a_n \sim \tilde{f}(n)$.}}
\end{center}
In more precise words, if the sequence $\{a_k\}$ does not grow too
fast (usually at most of polynomial growth) or does not fluctuate
too violently, then $a_n$ is well approximated by $\tilde{f}(n)$ for
large $n$. For example, if $\tilde{f}(z) = z^m$, $m=0,1,\dots$, then
$a_n \sim n^m$; indeed, in such a simple case, $a_n = n(n-1)\cdots
(n-m+1)$.

Note that the Poisson heuristic is itself a Tauberian theorem for the
Borel mean in essence; an Abelian type theorem can be found in
Ramanujan's Notebooks (see \cite[p.\ 58]{berndt1985a}).

From an elementary viewpoint, such a heuristic is based on the local
limit theorem of the Poisson distribution (or essentially Stirling's
formula for $n!$)
\[
    \frac{n^k}{k!}e^{-n}
    \sim \frac{e^{-x^2/2}}{\sqrt{2\pi n}}\left(1+\frac{x^3-3x}
    {6\sqrt{n}}+\cdots\right)\qquad(k=n+x\sqrt{n}),
\]
whenever $x=o(n^{1/6})$. Since $a_n$ is smooth, we then expect that
\begin{align*}
    \tilde{f}(n)
    \approx \sum_{\substack{k=n+x\sqrt{n}\\
    x=O(n^{\ve})}} a_k \frac{e^{-x^2/2}}{\sqrt{2\pi n}}
    \approx a_n \int_{-\infty}^\infty \frac{e^{-x^2/2}}
    {\sqrt{2\pi}} \dd x
    = a_n.
\end{align*}

On the other hand, by Cauchy's integral representation, we also have
\begin{align*}
    a_n
    &= \frac{n!}{2\pi i} \oint_{|z|=n}
    z^{-n-1} e^z\tilde{f}(z) \dd z \\
    &\approx \tilde{f}(n)
    \frac{n!}{2\pi i} \oint_{|z|=n} z^{-n-1} e^z \dd z\\
    &= \tilde{f}(n),
\end{align*}
since the saddle-point $z=n$ of the factor $z^{-n}e^z$ is unaltered
by the comparatively more smooth function $\tilde{f}(z)$.

\paragraph{The Poisson-Charlier expansion.}
The latter analytic viewpoint provides an additional advantage of
obtaining an expansion by using the Taylor expansion of $\tilde{f}$
at $z=n$, yielding
\begin{align}\label{PC}
    a_n
    =\sum_{j\ge0} \frac{\tilde{f}^{(j)}(n)}{j!}\tau_j(n),
\end{align}
where
\begin{align*}
    \tau_j(n)
    := n![z^n] (z-n)^j e^z
    = \sum_{0\le \ell \le j}
    \binom{j}{\ell} (-1)^{j-\ell} \frac{n!n^{j-\ell}}{(n-\ell)!}
    \qquad(j=0,1,\dots),
\end{align*}
and $[z^n]\phi(z)$ denotes the coefficient of $z^n$ in the Taylor
expansion of $\phi(z)$. We call such an expansion \emph{the
Poisson-Charlier expansion} since the $\tau_j$'s are essentially the
Charlier polynomials $C_j(\lambda,n)$ defined by
\[
    C_j(\lambda,n)
    := \lambda^{-n}n![z^n](z-1)^je^{\lambda z},
\]
so that $\tau_j(n) = n^j C_j(n,n)$. For other terms used in the
literature, see \cite{flajolet2009a,hald2002a}.

The first few terms of $\tau_j(n)$ are given as follows.
\begin{center}
\begin{tabular}{|c|c|c|c|c|c|c|}\hline
$\tau_0(n)$ & $\tau_1(n)$ & $\tau_2(n)$ & $\tau_3(n)$ & $\tau_4(n)$
&$\tau_5(n)$ & $\tau_6(n)$ \\ \hline $1$ &  $0$ & $-n$  & $2n$ &
$3n(n-2)$ & $-4n(5n-6)$ & $-5n(3n^2-26n+24)$ \\ \hline
\end{tabular}
\end{center}
It is easily seen that $\tau_j(n)$ is a polynomial in $n$ of degree
$\lfloor j/2\rfloor$.

The meaning of such a Poisson-Charlier expansion becomes readily
clear by the following simple but extremely useful lemma.
\begin{lmm} \label{lm-PC}
Let $\tilde{f}(z) := e^{-z}\sum_{k\ge0} a_k z^k/k!$. If $\tilde{f}$
is an entire function, then the Poisson-Charlier expansion
(\ref{PC}) provides an identity for $a_n$.
\end{lmm}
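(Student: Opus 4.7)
The plan is to start from the definition. Multiplying by $e^z$ gives $f(z):=e^z\tilde f(z)=\sum_{k\ge0}a_kz^k/k!$, so that
$$a_n=n![z^n]f(z)=n![z^n]\bigl(e^z\tilde f(z)\bigr).$$
The strategy is then to expand $\tilde f$ in Taylor series not at the origin (which would simply unwind the definition) but at the point $z=n$, match the factor $e^z$ against the binomial factor $(z-n)^j$, and recognize the result as $\tau_j(n)$.

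Since $\tilde f$ is entire by hypothesis, the Taylor expansion
$$\tilde f(z)=\sum_{j\ge0}\frac{\tilde f^{(j)}(n)}{j!}(z-n)^j$$
converges absolutely and uniformly on every compact subset of $\mathbb{C}$. Substituting this into $a_n=n![z^n](e^z\tilde f(z))$ and interchanging the summation with the extraction of $[z^n]$ would yield
$$a_n=\sum_{j\ge0}\frac{\tilde f^{(j)}(n)}{j!}\cdot n![z^n]\bigl((z-n)^je^z\bigr)=\sum_{j\ge0}\frac{\tilde f^{(j)}(n)}{j!}\,\tau_j(n),$$
which is exactly the desired identity. Incidentally, expanding $(z-n)^j$ by the binomial theorem and using $[z^n]z^\ell e^z=1/(n-\ell)!$ for $0\le\ell\le n$ immediately reproduces the explicit formula for $\tau_j(n)$ stated in the excerpt.

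The only step that requires care is the interchange of summation and coefficient extraction, which is the potential obstacle. I would carry it out by writing $[z^n]\phi(z)=\frac{1}{2\pi i}\oint_{|z|=R}\phi(z)z^{-n-1}\dd z$ for a fixed circle (say $R=2n$), and invoking the fact that the Taylor series of $\tilde f$ at $n$ converges uniformly on that circle (because $\tilde f$ is entire and the circle is compact); combined with the boundedness of $e^z z^{-n-1}$ on the circle, Fubini / uniform convergence allows the sum to be pulled outside the integral. No growth or convergence conditions on $a_n$ enter — entirety of $\tilde f$ alone suffices, which is precisely the minimalist hypothesis asserted by the lemma.
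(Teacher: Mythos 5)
Your proof is correct and takes essentially the same route as the paper: both expand $\tilde f$ in its Taylor series about $z=n$ and then extract the $n$-th coefficient of $e^z\tilde f(z)$ term by term. The paper compresses the justification into the phrase ``the lemma follows by absolute convergence,'' whereas you make the interchange explicit via the Cauchy coefficient integral over a fixed circle and uniform convergence of the Taylor series there; that is the same argument, just spelled out more carefully.
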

\begin{proof}
Since $\tilde{f}$ is entire, we have
\[
    \sum_{n\ge0} \frac{a_n}{n!}z^n
    = e^z \tilde{f}(z)
    = e^z\sum_{j\ge0} \frac{\tilde{f}^{(j)}(n)}{j!}(z-n)^j,
\]
and the lemma follows by absolute convergence.
\end{proof}

Two specific examples are worthy of mention here as they speak
volume of the difference between identity and asymptotic
equivalence. Take first $a_n = (-1)^n$. Then the Poisson heuristic
fails since $(-1)^n \not\sim e^{-2n}$, but, by Lemma~\ref{lm-PC}, we
have the identity
\[
    (-1)^n
    = e^{-2n} \sum_{j\ge0} \frac{(-2)^j}{j!}\,\tau_j(n).
\]
See Figure~\ref{fig-cv} for a plot of the convergence of the series
to $(-1)^n$.
\begin{figure}[h!]
\begin{center}
\begin{tikzpicture}[xscale=.05,yscale=3]
\foreach \x/\xtext in { 20 , 40, ..., 100} \draw (\x,.5pt) -- (\x
,-.5pt) node[anchor=north] {$\xtext$}; \foreach \y/\ytext in
{0.2,0.4, 0.6,0.8,1.0} \draw (1,\y ) -- (-1,\y ) node[anchor=east]
{$\ytext$};
\draw[-,line width=.5pt] (-1,0) -- (110,0) node[right] {};%
\draw[-,line width=.5pt] (0,-.05) -- (0,1.2) node[above] {};%
\draw[line width=.7pt,smooth,color=blue] plot coordinates{(0,
0.0000)(1, 0.0000)(2, 0.0000)(3, 0.0000)(4, 0.0000) (5, 0.0000)(6,
0.0000)(7, 0.0000)(8, 0.0000)(9, 0.0000) (10, 0.0001)(11,
0.0001)(12, 0.0000)(13, 0.0003)(14, 0.0007) (15, 0.0005)(16,
0.0009)(17, 0.0029)(18, 0.0037)(19, 0.0010) (20, 0.0058)(21,
0.0136)(22, 0.0161)(23, 0.0074)(24, 0.0135) (25, 0.0398)(26,
0.0581)(27, 0.0544)(28, 0.0212)(29, 0.0379) (30, 0.1078)(31,
0.1669)(32, 0.1940)(33, 0.1749)(34, 0.1060) (35, 0.0061)(36,
0.1474)(37, 0.3013)(38, 0.4522)(39, 0.5885) (40, 0.7032)(41,
0.7942)(42, 0.8626)(43, 0.9115)(44, 0.9449) (45, 0.9669)(46,
0.9807)(47, 0.9891)(48, 0.9940)(49, 0.9968)}; \draw[line
width=.7pt,color=blue] plot coordinates{(49, 0.9968)
(50, 0.9984)(100, 1.0000)};%
\end{tikzpicture}
\begin{tikzpicture}[xscale=.05,yscale=3]
\foreach \x/\xtext in { 20 , 40, ..., 100} \draw (\x,.5pt) -- (\x
,-.5pt) node[anchor=north] {$\xtext$}; \foreach \y/\ytext in
{-1,-0.8, -0.6,-0.4,-0.2,0, 0.2} \draw (.5,\y ) -- (-.5,\y )
node[anchor=east] {$\ytext$};
\draw[-,line width=.5pt] (-1,0) -- (105,0) node[right] {};%
\draw[-,line width=.5pt] (0,-1.05) -- (0,.22) node[above] {};%
\draw[line width=.7pt,smooth,color=red] plot coordinates{(0,
0.0000)(1, 0.0000)(2, 0.0000)(3, 0.0000)(4, 0.0000)(5, 0.0000) (6,
0.0000)(7, 0.0000)(8, 0.0000)(9, 0.0000)(10, 0.0000)(11, 0.0000)
(12, 0.0000)(13, -0.0001)(14, -0.0002)(15, 0.0000)(16, 0.0004) (17,
0.0009)(18, 0.0007)(19, -0.0007)(20, -0.0030)(21, -0.0043) (22,
-0.0025)(23, 0.0037)(24, 0.0122)(25, 0.0175)(26, 0.0135) (27,
-0.0030)(28, -0.0285)(29, -0.0527)(30, -0.0620)(31, -0.0457) (32,
-0.0013)(33, 0.0629)(34, 0.1302)(35, 0.1803)(36, 0.1954) (37,
0.1652)(38, 0.0888)(39, -0.0258)(40, -0.1653)(41, -0.3146) (42,
-0.4599)(43, -0.5909)(44, -0.7018)(45, -0.7904)(46, -0.8578) (47,
-0.9067)(48, -0.9408)(49, -0.9636)(50, -0.9783)(51, -0.9874)};
\draw[line width=.7pt,smooth,color=red] plot coordinates{
(51, -0.9874)(100, -1)};%
\end{tikzpicture}
\end{center}
\caption{\emph{Convergence of $e^{-2n}\sum_{j\le k}
(-2)^j\tau_j(n)/j!$ to $(-1)^n$ for $n=10$ (left) and $n=11$ (right)
for increasing $k$.}} \label{fig-cv}
\end{figure}
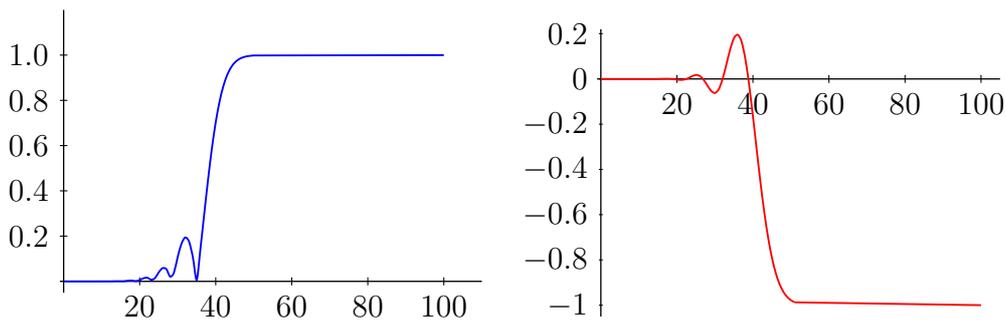

Now if $a_n = 2^n$, then $2^n \not\sim e^n$, but we still have
\[
    2^n
    = e^n \sum_{j\ge0} \frac{\tau_j(n)}{j!}.
\]

So when is the Poisson-Charlier expansion also an asymptotic
expansion for $a_n$, in the sense that dropping all terms with $j\ge
2\ell$ introduces an error of order $\tilde{f}^{(2\ell)}n^\ell$
(which in typical cases is of order $\tilde{f}(n) n^{-\ell}$)? Many
sufficient conditions are thoroughly discussed in \cite{jacquet98a},
although the terms in their expansions are expressed differently;
see also \cite{szpankowski2001a}.

\paragraph{Poissonized mean and variance.} The majority of random
variables analyzed in the algorithmic literature are at most of
polynomial or sub-exponential (such as $e^{c(\log n)^2}$ or $e^{c
n^{1/2}}$) orders, and are smooth enough. Thus the Poisson
generating functions of the moments are often entire functions. The
use of the Poisson-Charlier expansion is then straightforward, and
in many situations it remains to justify the asymptotic nature of
the expansion.

For convenience of discussion, let $\tilde{f}_m(z)$ denote the
Poisson generating function of the $m$-th moment of the random
variable in question, say $X_n$. Then by Lemma~\ref{lm-PC}, we have
the identity
\[
    \mathbb{E}(X_n)
    = \sum_{j\ge0}\frac{\tilde{f}_1^{(j)}(n)}{j!}\tau_j(n),
\]
and for the second moment
\begin{align}\label{sm-PC}
    \mathbb{E}(X_n^2)
    = \sum_{j\ge0}\frac{\tilde{f}_2^{(j)}(n)}{j!}\tau_j(n),
\end{align}
provided only that the two Poisson generating functions
$\tilde{f}_1$ and $\tilde{f}_2$ are entire functions.

These identities suggest that a good approximation to the variance
of $X_n$ be given by
\[
    \mathbb{V}(X_n)
    =  \mathbb{E}(X_n^2) - (\mathbb{E}(X_n))^2
    \approx \tilde{f}_2(n) - \tilde{f}_1(n)^2,
\]
which holds true for many cost measures, where we can indeed replace
the imprecise, approximately equal symbol ``$\approx$" by the more
precise, asymptotically equivalent symbol ``$\sim$". However, for a
large class of problems for which the variance is essentially
linear, meaning roughly that
\begin{align} \label{ess-linear}
    \lim_{n\to\infty} \frac{\log \mathbb{V}(X_n)}{\log n}
    = 1,
\end{align}
the Poissonized variance $\tilde{f}_2(n) - \tilde{f}_1(n)^2$ is not
asymptotically equivalent to the variance. This is the case for the
total cost of constructing random digital search trees, for example.
One technical reason is that there are additional cancellations
produced by dominant terms. The next question is then: can we find a
better normalized function so that the variance is asymptotically
equivalent to its value at $n$?

\paragraph{Poissonized variance with correction.} The crucial step of
our approach that is needed when the variance is essentially linear
is to consider
\begin{align}\label{Vz}
    \tilde{V}(z)
    :=  \tilde{f}_2(z) - \tilde{f}_1(z)^2 - z\tilde{f}_1'(z)^2,
\end{align}
and it then turns out that
\[
    \mathbb{V}(X_n)
    = \tilde{V}(n) + O((\log n)^c),
\]
in all cases we consider for some $c\ge0$. The asymptotics of the
variance is then reduced to that of $\tilde{V}(z)$ for large $z$,
which satisfies, up to non-homogeneous terms, the same type of
equation as $\tilde{f}_1(z)$. Thus the same tools used for analyzing
the mean can be applied to $\tilde{V}(z)$.

To see how the last correction term $z\tilde{f}_1'(z)^2$ appears, we
write $\tilde{D}(z) :=\tilde{f}_2(z)- \tilde{f}_1(z)^2$, so that
$\tilde{f}_2(z)=\tilde{D}(z) + \tilde{f}_1(z)^2$, and we obtain, by
substituting this into (\ref{sm-PC}),
\begin{align*}
    \mathbb{V}(X_n)
    &= \mathbb{E}(X_n^2) - (\mathbb{E}(X_n))^2  \\
    &=\sum_{j\ge0} \frac{\tilde{f_2}^{(j)}(n)}{j!}
    \tau_j(n)- \left(\sum_{j\ge0} \frac{\tilde{f_1}^{(j)}(n)}{j!}
    \tau_j(n)\right)^2\\
    &= \tilde{D}(n) - n\tilde{f}_1'(n)^2
    -\frac{n}2\tilde{D}''(n)+ \text{smaller-order terms}.
\end{align*}

Now take $\tilde{f}_1(n) \asymp n\log n$. Then the first term
following $\tilde{D}(n)$ is generally not smaller than
$\tilde{D}(n)$ because
\[
    n\tilde{f}_1'(n)^2
    \asymp n(\log n)^2,
\]
while $\tilde{D}(n)\asymp n(\log n)^2$, at least for the examples we
discuss in this paper. Note that the variance is in such a case
either of order $n\log n$ or of order $n$. Thus to get an
asymptotically equivalent approximation to the variance, we need at
least an additional correction term, which is exactly
$n\tilde{f}_1'(n)^2$.

The correction term $n\tilde{f}_1'(n)^2$ already appeared in
many early papers by Jacquet and R\'egnier (see \cite{jacquet88a}).

\paragraph{A viewpoint from the asymptotics of the characteristic
function.} Most binomial recurrences of the form
\begin{align}\label{bino-rr}
    X_n
    \stackrel{d}{=} X_{B_n} + X_{\bar{B}_n}^* + T_n,
\end{align}
as arising from the binomial splitting processes discussed above are
asymptotically normally distributed, a property partly ascribable to
the highly regular behavior of the binomial distribution. Here the
$(X_n^*)$ are independent copies of the $(X_n)$ and the random or
deterministic non-homogeneous part $T_n$ is often called the
``toll-function," measuring the cost used to ``conquer" the two
subproblems. Such recurrences have been extensively studied in
numerous papers; see \cite{jacquet98a,neininger04a,schachinger95b,
schachinger01a} and the references therein.

The correction term we introduced in (\ref{Vz}) for Poissonized
variance also appears naturally in the following heuristic, formal
analysis, which can be justified when more properties are available.
By definition and formal expansion
\begin{align*}
    e^{-z} \sum_{n\ge0} \mathbb{E}\left(e^{X_n i\theta}\right)
    \frac{z^n}{n!}
    &= \sum_{m\ge0} \frac{\tilde{f}_m(z)}{m!} (i\theta)^m \\
    &= \exp\left(\tilde{f}_1(z)i\theta - \frac{\tilde{D}(z)}{2}
    \theta^2 + \cdots\right),
\end{align*}
where $\tilde{D}(z) :=\tilde{f}_2(z)- \tilde{f}_1(z)^2$, we have
\begin{align*}
    \mathbb{E}\left(e^{(X_n-\tilde{f}_1(n))i\theta}\right)
    \approx \frac{n!}{2\pi i} \oint_{|z|=n} z^{-n-1} \exp\left(z+
    \left(\tilde{f}_1(z)-\tilde{f}_1(n)\right)i\theta
    - \frac{\tilde{D}(z)}{2} \theta^2 + \cdots\right) \dd z.
\end{align*}
Observe that with $z=ne^{it}$, we have the local expansion
\[
    ne^{it}-nit+\left(\tilde{f}_1(ne^{it})
    -\tilde{f}_1(n)\right)i\theta
    - \frac{\tilde{D}(ne^{it})}{2} \theta^2
    = n-\frac{nt^2}{2} - n\tilde{f}_1'(n) t \theta
    - \frac{\tilde{D}(n)}{2} \theta^2 + \cdots,
\]
for small $t$. It follows that
\begin{align*}
    \mathbb{E}\left(e^{(X_n-\tilde{f}_1(n))i\theta}\right)
    &\approx \frac{n!n^{-n}e^n}{2\pi} \exp\left(-
    \frac{\tilde{D}(n)}{2} \theta^2 \right)\int_{-\ve}^{\ve}
    \exp\left(-\frac{nt^2}{2} - n\tilde{f}_1'(n) t \theta \right)
    \dd t\\
    &\sim \exp\left(- \frac{\theta^2}{2}\left(\tilde{D}(n)
    -n\tilde{f}_1'(n)^2\right)\right),
\end{align*}
by extending the integral to $\pm\infty$ and by completing the
square. This again shows that $n\tilde{f}_1'(n)^2$ is the right
correction term for the variance. For more precise analysis of
this type, see \cite{jacquet98a}.

\paragraph{A comparison of different approaches to the asymptotic
variance.} What are the advantages of the Poissonized variance with
correction? In the literature, a few different approaches have been
adopted for computing the asymptotics of the variance of the
binomial splitting processes.
\begin{itemize}
\item Second moment approach: this is the most straightforward means
and consists of first deriving asymptotic expansions of sufficient
length for the expected value and for the second moment, then
considering the difference $\mathbb{E}(X_n^2) -
(\mathbb{E}(X_n))^2$, and identifying the lead terms after
cancellations of dominant terms in both expansions. This approach is
often computationally heavy as many terms have to be cancelled;
additional complication arises from fluctuating terms, rendering the
resulting expressions more messy. See below for more references.

\item Poissonized variance: the asymptotics of the variance is
carried out through that of $\tilde{D}(n) = \tilde{f}_2(n) -
\tilde{f}_1(n)^2$. The difference between this approach and the
previous one is that no asymptotics of $\tilde{f}_2(n)$ is derived
or needed, and one always focuses directly on considering the
equation (functional or differential) satisfied by $\tilde{D}(z)$.
As we discussed above, this does not give in many cases an
asymptotically equivalent estimate for the variance, because
additional cancellations have to be further taken into account; see
for instance \cite{jacquet88a,jacquet95a,jacquet98a}.

\item Characteristic function approach: similar to the formal
calculations we carried out above, this approach tries to derive a
more precise asymptotic approximation to the characteristic function
using, say complex-analytic tools, and then to identify the right
normalizing term as the variance; see the survey \cite{jacquet98a}
and the papers cited there.

\item Schachinger's differencing approach: a delicate, mostly
elementary approach based on the recurrence satisfied by the
variance was proposed in \cite{schachinger95b} (see also
\cite{schachinger01a}). His approach is applicable to very general
``toll-functions" $T_n$ in (\ref{bino-rr}) but at the price of less
precise expressions.

\end{itemize}
The approach we use is similar to the Poissonized variance one but
the difference is that the passage through $\tilde{D}(z)$ is
completely avoided and we focus directly on equations satisfied
by $\tilde{V}(z)$ (defined in (\ref{Vz})).

In contrast to Schachinger's approach, our approach, after starting
from defining $\tilde{V}(z)$, is mostly analytic. It yields then
more precise expansions, but more properties of $T_n$ have to be
known. The contrast here between elementary and analytic approaches
is thus typical; see, for example, \cite{chern07a,chern02a}. See
also Appendix for a brief sketch of the asymptotic linearity of the
variance by elementary arguments.

Additional advantages that our approach offer include comparatively
simpler forms for the resulting expressions, including Fourier
series expansions, and general applicability (coupling with the
introduction of several new techniques).

\paragraph{Organization of this paper.}
This paper is organized as follows. We start with the variance of
the total path-length of random digital search trees in the next
section, which was our motivating example. We then extend the
consideration to bucket DSTs for which two different notions of
total path-length are distinguished, which result in very different
asymptotic behaviors. The application of our approach to several
other shape parameters are discussed in Section~\ref{DST-II}.
Table~\ref{tb-all-pl} summarizes the diverse behaviors exhibited
by the means and the variances of the shape parameters we consider
in this paper.

\begin{table}[!h]
\begin{center}
\begin{tabular}{|c|c|c|} \hline
Shape parameters & mean & variance \\ \hline\hline
Internal PL & $n\log n$ & $n$ \\ \hline
Key-wise PL$^*$ & $n\log n$ & $n$ \\ \hline
Node-wise PL$^*$ & $n\log n$ & $n(\log n)^2$ \\ \hline
Peripheral PL & $n$ & $n$ \\ \hline
$\#$(leaves) & $n$ & $n$ \\ \hline
Differential PL & $n$ & $n\log n$ \\ \hline
Weighted PL & $n(\log n)^{m+1}$ & $n$ \\ \hline
\end{tabular}
\end{center}
\caption{\emph{Orders of the means and the variances of all shape
parameters in this paper; those marked with an $^*$ are for $b$-DSTs
with $b\ge2$. Here PL denotes path-length and $m\ge0$.}}
\label{tb-all-pl}
\end{table}

Applications of the approach we develop here to other classes of
trees and structures, including tries, Patricia tries, bucket sort,
contention resolution algorithms, etc., will be investigated in a
future paper.

\section{Digital Search Trees}\label{dst}

We start in this section with a brief description of digital search
trees (DSTs), list major shape parameters studied in the literature,
and then focus on the total path-length. The approach we develop is
also very useful for other linear shape measures, which is
discussed in a more systematic form in the following sections.

\subsection{DSTs}
DSTs were first introduced by Coffman and Eve in \cite{coffman70a}
in the early 1970's under the name of sequence hash trees. They can
be regarded as the bit-version of binary search trees (thus the
name); see \cite[p.\ 496 \emph{et seq.}]{knuth1998a}. Given a
sequence of binary strings, we place the first in the root node;
those starting with ``$0$" (``$1$") are directed to the left (right)
subtree of the root, and are constructed recursively by the same
procedure but with the removal of their first bits when comparisons
are made. See Figure~\ref{fg-dst} for an illustration.

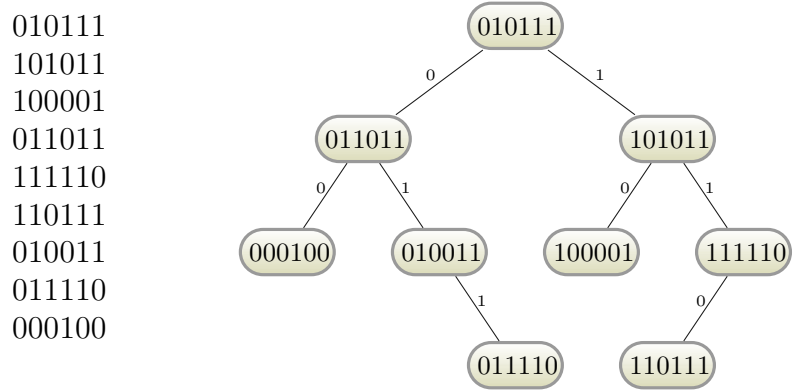
\begin{figure}[h!]
\begin{center}
\begin{tikzpicture}[
    s1/.style={
    rectangle,
    rounded corners=3mm,
    minimum size=6mm,
    very thick,
    draw=white!50!black!80,
    top color=white,
    bottom color=yellow!50!black!30
}]
\path node at (1,0) {$010111$};%
\path node at (1,-.5) {$101011$};%
\path node at (1,-1) {$100001$};%
\path node at (1,-1.5) {$011011$};%
\path node at (1,-2) {$111110$};%
\path node at (1,-2.5) {$110111$};%
\path node at (1,-3) {$010011$};%
\path node at (1,-3.5) {$011110$};%
\path node at (1,-4) {$000100$};%
\footnotesize %
\path node(a) at (7,0) [s1, text width=1cm, text centered]{$010111$};%
\path node(b) at (9,-1.5) [s1, text width=1cm, text centered]{$101011$};%
\path node(c) at (8,-3) [s1, text width=1cm, text centered]{$100001$};%
\path node(d) at (5,-1.5) [s1, text width=1cm, text centered]{$011011$};%
\path node(e) at (10,-3) [s1, text width=1cm, text centered]{$111110$};%
\path node(f) at (9,-4.5) [s1, text width=1cm, text centered]{$110111$};%
\path node(g) at (6,-3) [s1, text width=1cm, text centered]{$010011$};%
\path node(h) at (7,-4.5) [s1, text width=1cm, text centered]{$011110$};%
\path node(i) at (4,-3) [s1, text width=1cm, text centered]{$000100$};%
\path [draw,-,black!90] (a) -- (b) node[above,pos=.6,black]{{\tiny $1$}};%
\path [draw,-,black!90] (b) -- (c) node[above,pos=.6,black]{{\tiny $0$}};%
\path [draw,-,black!90] (a) -- (d) node[above,pos=.6,black]{{\tiny $0$}};%
\path [draw,-,black!90] (b) -- (e) node[above,pos=.6,black]{{\tiny $1$}};%
\path [draw,-,black!90] (e) -- (f) node[above,pos=.6,black]{{\tiny $0$}};%
\path [draw,-,black!90] (d) -- (g) node[above,pos=.6,black]{{\tiny $1$}};%
\path [draw,-,black!90] (g) -- (h) node[above,pos=.6,black]{{\tiny $1$}};%
\path [draw,-,black!90] (d) -- (i) node[above,pos=.6,black]{{\tiny $0$}};
\end{tikzpicture}
\end{center}
\caption{\emph{A digital search tree of nine binary strings.}}
\label{fg-dst}
\end{figure}

While the practical usefulness of digital search trees is limited,
they represent one of the simplest, fundamental, prototype models
for divide-and-conquer algorithms using coin-tossing or similar
random devices. Of notable interest is its close connection to the
analysis of Lempel-Ziv compression scheme that has found
widespread incorporation into numerous softwares. Furthermore, the
mathematical analysis is often challenging and leads to intriguing
phenomena. Also the splitting mechanism of DSTs appeared naturally
in a few problems in other areas; some of these are mentioned in
the last section.

\paragraph{Random digital search trees.} The simplest random model
we discuss in this paper is the independent, Bernoulli model. In
this model, we are given a sequence of $n$ independent and
identically distributed random variables, each comprising an
infinity sequence of Bernoulli random variables with mean $p$,
$0<p<1$. The DST constructed from the given random sequence of
binary strings is called a \emph{random DST}. If $p=1/2$, the DST is
said to be \emph{symmetric}; otherwise, it is \emph{asymmetric}. We
focus on symmetric DSTs in this paper for simplicity; extension to
asymmetric DSTs is possible but much harder.

Stochastic properties of many shape characteristics of random DSTs
are known. Almost all of them fall into one of the two categories,
according to their growth order being logarithmic or essentially
linear (in the sense of (\ref{ess-linear})), which we simply refer
to as ``log shape measures" and ``linear shape measures".

\paragraph{Log shape measures.}
The two major parameters studied in this category are \emph{depth},
which is the distance of the root to a randomly chosen node in the
tree (each with the same probability), and \emph{height}, which
counts the number of nodes from the root to one of the longest
paths. Both are of logarithmic order in mean. Depth provides a good
indication of the typical cost needed when inserting a new key in
the tree, while height measures the worst possible cost that may be
needed.

Depth was first studied in \cite{konheim73a} in connection with the
\emph{profile}, which is the sequence of numbers, each enumerating
the number of nodes with the same distance to the root. For example,
the tree \tikz[level distance=10pt,sibling distance=2pt, every
node/.style={fill=black,circle,inner sep=.5pt},grow=0] \node[circle]
{}
  child { node{}
      child { node{} child{ node{} }}
      child { node{}
          child { node{}
              child { node{} }
              child { node{} }
              child { node{} }
          }
      }
  }
  child { node{}
      child[missing]
      child{ node{}}
}; has the profile $\{1,2,3,2,3\}$. For other papers on the depth of
random DSTs, see
\cite{dennert07a,devroye92a,devroye99a,jacquet01a,janson06a,
kirschenhofer88d,knuth1998a,louchard87a,louchard94a,mahmoud1992a,
pittel86a,szpankowski88b,szpankowski91a}. The height of random
DSTs is addressed in \cite{devroye99a,drmota02a,knessl00b,
mahmoud1992a,pittel86a}.

\paragraph{Linear shape measures.} These include the total internal
path-length, which sums the distance between the root and every
node, and the occurrences of a given pattern (leaves or nodes
satisfying certain properties); see
\cite{flajolet92a,flajolet86d,hubalek00a,hubalek02a,jacquet95a,
kirschenhofer88c,kirschenhofer94a,knuth1998a}.

The profile contains generally much more information than most other
shape measures, and it can to some extent be regarded as a good
bridge connecting log and linear measures; see
\cite{drmota2009a,drmota09b,konheim73a,louchard87a} for known
properties concerning expected profile of random DSTs.

Nodes of random DSTs with $p=1/2$ are distributed in an extremely
regular way, as shown in Figures~\ref{fg-rnd-dst} and
\ref{fg-rnd-dst2}.

\subsection{Known and new results for the total internal path-length}

Throughout this section, we focus on $X_n$, the total path length of
a random digital search tree built from $n$ binary strings. By
definition and by our random assumption, $X_n$ can be computed
recursively by
\begin{equation}\label{rec-xn-dst}
    X_{n+1}
    \stackrel{d}{=}X_{B_n}+X_{n-B_n}^{*}+n,\qquad (n\ge 0)
\end{equation}
with the initial condition $X_0=0$, since removing the root results
in a decrease of $n$ for the total path length (each internal node
below the root contributes $1$). Here
$B_n\sim\text{Binomial}(n;1/2), X_n\stackrel{d}{=}X_n^*$, and
$X_n,X_n^{*},B_n$ are independent.

\include{DST_500_1100_colored}
\include{DST-1000-Circle-Spiral}

\paragraph{Known results.}
It is known that (see \cite{flajolet86d,hubalek00a,prodinger92b})
\begin{align}\label{mean-asymp}
\begin{split}
    \mathbb{E}(X_n)
    &= (n+1)\log_2n + n\left(
    \frac{\gamma-1}{\log 2} + \frac12-c_1
    +\varpi_1(\log_2n)\right) \\
    &\qquad +\frac{\gamma-1/2}{ \log 2} + \frac52-c_1
    + \varpi_2(\log_2n) +O\left(n^{-1}\log n\right),
\end{split}
\end{align}
where $\gamma$ denotes Euler's constant, $c_1 :=
\sum_{k\ge1} (2^k-1)^{-1}$, and $\varpi_1(t), \varpi_2(t)$ are
$1$-periodic functions with zero mean whose Fourier expansions are
given by ($\chi_k := 2k\pi i/L$, $L := \log 2$)
\begin{align}
    \varpi_1(t)
    &= \frac1{L}\sum_{k\not=0} \Gamma\left(-1-\chi_k\right)
    e^{2k\pi i t}, \label{F1u}\\
    \varpi_2(t)
    &= -\frac1{L} \sum_{k\not=0}\left(1-\frac{\chi_k}2\right)
    \Gamma(-\chi_k) e^{2k\pi i t},\nonumber
\end{align}
respectively. Here $\Gamma$ denotes the Gamma function. Thus we see
roughly that \emph{random digital search trees under the unbiased
Bernoulli model are highly balanced in shape}. An important feature
of the periodic functions is that they are marked by very small
amplitudes of fluctuation: $|\varpi_1(t)|\le 3.4\times 10^{-8}$ and
$|\varpi_2(t)|\le 3.4\times 10^{-6}$. Such a quasi-flat (or smooth)
behavior may in practice be very likely to lead to wrong conclusions
as they are hardly visible from simulations of moderate sample sizes.

\begin{figure}[!h]
\begin{center}
\begin{tikzpicture}[xscale=1,yscale=15]
\draw[color=red, line width=1.5pt] plot
coordinates{(1.000000,0)(1.584963,0.083333)(2,0.121094)
(2,0.121094)(3,0.182472)(3.584963,0.207897)(4,0.221225)
(4.584963,0.235443)(5,0.24275)(5.321928,0.247253)
(5.584963,0.250313)(5.807355,0.2525)(6,0.254142)
(6.169925,0.25543)(6.321928,0.256473)(6.459432,0.257334)
(6.584963,0.258052)(6.70044,0.258658)(6.807355,0.259175)
(6.906891,0.259621)(7,0.260011)(7.087463,0.260357)
(7.169925,0.260666)(7.247928,0.260945)(7.321928,0.261197)
(7.392317,0.261427)(7.459432,0.261636)(7.523562,0.261827)
(7.584963,0.262002)(7.643856,0.262163)(7.70044,0.26231)
(7.754888,0.262446)(7.807355,0.262571)(7.857981,0.262687)
(7.906891,0.262795)(7.954196,0.262896)(8,0.262991)
(8.129283,0.263244)(8.247928,0.26346)(8.357552,0.263648)
(8.459432,0.263812)(8.554589,0.263955)(8.643856,0.26408)
(8.72792,0.264189)(8.807355,0.264284)(8.882643,0.264369)
(8.954196,0.264446)(9.022368,0.264515)(9.087463,0.264578)
(9.149747,0.264637)(9.209453,0.264692)(9.266787,0.264743)
(9.321928,0.264791)(9.375039,0.264837)(9.426265,0.264879)
(9.475733,0.264919)(9.523562,0.264956)(9.569856,0.26499)
(9.61471,0.265023)(9.658211,0.265052)(9.70044,0.26508)
(9.741467,0.265106)(9.78136,0.26513)(9.820179,0.265152)
(9.857981,0.265173)(9.894818,0.265193)(9.930737,0.265212)
(9.965784,0.265229)(10,0.265246)};
\foreach \x/\xtext in {2,3,...,9}
 \draw[shift={(\x,0)}, line width=.5pt]
 (0,-.005) -- (0,.005) node[below=2pt] {\tiny$\xtext$};%
\foreach \x in {1.5,2.5,...,9.5}
 \draw[shift={(\x,0)}, line width=.5pt]
 (0,-.005) -- (0,.005) node[below] {};%
\foreach \y/\ytext in {0.1,0.2}
 \draw[shift={(1,\y)}, line width=.5pt]
 (.1,0) -- (-.1,0) node[left]{\tiny$\ytext$};%
\foreach \y in {0.05,0.15,0.25}
 \draw[shift={(1,\y)}, line width=.5pt]
 (.1,0) -- (-.1,0) node[left]{};%
\foreach \y in {0.2,0.1}
 \draw[shift={(10,\y)}, line width=.5pt]
 (.1,0) -- (-.1,0) ;%
\foreach \y in {0.25,0.15,0.05}
 \draw[shift={(10,\y)}, line width=.5pt]
 (.1,0) -- (-.1,0) node[left]{};%
\foreach \y/\ytext in {0.25/-0.1,0.15/-0.2,0.05/-0.3}
 \draw[shift={(10,\y)}, line width=.5pt]
     node[right]{\tiny$\ytext$};%
\draw (0.8,-0.01) node{\tiny$1$};%
\draw (9.8,-0.01) node{\tiny$10$};%
\draw (8,0.24) node{$\mathbb{V}(X_n)/n$};%
\draw (8,0.03) node{$\mathbb{E}(X_n)/(n+1)-\log_2n$};%
\draw[-latex, line width=.5pt] (0.5,0) -- (10.5,0) node[right] {};%
\draw[-latex, line width=.5pt] (1,-.02) -- (1,0.3) node[above] {};%
\draw[-latex, line width=.5pt] (10,-.02) -- (10,0.3) node[above] {};%
\draw[line width=.5pt] (9.9,0.0072024) -- (10.1,0.0072024)
node[right]{} ;%
\draw[color=blue!100, line width=1.5pt] plot
coordinates{(1,0.2166666)(1.584963,0.1580074)(2.807355,0.0782712)
(3.459432,0.0535818)(3.906891,0.0415548)(4.247928,0.0344368)
(4.523562,0.029731)(4.754888,0.0263892)(4.954196,0.0238934)
(5.285402,0.0204146)(5.554589,0.0181048)(5.78136,0.0164598)
(5.97728,0.0152286)(6.149747,0.0142724)(6.321928,0.0134238)
(6.459432,0.012814)(6.584963,0.0123054)(6.70044,0.0118744)
(6.807355,0.0115046)(6.906891,0.011184)(7,0.0109034)
(7.044394,0.0107758)(7.129283,0.0105422)(7.209453,0.0103338)
(7.285402,0.0101468)(7.357552,0.0099778)(7.400879,0.0098804)
(7.459432,0.0097532)(7.523562,0.0096196)(7.584963,0.009497)
(7.643856,0.0093844)(7.70044,0.0092802)(7.754888,0.009184)
(7.807355,0.0090944)(7.857981,0.009011)(7.906891,0.0089332)
(7.954196,0.0088604)(8,0.0087922)(8.129283,0.0086108)
(8.247928,0.008458)(8.357552,0.0083276)(8.459432,0.0082148)
(8.554589,0.0081164)(8.643856,0.0080298)(8.72792,0.007953)
(8.807355,0.0078844)(8.882643,0.0078228)(8.954196,0.0077672)
(9.022368,0.0077166)(9.087463,0.0076706)(9.149747,0.0076284)
(9.209453,0.0075896)(9.266787,0.007554)(9.321928,0.0075208)
(9.375039,0.0074902)(9.426265,0.0074616)(9.475733,0.007435)
(9.523562,0.0074102)(9.569856,0.0073868)(9.61471,0.0073648)
(9.658211,0.0073442)(9.70044,0.0073248)(9.741467,0.0073066)
(9.78136,0.0072892)(9.820179,0.0072728)(9.857981,0.0072572)
(9.894818,0.0072426)(9.930737,0.0072284)(9.965784,0.0072152)
(10,0.0072024)};
\end{tikzpicture}\end{center}
\caption{\emph{A plot of $\mathbb{E}(X_n)/(n+1) - \log_2n$ in
log-scale (the decreasing curve using the $y$-axis on the right-hand
side), and that of $\mathbb{V}(X_n)/n$ in log-scale (the increasing
curve using the $y$-axis on the left-hand side).}}
\end{figure}
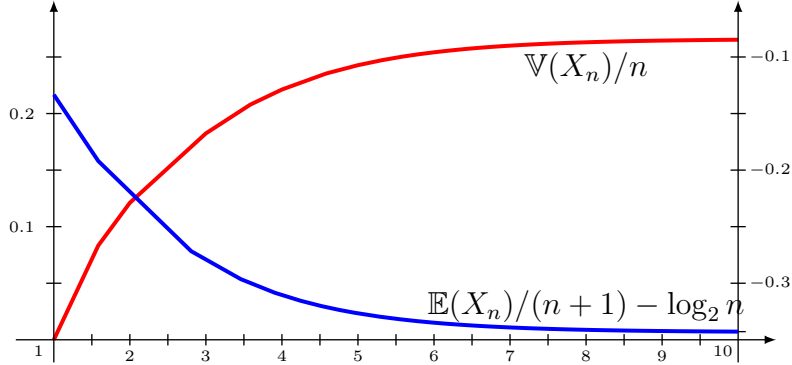

Let
\begin{align}\label{Q}
    Q_k
    := \prod_{1\le j\le k} \left(1-\frac1{2^j}\right),
    \quad \text{and}\quad
    Q(z)
    := \prod_{j\ge1} \left(1-\frac{z}{2^j}\right).
\end{align}
In particular, $Q(1)=Q_\infty$.
The variance was computed in \cite{kirschenhofer94a} by a direct
second-moment approach and the result is
\begin{align*}
    \mathbb{V}(X_n)
    = n(C_{\textit{kps}}+\varpi_{\textit{kps}}(\log_2n))
    + O(\log^2n),
\end{align*}
where $\varpi_{\textit{kps}}(t)$ is again a $1$-periodic, zero-mean
function and the mean value $C_{\textit{kps}}$ is given by ($L :=
\log 2$)
\[
\begin{split}
    C_{\textit{kps}}
    &=-\frac{28}{3L}-\frac{39}{4}
    +\frac{\pi^2}{2L^2}+\frac{2}{L^2}
    -\frac{2Q_\infty}{L}-2\sum_{\ell\ge 1}
    \frac{\ell2^\ell}{(2^\ell-1)^2}+\frac{2}{L}\sum_{\ell\ge
    1}\frac{1}{2^\ell-1}\\
    &-\frac{2}{L}\sum_{\ell\ge 3}\frac{(-1)^{\ell+1}(\ell-5)}
    {(\ell+1)\ell(\ell-1)(2^\ell-1)}\\
    &+\frac{2}{L}\sum_{\ell\ge1}(-1)^\ell2^{-\binom{\ell+1}{2}}
    \left(\frac{L(1-2^{-\ell+1})/2-1}{1-2^{-\ell}}
    -\sum_{r\ge 2}\frac{(-1)^{r+1}}{r(r-1)(2^{r+\ell}-1)}\right)\\
    &+\sum_{\ell\ge3}\sum_{2\le r<\ell}\binom{\ell+1}{r}
    \frac{Q_{r-2}Q_{\ell-r-1}}{2^{\ell}Q_\ell}\sum_{j\ge \ell+1}
    \frac{1}{2^{j}-1}-2\left[\varpi_1^{[1]}\varpi_2^{[2]}\right]_0-
    \left[(\varpi_1^{[1]})^2\right]_0\\
    &+2\sum_{\ell\ge 2}\frac{1}{2^\ell Q_\ell}
    \sum_{r\ge 0}\frac{(-1)^r2^{-\binom{r+1}{2}}}
    {Q_r}Q_{r+\ell-2}\times\\
    &\qquad\times\Biggl\{-\sum_{j\ge1}\frac{1}{2^{j+r+\ell+2}-1}
    \left(2^\ell-\ell-2+\sum_{2\le i<\ell}
    \binom{\ell+1}{i}\frac{1}{2^{r+i-1}-1}\right)\\
    &\quad\quad\quad+\frac{1}{(1-2^{-\ell-r})^2}
    +\frac{\ell+1}{(1-2^{1-\ell-r})^2}
    -\frac{1}{L(1-2^{1-\ell-r})}\\
    &\quad\quad\quad-\sum_{2\le j\le \ell+1}
    \binom{\ell+1}{j}\frac{1}{2^{r+j-1}-1}
    +\frac{1}{L}\sum_{1\le j\le \ell+1}
    \binom{\ell+1}{j}\frac{1}{2^{r+j}-1}\\
    &\quad\quad\quad+\frac{1}{L}
    \sum_{0\le j\le \ell+1}\binom{\ell+1}{j}
    \sum_{i\ge 1}\frac{(-1)^i}{(i+1)(2^{r+j+i}-1)}\Biggr\}.
\end{split}
\]
Here $[\varpi_1\varpi_2]_0$ denotes the mean value of the function
$\varpi_1(t)\varpi_0(t)$ over the unit interval. The long expression
obviously shows the complexity of the asymptotic problem.

We show that this long expression can be largely simplified. Before
stating our result, we mention that the asymptotic normality of
$X_n$ (in the sense of convergence in distribution) was first proved
in \cite{jacquet95a} by a complex-analytic approach; for other
approaches, see \cite{schachinger01a} (martingale difference),
\cite{hubalek02a} (method of moments), \cite{neininger04a}
(contraction method).

\paragraph{A new asymptotic approximation to $\mathbb{V}(X_n)$.}
Define
\begin{align}\label{G2w}
    G_2(\omega)
    =Q_\infty\sum_{j,h,\ell\ge 0}
    \frac{(-1)^j2^{-\binom{j+1}{2}+j(\omega-2)}}
    {Q_jQ_hQ_\ell2^{h+\ell}}
    \varphi(\omega;2^{-j-h}+2^{-j-\ell}) ,
\end{align}
where for $0<\Re(\omega)<3$ and $x>0$
\begin{align*}
    \varphi(\omega;x)
    := \int_{0}^{\infty}\frac{s^{\omega-1}}{(s+1)(s+x)^2}\dd{s},
\end{align*}
which, by the relation
\[
    \int_0^\infty \frac{s^{\omega-1}}{s+1}\dd s
    = \frac{\pi}{\sin(\pi \omega)}
    = \Gamma(\omega)\Gamma(1-\omega)\qquad(0<\Re(\omega)<1),
\]
can be represented as
\begin{align*}
    \varphi(\omega;x)
    &= \begin{cases}
        \displaystyle
        \frac{\pi\left(1+x^{\omega-2}((\omega-2)\xi
        +1-\omega\right)}{(x-1)^2\sin(\pi \omega)}
        ,&\text{if}\ x\ne1;\\
        \displaystyle
        \frac{\pi(\omega-1)(\omega-2)}{2\sin(\pi\omega)}
        ,&\text{if}\ x=1.
    \end{cases}
\end{align*}
The last expression provides indeed a meromorphic continuation of
$\varphi(\omega;x)$ into the whole complex $\omega$-plane whenever
$x>0$. In particular,
\[
    \varphi(2;x)
    :=\begin{cases}
        \displaystyle \frac{x-\log x-1}{(x-1)^2},
        &\text{if}\ x\ne1;\\
        \displaystyle \frac12,&\text{if}\ x=1.
    \end{cases}
\]

\begin{thm} \label{thm-KPS}
The variance of the total path-length of random DSTs of $n$ nodes
satisfies
\begin{align}\label{VXn-KPS}
    \mathbb{V}(X_n)
    = n(C_{\textit{kps}}+\varpi_{\textit{kps}}(\log_2n)) + O(1),
\end{align}
where
\[
    C_{\textit{kps}}
    = \frac{G_2(2)}{\log 2}
    =\frac{Q_\infty}{\log 2}\sum_{j,h,\ell\ge 0}
    \frac{(-1)^j2^{-\binom{j+1}{2}}}{Q_jQ_hQ_\ell 2^{h+\ell}}\,
    \varphi(2;2^{-j-h}+2^{-j-\ell}),
\]
and $\varpi_{\textit{kps}}$ has the Fourier series expansion
\[
    \varpi_{\textit{kps}}(t)
    = \frac1{\log2}\sum_{k\in\mathbb{Z}\setminus \{0\}}
    \frac{G_2(2+\chi_k)}{\Gamma(2+\chi_k)}\,e^{2k\pi i t},
\]
which is absolutely convergent.
\end{thm}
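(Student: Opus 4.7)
The plan is to apply the normalized Poissonized-variance framework of Section~1 with
$$\tilde{V}(z) := \tilde{f}_2(z) - \tilde{f}_1(z)^2 - z\tilde{f}_1'(z)^2,$$
where $\tilde{f}_m(z) := e^{-z}\sum_{n\geq 0} \mathbb{E}(X_n^m) z^n/n!$. The argument proceeds in four steps.

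\emph{Step 1 (Functional equations).} Translating (\ref{rec-xn-dst}) into Poisson generating functions in the standard way yields
$$\tilde{f}_1(z) + \tilde{f}_1'(z) = 2\tilde{f}_1(z/2) + z$$
(cf.\ \cite{flajolet86d,prodinger92b}), together with an analogous equation for $\tilde{f}_2$ whose right-hand side contains a cross-term $2\tilde{f}_1(z/2)^2$ produced by the independence of $X_{B_n}$ and $X_{n-B_n}^{*}$, plus the lower-order pieces $4z\tilde{f}_1(z/2) + z^2$. Combining the two equations with the definition of $\tilde{V}$ and using the correction $z\tilde{f}_1'(z)^2$ to absorb the quadratic-in-$\tilde{f}_1$ contributions then yields an equation of exactly the same shape,
$$\tilde{V}(z) + \tilde{V}'(z) = 2\tilde{V}(z/2) + \tilde{T}(z),$$
in which $\tilde{T}(z)$ is linear in $z$ (up to exponentially small tails). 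This is the structural simplification anticipated in Section~1: without the correction term, the inhomogeneity would be of order $z(\log z)^2$ and the Mellin analysis below would run through a double pole.

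\emph{Step 2 (De-Poissonization).} Applying Lemma~\ref{lm-PC} to both $\tilde{f}_1$ and $\tilde{f}_2$ produces exact Poisson-Charlier identities for $\mathbb{E}(X_n)$ and $\mathbb{E}(X_n^2)$; subtracting and using $\tau_0(n)=1, \tau_1(n)=0, \tau_2(n)=-n$ yields
$$\mathbb{V}(X_n) = \tilde{V}(n) + O((\log n)^c)$$
for some $c\geq 0$, once polynomial bounds on the higher Poisson derivatives of $\tilde{f}_1, \tilde{f}_2$ are established in a complex cone around the positive real axis. These bounds follow mechanically from the same saddle-point estimates underlying (\ref{mean-asymp}).

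\emph{Step 3 (Integral representation).} Setting $W(z) := e^z\tilde{V}(z)$ converts the functional equation into $W'(z) = 2e^{z/2}W(z/2) + e^z\tilde{T}(z)$. Iterating this first-order equation and dividing out $e^z$ expresses $\tilde{V}(z)$ as an explicit nested series whose summands involve the constants $Q_k$ defined in (\ref{Q}) and shifted copies $\tilde{T}(z/2^j)$. The three nested indices $(j,h,\ell)$ in the definition (\ref{G2w}) of $G_2(\omega)$ correspond to one iteration level for $\tilde{V}$ itself (index $j$, producing the factor $(-1)^j 2^{-\binom{j+1}{2}}/Q_j$) together with two iteration levels encoded inside $\tilde{T}$ via the factors $\tilde{f}_1(z/2^h)$ and $\tilde{f}_1(z/2^\ell)$ emerging from $z\tilde{f}_1'(z)^2$.

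\emph{Step 4 (Mellin extraction).} Taking the Mellin transform of this representation term by term, the integrals
$$\varphi(\omega; x) = \int_0^\infty \frac{s^{\omega-1}}{(s+1)(s+x)^2}\dd{s}$$
appear naturally: the factor $(s+1)^{-1}$ arises from the Laplace component (i.e., the $e^{-z}$ built into $\tilde{f}_1$), while $(s+x)^{-2}$ comes from the product of two shifted derivatives $\tilde{f}_1'(z/2^h)\tilde{f}_1'(z/2^\ell)$ evaluated at $x = 2^{-j-h}+2^{-j-\ell}$. Shifting the Mellin contour past the line $\Re(s)=-1$ collects the simple poles of $(1-2^{1+s})^{-1}$ at $s=-1+\chi_k$, and the residues assemble into the Fourier series $\varpi_{\textit{kps}}(\log_2 n)$ with coefficients $G_2(2+\chi_k)/\Gamma(2+\chi_k)$; the $k=0$ pole contributes $C_{\textit{kps}} = G_2(2)/\log 2$. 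Rapid decay of $1/\Gamma(2+\chi_k)$ as $|k|\to\infty$ guarantees absolute convergence of the Fourier series, and the remaining poles further left produce the $O(1)$ remainder.

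The main obstacle is the algebraic bookkeeping of Step~1, namely organizing the quadratic cancellations so that $\tilde{T}(z)$ emerges in a form whose Mellin transform factors into the Laplace piece $(s+1)^{-1}$ times a sum of Mellin-convolved shifted kernels, together with the uniform tail estimates on the triple series in $G_2(\omega)$ along vertical lines needed in Step~4 to justify term-by-term Mellin inversion and the contour shift past $\Re(s)=-1$.
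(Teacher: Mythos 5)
Your high-level plan --- the corrected Poissonized variance $\tilde{V}(z) := \tilde{f}_2(z) - \tilde{f}_1(z)^2 - z\tilde{f}_1'(z)^2$, a functional equation of the same DST type for $\tilde{V}$, Laplace/Mellin extraction, and de-Poissonization via the Poisson--Charlier expansion --- is exactly the paper's strategy, and Step~2 is on target (the paper makes it rigorous via the notion of JS-admissibility and Proposition~\ref{prop-PC-asymp}). However, there are two concrete gaps that would prevent this outline from reproducing the stated formula.

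First, the characterization of the toll function in Step~1 is wrong. The paper's Lemma~\ref{lmm-Vz-dfe} applied to (\ref{poi-mean})--(\ref{poi-2nd-mom}) gives, after complete cancellation, $\tilde{V}(z) + \tilde{V}'(z) = 2\tilde{V}(z/2) + z\tilde{f}_1''(z)^2$. The toll $z\tilde{f}_1''(z)^2$ is \emph{not} ``linear in $z$ up to exponentially small tails'': using the closed form $\tilde{f}_1''(z) = Q_\infty\sum_{\ell\ge0}\frac{e^{-z/2^\ell}}{2^\ell Q_\ell}$ from (\ref{t-f1z}), one sees that $z\tilde{f}_1''(z)^2 = O(|z|)$ as $|z|\to 0$ but decays only algebraically, like $O(|z|^{-1})$, as $|z|\to\infty$ in the cone $|\arg z|\le\pi/2-\ve$; see (\ref{zf1d22}). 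This matters: the $O(|z|^{-1})$ decay at infinity is precisely what produces the logarithmic singularity of $\mathscr{L}[z\tilde{f}_1''(z)^2;s]$ at $s=0$ and hence the analyticity of $G_2(\omega)$ in $\Re(\omega)>0$; an exponentially small tail would give a different, and incorrect, pole structure.

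Second, Steps 3--4 would not produce the stated $G_2(\omega)$. You attribute the index $j$ to ``one iteration level for $\tilde{V}$ itself,'' the indices $h,\ell$ to ``factors $\tilde{f}_1(z/2^h)\tilde{f}_1(z/2^\ell)$ emerging from $z\tilde{f}_1'(z)^2$,'' and the $(s+1)^{-1}$ factor inside $\varphi$ to ``the $e^{-z}$ built into $\tilde{f}_1$.'' None of these match. In the paper's derivation of (\ref{G2w}) from the double integral (\ref{G2w-di}): the pair $(h,\ell)$ comes from expanding $z\tilde{f}_1''(z)^2 = Q_\infty^2\sum_{h,\ell\ge0}\frac{z\,e^{-z(2^{-h}+2^{-\ell})}}{Q_hQ_\ell 2^{h+\ell}}$ (there are no shifted copies $\tilde{f}_1(z/2^h)$ in the toll); the index $j$ comes from the partial-fraction expansion of the \emph{normalizing factor} $1/Q(-2s)$, not from iterating the $\tilde{V}$-equation; and the $(s+1)^{-1}$ factor in $\varphi(\omega;x) = \int_0^\infty \frac{s^{\omega-1}}{(s+1)(s+x)^2}\dd{s}$ is an artifact of the rescaling $s\mapsto 2^j s$ performed after the partial-fraction step, not of the exponential in the Poisson generating function. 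Without these corrections your nested-iteration scheme would produce a formally different (and much more entangled) triple sum, and it is not clear it could be massaged into the compact $\varphi$-representation stated in the theorem.
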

One can derive more precise asymptotic expansions for
$\mathbb{V}(X_n)$ by the same approach we use. We content ourselves
with (\ref{VXn-KPS}) for convenience of presentation.

Note that
\[
    \frac{G_2(2+\chi_k)}{\Gamma(2+\chi_k)}
    = \Gamma(-1-\chi_k)Q_\infty\sum_{j,h,\ell\ge 0}
    \frac{(-1)^j2^{-\binom{j+1}{2}}}{Q_jQ_hQ_\ell2^{h+\ell}}
    \lambda_k(2^{-j-h}+2^{-j-\ell}),
\]
where
\[
    \lambda_k(t)
    := \begin{cases}
        \displaystyle
        \frac{1-t^{\chi_k}(1+\chi_k(1-t))}{(1-t)^2}
        ,&\text{if}\ t\ne1;\\
        \displaystyle \frac{\chi_k(\chi_k-1)}{2}
        ,&\text{if}\ t=1.
    \end{cases}
\]
Thus the Fourier series is absolutely convergent by the order
estimate (see \cite{erdelyi1953a})
\begin{align}\label{Gamma-large-t}
    |\Gamma(c+it)|
    = O\left(|t|^{c-1/2} e^{-\pi|t|/2}\right) \qquad(|t|\to\infty).
\end{align}

Numerically, $C_{\textit{kps}} \approx 0.26600\,36454\,05936\dots$,
in accordance with that given in \cite{kirschenhofer94a}. Also
$|\varpi_{\textit{kps}}(t)|\le 1.9\times 10^{-5}$.

\paragraph{Sketch of our approach.} Following the discussions in
Introduction, we first prove that the Poisson-Charlier expansion for
the mean and that for the second moment are not only identities but
also asymptotic expansions. For that purpose, it proves very useful
to introduce the following notion, which we term \emph{JS-admissible
functions} (following the survey paper \cite{jacquet95a}). This is
reminiscent of the classical H-admissible (due to Hayman) or
HS-admissible (due to Harris-Schoenfeld) functions; see \cite[\S
VIII.5]{flajolet2009a}.

Once we prove the asymptotic nature of the Poisson-Charlier
expansions for the mean and the second moment, it remains, according
again to the discussions in Introduction, to derive more precise
asymptotics for the function $\tilde{V}$ (as defined in (\ref{Vz})),
for which we will use first the Laplace transforms, normalize the
Laplace transform properly, and then apply the Mellin transform.
Such an approach will turn out to be very effective and readily
applicable to more general cases such as bucket DSTs, which is
discussed in details in the next section. The approach parallels
closely in essence that introduced by Flajolet and Richmond in
\cite{flajolet92a}, which starts from the ordinary generating function,
followed by an Euler transform, a proper normalization and the Mellin
transform, and then conclude by singularity analysis; see also
\cite{dean06a}. The path we take, however, offers additional
operational advantages, as will be clear later. See
Figure~\ref{fig-egf-ogf} for a diagrammatic illustration of the two
analytic approaches.

\subsection{Analytic de-Poissonization and JS-admissibility}

The fundamental differential-functional equations for the analysis
of random DSTs is of the form
\[
    \tilde{f}(z)+\tilde{f}'(z)
    = 2\tilde{f}(z/2) + \tilde{g}(z),
\]
with suitably given initial value $f(0)$ and $\tilde{g}$. For such
functions, it turns out that the asymptotic nature of the
Poisson-Charlier expansions for the coefficients (or
\emph{de-Poissonization}) can be justified in a rather systematic
way by the introduction of the notion of JS-admissible functions.

\emph{Here and throughout this paper}, the generic symbol
$\ve\in(0,1)$ always represents an arbitrarily small constant whose
value is immaterial and may differ from one occurrence to another.

\begin{definition} An entire function $\tilde{f}$ is said to be
JS-admissible, denoted by $\tilde{f}\in\JS$, if the following two
conditions hold for $|z|\ge1$.
\begin{itemize}

\item[\textbf{(I)}] There exist $\alpha,\beta\in\mathbb{R}$ such that
uniformly for $|\arg(z)|\le\ve$,
\[
    \tilde{f}(z)
    = O\left(|z|^{\alpha} (\log_+|z|)^\beta\right),
\]
where $\log_+x := \log (1+x)$.
\item[\textbf{(O)}] Uniformly for $\ve\le|\arg(z)|\le\pi$,
\[
    f(z)
    := e^{z}\tilde{f}(z)
    = O\left(e^{(1-\ve)|z|}\right).
\]
\end{itemize}
\end{definition}
For convenience, we also write $\tilde{f}\in\JS_{\!\!\alpha,\beta}$
to indicate the growth order of $\tilde{f}$ inside the sector
$|\arg(z)|\le\ve$.

Note that if $\tilde{f}$ satisfies condition \textbf{(I)}, then, by
Cauchy's integral representation for derivatives (or by Ritt's
theorem; see \cite[Ch.~1, \S~4.3]{olver1974a}), we have,
\begin{align*}
    \tilde{f}^{(k)}(z)
    &= O\left(\oint_{|w-z|=\ve |z|}
    \frac{|w|^\alpha|
    (\log_+ |w|)^\beta}{|w-z|^{k+1}}|\dd w|\right)\\
    &= O\left(|z|^{\alpha-k}(\log_+|z|)^\beta\right).
\end{align*}

\begin{prop}\label{prop-PC-asymp}
Assume $\tilde{f}\in\JS_{\!\!\alpha,\beta}$. Let $f(z) := e^z
\tilde{f}(z)$. Then the Poisson-Charlier expansion (\ref{PC}) of
$f^{(n)}(0)$ is also an asymptotic expansion in the sense that
\begin{align*}
    a_n
    &:= f^{(n)}(0)
    = n![z^n]f(z)
    = n![z^n]e^z \tilde{f}(z) \\
    &= \sum_{0\le j<2k}
    \frac{\tilde{f}^{(j)}(n)}{j!}\,\tau_j(n) + O\left(n^{\alpha-k}
    \left(\log n\right)^\beta\right),
\end{align*}
for $k=1,2,\dots$.
\end{prop}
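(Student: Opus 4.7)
The plan is to use Cauchy's integral
\begin{equation*}
    a_n = \frac{n!}{2\pi i}\oint_{|z|=n}\frac{e^z \tilde{f}(z)}{z^{n+1}}\dd{z}
\end{equation*}
and to decompose $\tilde{f}(z) = T_{2k}(z) + R_{2k}(z)$, where $T_{2k}(z) := \sum_{0 \le j<2k}\tilde{f}^{(j)}(n)(z-n)^j/j!$ is the Taylor polynomial of $\tilde{f}$ at $z=n$. Since $T_{2k}$ is entire, inserting it into the Cauchy integral and appealing to the very definition $\tau_j(n) = n![z^n](z-n)^j e^z$ reproduces exactly the claimed main sum. It therefore suffices to bound the remainder integral
\begin{equation*}
    \mathcal{R}_{2k}
    := \frac{n!}{2\pi i}\oint_{|z|=n}\frac{e^z R_{2k}(z)}{z^{n+1}}\dd{z}
\end{equation*}
by $O(n^{\alpha-k}(\log n)^\beta)$. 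I would split the contour at $|\arg z|=\epsilon$ into a central arc and a peripheral arc and estimate each piece separately.

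On the peripheral arc, write $R_{2k}=\tilde{f}-T_{2k}$. For the $\tilde{f}$-contribution, condition \textbf{(O)} gives $|e^z\tilde{f}(z)| \le Ce^{(1-\epsilon)n}$ directly. For the $T_{2k}$-contribution, Cauchy's derivative bound combined with condition \textbf{(I)} on a circle $|w-n|=\rho n$ chosen small enough to stay inside the admissibility sector yields $|\tilde{f}^{(j)}(n)|/j! = O(\rho^{-j} n^{\alpha-j}(\log n)^\beta)$ for each $j<2k$, whence $|T_{2k}(z)| = O(n^\alpha(\log n)^\beta)$ uniformly on $|z|=n$, while $|e^z| \le e^{n\cos\epsilon}$ on the peripheral arc. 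In both cases Stirling's estimate $n! n^{-n} = O(\sqrt{n}\,e^{-n})$ absorbs the exponential factor and renders the peripheral contribution exponentially small in $n$.

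The central arc is where the real work lies. I would represent the Taylor remainder through Cauchy's formula on an auxiliary contour $|w-n|=2\epsilon n$ (for $\epsilon$ small enough that this contour still lies inside the sector where \textbf{(I)} applies), giving $|\tilde{f}(w)| = O(n^\alpha(\log n)^\beta)$ there and hence
\begin{equation*}
    |R_{2k}(z)|
    \le C\left(\frac{|z - n|}{n}\right)^{\!2k}
    n^\alpha(\log n)^\beta
\end{equation*}
uniformly for $z$ on the central arc. Setting $z = ne^{i\theta}$ and using $|e^{i\theta}-1|^{2k} \le |\theta|^{2k}$ together with $e^{n(\cos\theta-1)} \le e^{-cn\theta^2}$, the central contribution reduces to $n! e^n n^{-n}$ times the Gaussian-moment integral $\int|\theta|^{2k}e^{-cn\theta^2}\dd{\theta} = O(n^{-k-1/2})$. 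A final application of Stirling yields $n!e^n n^{-n} = O(\sqrt{n})$, producing the desired bound $O(n^{\alpha-k}(\log n)^\beta)$ and completing the proof.

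The main obstacle is balancing two competing uses of condition \textbf{(I)}: the Cauchy-type estimate on $R_{2k}$ requires the auxiliary contour around $z=n$ to be wide enough to enclose the entire central arc yet narrow enough to remain within the sector of admissibility, while the peripheral estimate simultaneously requires $|e^z|$ to be strictly less than $e^n$. Taking the cutoff $\epsilon$ to be a single fixed small constant (rather than a vanishing sequence, as in a more delicate saddle-point analysis à la Jacquet--Szpankowski) makes both requirements compatible with one choice of radius; the Gaussian concentration of $|e^z z^{-n}|$ near $z=n$ then handles the error arithmetic automatically.
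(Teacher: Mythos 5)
Your proposal is correct and reconstructs, in full detail, exactly the proof the paper only sketches: Cauchy's formula on the circle $|z|=n$ split at $|\arg z|=\ve$, condition \textbf{(O)} killing the peripheral arc, and condition \textbf{(I)} (via the Taylor remainder $R_{2k}$ estimated by an auxiliary Cauchy integral) delivering the $O(n^{\alpha-k}(\log n)^\beta)$ bound on the central arc through the Gaussian-moment calculation. The only point worth noting is that the paper bundles the derivative bound $\tilde{f}^{(j)}(z)=O(|z|^{\alpha-j}(\log_+|z|)^\beta)$ into a remark preceding the proposition, which you redeveloped on the fly; otherwise the arguments coincide step for step.
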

\begin{proof} (Sketch)
Starting from Cauchy's integral formula for the coefficients, the
lemma follows from a standard application of the saddle-point
method. Roughly, condition \textbf{(O)} guarantees that the integral
over the circle with radius $n$ and argument satisfying $\ve\le
|\arg(z)|\le\pi$ is negligible, while condition \textbf{(I)} implies
smooth estimates for all derivatives (and thus error terms).
\end{proof}

The polynomial growth of condition \textbf{(I)} is sufficient for
all our uses; see \cite{jacquet98a} for more general versions.

The real advantage of introducing admissibility is that it opens the
possibility of developing closure properties as we now discuss.

\begin{lmm} \label{lm-closure} Let $m$ be a nonnegative integer and
$\alpha\in(0,1)$.
\begin{itemize}

\item[(i)] $z^m, e^{-\alpha z}\in\JS$.

\item[(ii)] If $\tilde{f}\in\JS$, then $\tilde{f}(\alpha
z),z^m\tilde{f}\in\JS$.

\item[(iii)] If $\tilde{f}, \tilde{g} \in\JS$,
then $\tilde{f}+\tilde{g}\in\JS$.

\item[(iv)] If $\tilde{f}\in\JS$, then the product
$\tilde{P}\tilde{f}\in\JS$, where $\tilde{P}$ is a polynomial
of $z$.

\item[(v)] If $\tilde{f}, \tilde{g}\in\JS$, then $\tilde{h} \in\JS$,
where $\tilde{h}(z) := \tilde{f}(\alpha z)\tilde{g}((1-\alpha)z)$.

\item[(vi)] If $\tilde{f}\in\JS$, then $\tilde{f}'\in\JS$, and thus
$\tilde{f}^{(m)}\in\JS$.

\end{itemize}
\end{lmm}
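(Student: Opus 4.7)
The plan is to verify each of the six closure properties by checking conditions (I) and (O) separately. Throughout, I use the convention that $\ve>0$ is an arbitrarily small constant whose value may shrink from step to step. Condition (I) will always follow from routine bookkeeping of polynomial bounds, so the substantive content lies in condition (O): showing that $f(z)=e^z\tilde{f}(z)$ decays like $e^{(1-\ve)|z|}$ off the real axis.

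For (i), the function $z^m$ meets (I) trivially; for (O), one uses $|e^zz^m|=e^{\Re z}|z|^m$ together with $\Re z\le |z|\cos\ve$ when $|\arg z|\ge\ve$, so the exponential rate is strictly less than $1$. For $e^{-\alpha z}$ with $\alpha\in(0,1)$, condition (I) holds with exponents $(0,0)$ since $e^{-\alpha\Re z}$ is bounded in the right half-plane, and (O) is immediate from $e^z e^{-\alpha z}=e^{(1-\alpha)z}$. Parts (ii)--(iv) reduce to scaling and combining bounds: for $\tilde{f}(\alpha z)$, $\arg(\alpha z)=\arg(z)$ for $\alpha>0$, so the sector structure is preserved; for $z^m\tilde f$, condition (O) becomes $|z^mf(z)|=O(|z|^m e^{(1-\ve)|z|})=O(e^{(1-\ve')|z|})$ for any $\ve'<\ve$. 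Then (iii) is immediate from adding two bounds of the same shape, and (iv) follows from (i), (ii), and (iii) by expanding the polynomial.

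For (v), the key observation is the factorisation
\[
    e^z\tilde{h}(z)
    = \bigl(e^{\alpha z}\tilde{f}(\alpha z)\bigr)
      \bigl(e^{(1-\alpha)z}\tilde{g}((1-\alpha)z)\bigr)
    = f(\alpha z)\,g((1-\alpha)z).
\]
Since $\alpha z$ and $(1-\alpha)z$ share the argument of $z$, condition (O) applied to each of $f,g$ in the outer sector yields exponents $(1-\ve)\alpha|z|$ and $(1-\ve)(1-\alpha)|z|$, whose sum is exactly $(1-\ve)|z|$. Condition (I) is an equally clean multiplication.

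The main obstacle is (vi), closure under differentiation. I will invoke Cauchy's integral formula
\[
    \tilde{f}'(z)
    = \frac{1}{2\pi i}\oint_{|w-z|=r|z|}\frac{\tilde{f}(w)}{(w-z)^2}\,dw
\]
on a disk of radius $r|z|$ for a suitably small $r>0$. Condition (I) is straightforward: choosing $r$ small enough that $|\arg w|\le\ve$ throughout the contour whenever $|\arg z|\le\ve/2$ loses one factor of $r|z|$ and produces the expected $O(|z|^{\alpha-1}(\log|z|)^\beta)$ bound. Condition (O) is the delicate point: writing $\tilde{f}(w)=e^{-w}f(w)$ gives
\[
    e^z\tilde{f}'(z)
    = \frac{1}{2\pi i}\oint_{|w-z|=r|z|}
      \frac{e^{z-w}\,f(w)}{(w-z)^2}\,dw,
\]
and the bounds $|e^{z-w}|\le e^{r|z|}$ and $|f(w)|=O(e^{(1-\ve)|w|})=O(e^{(1-\ve)(1+r)|z|})$ yield an overall exponent $(1-\ve)(1+r)+r=1-\ve+O(r)$. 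Choosing $r$ small enough relative to $\ve$ makes this strictly less than $1$, producing a valid new admissibility constant. The principal technical subtlety is balancing $r$ against $\ve$ so that the radial slack needed to stay on the contour does not destroy the exponential decay inherited from $f$.
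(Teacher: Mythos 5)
The paper's own proof is simply ``Straightforward and omitted,'' so your proposal genuinely supplies missing content, and items (i)--(v) are handled cleanly and correctly. The treatment of (vi), however, has a small but real gap that should be made explicit. When $z$ satisfies $\ve\le|\arg z|\le\pi$ with $\arg z$ close to $\pm\ve$, the circle $|w-z|=r|z|$ can dip into the inner sector $|\arg w|<\ve$, where the bound $|f(w)|=O(e^{(1-\ve)|w|})$ from condition \textbf{(O)} is simply not available; your estimate plugs it in on the whole contour. The fix is the generic-$\ve$ device implicit throughout the paper: since \textbf{(O)} holds in the wider sector $\ve/2\le|\arg w|\le\pi$ (with some rate $1-\delta$, $\delta>0$), first choose $r<\sin(\ve/2)$ so that $|\arg w|\ge|\arg z|-\arcsin r\ge\ve/2$ for all $w$ on the contour, then apply \textbf{(O)} in that wider sector. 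This gives an exponent $(1-\delta)(1+r)+r$, and shrinking $r$ further makes it $<1$. Your phrase about ``balancing $r$ against $\ve$'' gestures at this, but the text as written applies the $(1-\ve)$-bound on a contour where it need not hold, and never mentions the sector constraint on $r$; making that constraint explicit closes the argument. (Alternatively one can split the contour and use \textbf{(I)} together with $\Re w\le|w|\cos(\ve-\arcsin r)<|w|$ on the part that enters the inner sector, which yields a similar final rate.)
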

\begin{proof} Straightforward and omitted.
\end{proof}

Specific to our need for the analysis of DSTs is the following
transfer principle.
\begin{prop} \label{prop-dst-tr}
Let $\tilde{f}(z)$ and $\tilde{g}(z)$ be entire functions satisfying
\begin{align}\label{dfe-DST}
    \tilde{f}(z)+\tilde{f}'(z)
    =2\tilde{f}(z/2)+\tilde{g}(z),
\end{align}
with $f(0)=0$. Then
\[
    \tilde{g}\in\JS
    \quad\text{if and only if}\quad\tilde{f}\in\JS.
\]
\end{prop}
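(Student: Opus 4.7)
The forward implication ($\tilde{f}\in\JS\Rightarrow\tilde{g}\in\JS$) is immediate from the closure properties of Lemma~\ref{lm-closure}. Rewriting the equation as $\tilde{g}(z) = \tilde{f}(z) + \tilde{f}'(z) - 2\tilde{f}(z/2)$, each summand on the right belongs to $\JS$ (by parts~(ii), (iii), (vi)), so their combination $\tilde{g}$ does too.

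For the reverse implication, the plan is to solve the equation explicitly via the Laplace transform and then verify the JS conditions on the resulting representation. Setting $\hat{f}(s) := \int_0^\infty\tilde{f}(t)e^{-st}\,\mathrm{d}t$ (and similarly $\hat{g}$), the standard transform rules together with $\tilde{f}(0)=0$ turn the differential-functional equation into the purely functional one
$$
(1+s)\hat{f}(s) = 4\hat{f}(2s) + \hat{g}(s).
$$
Iterating this identity produces
$$
\hat{f}(s) = \sum_{k\ge 0}\frac{4^k\hat{g}(2^k s)}{\prod_{j=0}^{k}(1+2^j s)},
$$
the remainder after $K$ iterations being proportional to $4^{K+1}/\prod_{j=0}^{K}(1+2^j s)$, which decays super-exponentially in $K$ for $\Re(s)>0$; convergence therefore follows once $\hat{g}$ is of at most polynomial growth in a right half-plane, itself a consequence of the JS-admissibility of $\tilde{g}$. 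Inverting the Laplace transform term by term then gives an explicit series representation of $\tilde{f}$ in terms of $\tilde{g}$ and simple convolution kernels.

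With this representation in hand, the two defining conditions of JS-admissibility should be read off from the corresponding ones on $\tilde{g}$. Condition~(I)---polynomial growth in the sector $|\arg z|\le\ve$---follows from pointwise estimates on each summand, exploiting the rapid damping of the factors $4^k/\prod_j(1+2^js)$ together with the polynomial bound on $\hat{g}$. Condition~(O)---the subexponential bound $e^z\tilde{f}(z)=O(e^{(1-\ve)|z|})$ outside the sector---is the main obstacle: it requires analytic continuation of the Laplace-inverse representation into complex directions and a careful analysis of the resulting oscillatory cancellations, so as to ensure that the dominant exponential factor $e^z$ is cancelled by $\tilde{f}$ to exactly the extent already witnessed for $\tilde{g}$. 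An alternative route, working directly in coefficient space via the recurrence $(n+1)\tilde{f}_{n+1}=(2^{1-n}-1)\tilde{f}_n+\tilde{g}_n$ derived from $\tilde{f}(0)=0$, would be equivalent in substance but equally delicate at this last step.
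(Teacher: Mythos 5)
Your decomposition matches the paper's: the direction $\tilde{f}\in\JS\Rightarrow\tilde{g}\in\JS$ is a one-liner from Lemma~\ref{lm-closure}, and all the work is in showing $\tilde{g}\in\JS\Rightarrow\tilde{f}\in\JS$. For that hard direction, however, you take a genuinely different route (solve the Laplace-transformed equation explicitly and read off bounds from the series) from the paper's, and your proof stops precisely where the difficulty begins. You yourself flag condition \textbf{(O)} as ``the main obstacle'' and then say only that it ``requires analytic continuation\dots and a careful analysis of the resulting oscillatory cancellations'' --- that is the whole content of the claim in that regime, and it is left undone. This is a genuine gap, not a detail: the Laplace transform $\hat f(s)=\int_0^\infty\tilde f(t)e^{-st}\,\mathrm{d}t$ and its inverse capture $\tilde f$ only for $z$ in a narrow sector around the positive real axis, whereas \textbf{(O)} is a statement about $f(z)=e^z\tilde f(z)$ in the complementary sector $\ve\le|\arg z|\le\pi$, all the way to the negative real axis, where $|e^{z/2^j}|$ grows like $e^{|z|/2^j}$ and the convergence of the inverted series is not at all automatic. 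There is also a circularity lurking at the very first step: for $\hat f(s)$ to exist one needs $\tilde f$ to have sub-exponential growth on $\mathbb{R}_{\ge 0}$, which is essentially what you are trying to prove; an entire $\tilde f$ need not a priori satisfy this.

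The paper sidesteps both issues by staying in the $z$-plane. It integrates (\ref{dfe-DST}) once (using $f(0)=0$) to get the Volterra-type identity
\[
    f(z)=z\int_0^1\left(2e^{tz/2}f(tz/2)+g(tz)\right)\mathrm{d}t,
\]
introduces the sectorial maximum $B(r):=\max\{|f(z)|: |z|\le r,\ \ve\le|\arg z|\le\pi\}$, and bounds $B(r)\le Ce^{r\cos(\ve)/2}B(r/2)+O(e^{(1-\ve)r})$. Because $\cos\ve/2<1-\ve$, iterating this recursion (a Gronwall-type bootstrap on a majorant $K$) gives $B(r)=O(r^{\log_2 C}e^{r\cos\ve})$, i.e.\ \textbf{(O)}. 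The same representation with $e^{-(1-t)z}$ moved inside handles \textbf{(I)}. The key point this buys is that the recursion lives directly in the sector where the bound is needed, so there is no ``continuation across directions'' to justify. If you want to keep your Laplace-series route, you would have to supply exactly the majorization step the paper performs, but now term by term on the series, with careful control of the exponentials $e^{-z/2^j}$ off the positive real axis --- which is more, not less, work than the direct argument.
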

\begin{proof}
Assume $\tilde{g}\in\JS$. We check first the condition \textbf{(O)}
for $\tilde{f}$. Let $f(z):=e^z\tilde{f}(z)$ and
$g(z):=e^{z}\tilde{g}(z)$. By (\ref{dfe-DST}),
\[
    f'(z)
    =2e^{z/2}f(z/2)+g(z).
\]
Consequently, since $f(0)=0$,
\begin{align}\label{dst-ir-f}
    f(z)
    =\int_{0}^{z}\left(2e^{t/2}f(t/2)+g(t)\right)\dd t
    =z\int_{0}^{1}\left(2e^{tz/2}f(tz/2)+g(tz)\right)\dd t.
\end{align}
Now define
\[
    B(r)
    :=\max_{z\in\mathcal{C}_{r,\ve}}| f(z)|,
\]
where
\[
    \mathcal{C}_{r,\ve}
    :=\{z\ :\ |z|\le r,
    \ve\le|\arg(z)|\le\pi\},\qquad(r\ge 0;0<\ve<\pi/2).
\]
Then, by (\ref{dst-ir-f}), we have
\begin{align*}
    B(r)
    &\le r\int_0^1\left(2e^{tr\cos(\ve)/2}B(tr/2)
    +|g(tr)|\right)\dd t \\
    &=  \int_0^r\left(2e^{t\cos(\ve)/2}B(t/2)
    +O\left(e^{(1-\ve)t}\right)\right)\dd t\\
    &\le Ce^{r\cos(\ve)/2}B(r/2)+O\left(e^{(1-\ve)r}\right),
\end{align*}
where $C=4/\cos\ve>1$. This suggests that we define a majorant
function $K(r)$ of $B(r)$ by $K(r)=O(1)$ for $r\le1$ and for $r\ge1$
\[
    K(r)
    =Ce^{r\cos(\ve)/2}K(r/2)+h(r),
\]
where $h$ is an entire function satisfying $h(r)=O(1)$ for $r\le1$
and $h(r)=O\left(e^{(1-\ve)r}\right)$ for $r\ge1$. Let
$\tilde{K}(r):=e^{-r\cos(\ve)}K(r)$ and $\tilde{h}(r) :=
e^{-r\cos(\ve)}h(r)$. Then since $\cos\ve-1+\ve>0$ for
$\ve\in(0,1)$, we obtain
\[
    \tilde{K}(r)
    =C\tilde{K}(r/2)+\tilde{h}(r),\quad \tilde{h}(r)=O(1).
\]
Thus if we choose $m=\lceil \log_2r\rceil$ such that $2^m\ge r$
and iterate $m$ times the functional equation, then we obtain the
estimate
\begin{align*}
    \tilde{K}(r)
    &=\sum_{0\le k\le m} C^k \tilde{h}(r/2^k)+C^{m+1}
    \tilde{K}(r/2^{m+1})\\
    &= O\left(\sum_{r/2^k>1}C^k+C^m\right)\\
    &= O\left(r^{\log_2C}\right).
\end{align*}
Thus
\[
    B(r)
    = O\left(r^{\log_2C} e^{r\cos\ve}\right).
\]
which establishes condition \textbf{(O)}.

Our proof for $\tilde{f}$ satisfying \textbf{(I)} proceeds in a
similar manner and starts again from (\ref{dst-ir-f}) but of the
form
\[
    \tilde{f}(z)
    =z\int_{0}^{1}e^{-(1-t)z}
    \left(2\tilde{f}(tz/2)+\tilde{g}(tz)\right)\dd t.
\]
Now, define
\[
    \tilde{B}(r)
    :=\max_{z\in\mathcal{S}_{r,\ve}}|\tilde{f}(z)|,
\]
where
\[
    \mathcal{S}_{r,\ve}
    :=\{z\ :\ |z|\le r,|\arg(z)|\le\ve\},
    \qquad (r\ge 0;0<\ve<\pi/2).
\]
Then
\begin{align*}
    \tilde{B}(r)
    &\le r\int_0^1 e^{-(1-t)r\cos\ve}\left(2
    \tilde{B}(tr/2)+|\tilde{g}(tr)|\right)\dd t \\
    &= \int_1^r\left(2e^{-(r-t)\cos\ve}\tilde{B}(t/2)
    +O\left(e^{-(r-t)\cos\ve}
    t^\alpha(\log_+t)^\beta\right)\right)\dd t+O(1)\\
    &\le C\tilde{B}(r/2) + O\left(r^{\alpha}
    (\log_+r)^\beta+1\right),
\end{align*}
where $C=2/\cos\ve>2$. The same majorization argument used above for
\textbf{(O)} then leads to
\begin{align*}
    \tilde{B}(r)
    =\begin{cases}
        O(r^{\log_2C}),&\text{if } \alpha<\log_2C;\\
        O(r^{\log_2C}(\log_+r)^{\beta+1}),
        &\text{if }\alpha=\log_2C;\\
        O\left(r^{\alpha} (\log_+r)^\beta\right),&
        \text{if }\alpha>\log_2C.
    \end{cases}
\end{align*}
This proves \textbf{(I)} for $\tilde{f}$.

The necessity part follows trivially from Lemma~\ref{lm-closure}.
\end{proof}

The estimates we derived of asymptotic-transfer type are indeed
over-pessimistic when $1\le\alpha\le\log_2C$, but they are
sufficient for our use. The true orders are those with $\ve\to0$,
which can be proved by the Laplace-Mellin-de-Poissonization approach
we use later.

Lemma~\ref{lm-closure} and Proposition~\ref{prop-dst-tr} provide
very effective tools for justifying the de-Poissonization of
functions satisfying the equation (\ref{dfe-DST}), which is often
carried out through the use of the increasing-domain argument (see
\cite{jacquet98a}). The latter argument is also inductive in nature
and similar to the one we are developing here, although it is less
``mechanical" and less systematic.

\subsection{Generating functions and integral transforms}

Since our approach is purely analytic and relies heavily on
generating functions, we first derive in this subsection the
differential-functional equations we will be working on later. Then
we apply the de-Poissonization tools we developed to the Poisson
generating functions of the mean and the second moment and justify
the asymptotic nature of the corresponding Poisson-Charlier
expansions. Then we sketch the asymptotic tools we will follow based
on the Laplace and Mellin transforms.

\paragraph{Generating functions.}
In terms of the moment generating function $M_n(y):=
\mathbb{E}(e^{X_n y})$, the recurrence (\ref{rec-xn-dst}) translates
into
\begin{align}\label{rr-TPL}
    M_{n+1}(y)
    =e^{ny}2^{-n}\sum_{0\le j\le n}
    \binom{n}{j}M_j(y)M_{n-j}(y),\qquad (n\ge 0),
\end{align}
with $M_0(y)=1$.

Now consider the bivariate exponential generating function
\[
    F(z,y)
    :=\sum_{n\ge 0}\frac{M_{n}(y)}{n!}z^n.
\]
Then by (\ref{rr-TPL}),
\[
    \frac{\partial}{\partial z}F(z,y)
    =F\left(\frac{e^yz}{2},y\right)^2,
\]
and the Poisson generating function $\tilde{F}(z,y):=e^{-z}F(z,y)$
satisfies the differential-functional equation
\begin{equation}\label{equ-tildeP}
    \tilde{F}(z,y)+\frac{\partial}{\partial z}\tilde{F}(z,y)
    =e^{(e^y-1)z}\tilde{F}\left(\frac{e^yz}{2},y\right)^2,
\end{equation}
with $\tilde{F}(0,y)=1$. No exact solution of such a nonlinear
differential equation is available; see \cite{jacquet95a} for an
asymptotic approximation to $\tilde{F}$ for $y$ near unity.

\paragraph{Mean and second moment.}
Let now
\[
    \tilde{F}(z,y)
    :=\sum_{m\ge 0}\frac{\tilde{f}_m(z)}{m!}y^m,
\]
where $\tilde{f}_m(z)$ denotes the Poisson generating function of
$\mathbb{E}(X_n^{m})$. Then we deduce from (\ref{equ-tildeP}) that
\begin{align}
    \tilde{f}_1(z)+\tilde{f}'_1(z)
    &=2\tilde{f}_1(z/2)+z, \label{poi-mean}\\
    \tilde{f}_2(z)+\tilde{f}'_2(z)
    &=2\tilde{f}_2(z/2)+2\tilde{f}_1(z/2)^2
    +4z\tilde{f}_1(z/2)+2z\tilde{f}'_1(z/2)+z+z^2,
    \label{poi-2nd-mom}
\end{align}
with the initial conditions $\tilde{f}_1(0)=\tilde{f}_2(0)=0$.

\begin{prop} \label{prop-PC-mean-var}
The Poisson-Charlier expansion for the mean and that for the second
moment are both asymptotic expansions
\begin{align*}
    \mathbb{E}(X_n)
    &= \sum_{0\le j<2k}\frac{\tilde{f}_1^{(j)}(n)}{j!}\,\tau_j(n)
    + O\left(n^{-k+1}\right),\\
    \mathbb{E}(X_n^2)
    &= \sum_{0\le j<2k}\frac{\tilde{f}_2^{(j)}(n)}{j!}\,\tau_j(n)
    + O\left(n^{-k+2}(\log n)^2\right),
\end{align*}
for $k=1,2,\dots$.
\end{prop}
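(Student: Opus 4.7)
The plan is to apply Proposition~\ref{prop-PC-asymp} to $\tilde{f}_1$ and $\tilde{f}_2$ in turn; everything then reduces to checking JS-admissibility together with the correct exponents $(\alpha,\beta)$. The two ingredients that do all the work are the transfer principle (Proposition~\ref{prop-dst-tr}) for equations of the shape $\tilde{f}+\tilde{f}'=2\tilde{f}(z/2)+\tilde{g}(z)$, and the closure lemma (Lemma~\ref{lm-closure}) that lets us assemble admissible functions by sums, scalar dilations, polynomial factors and products of the form $\tilde{f}(\alpha z)\tilde{g}((1-\alpha)z)$.

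First I would handle the mean. The Poisson generating function $\tilde{f}_1$ satisfies (\ref{poi-mean}), whose inhomogeneity is simply $z$. Since $z\in\JS_{\!\!1,0}$ by Lemma~\ref{lm-closure}(i),(iv), Proposition~\ref{prop-dst-tr} yields $\tilde{f}_1\in\JS$. Since $\mathbb{E}(X_n)\asymp n\log n$ the sharp growth sector estimate is $\tilde{f}_1\in\JS_{\!\!1,1}$ (this is consistent with the crude bound coming out of the transfer proof, and any slightly weaker bound still does the job). Feeding $\alpha=1$, $\beta=1$ into Proposition~\ref{prop-PC-asymp} gives the Poisson--Charlier expansion with error $O(n^{1-k}\log n)$, which after absorbing the logarithmic factor into a slightly decreased $k$ (or, equivalently, noting that $\tau_j(n)=O(n^{\lfloor j/2\rfloor})$ and $\tilde{f}_1^{(j)}(n)=O(n^{1-j})$ for $j\ge 2$) yields precisely the $O(n^{-k+1})$ claimed for the mean.

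Next I would treat the second moment, where the non-homogeneous side of (\ref{poi-2nd-mom}) is
\[
    \tilde{g}_2(z)
    := 2\tilde{f}_1(z/2)^2+4z\tilde{f}_1(z/2)
    +2z\tilde{f}_1'(z/2)+z+z^2.
\]
I would show $\tilde{g}_2\in\JS$ termwise using Lemma~\ref{lm-closure}: the monomials $z,z^2$ are in $\JS$ by (i),(iv); multiplying the admissible $\tilde{f}_1(z/2)$ or $\tilde{f}_1'(z/2)$ (admissibility of derivatives by (vi)) by the polynomial $z$ stays in $\JS$ by (iv); and the square $\tilde{f}_1(z/2)^2$ is $\tilde{f}_1(\tfrac12 z)\tilde{f}_1((1-\tfrac12)z)$, hence admissible by (v). Sums stay admissible by (iii). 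A second application of Proposition~\ref{prop-dst-tr} then delivers $\tilde{f}_2\in\JS$. Keeping track of the exponents through each closure step, one finds $\tilde{f}_2\in\JS_{\!\!2,2}$ (consistent with $\mathbb{E}(X_n^2)\asymp n^2(\log n)^2$), and Proposition~\ref{prop-PC-asymp} produces the asymptotic expansion with error $O(n^{2-k}(\log n)^2)$.

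The main obstacle here is purely bookkeeping rather than conceptual: one must verify that the growth exponents $(\alpha,\beta)$ passed along by Proposition~\ref{prop-dst-tr} and Lemma~\ref{lm-closure} are sharp enough to match the claimed error bounds $O(n^{-k+1})$ and $O(n^{-k+2}(\log n)^2)$. For the mean this amounts to observing that either the transfer of $\tilde{g}=z$ gives an $\alpha$-exponent no larger than $\log_2 C$ with any $C>2/\cos\ve$, so letting $\ve\to 0$ in the admissibility bound does not degrade the exponent in Proposition~\ref{prop-PC-asymp} (whose statement is already sector-uniform for fixed $\ve$). For the second moment the same remark applies, since Lemma~\ref{lm-closure}(v) doubles the exponent correctly. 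All remaining steps are mechanical applications of the closure calculus and of Proposition~\ref{prop-PC-asymp}.
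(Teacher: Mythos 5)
Your proposal follows essentially the same route as the paper's (sketched) proof: establish $\tilde{f}_1,\tilde{f}_2\in\JS$ by combining Lemma~\ref{lm-closure} with the transfer principle of Proposition~\ref{prop-dst-tr}, then invoke Proposition~\ref{prop-PC-asymp}. Where you go a bit further than the paper's sketch is in noticing that a naive substitution of $(\alpha,\beta)=(1,1)$ (resp.\ $(2,2)$) into the error bound of Proposition~\ref{prop-PC-asymp} would leave a spurious $\log n$ factor relative to what Proposition~\ref{prop-PC-mean-var} actually claims; the paper silently defers this to the precise estimates $\tilde{f}_1(z)\asymp |z|\,|\log z|$ derived later in \S\ref{mean-dst}, whereas you explicitly supply the fix via $\tilde{f}_1^{(j)}(n)=O(n^{1-j})$ for $j\ge 2$ together with $\tau_j(n)=O(n^{\lfloor j/2\rfloor})$. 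One small caveat: your first suggestion (``absorbing the log into a slightly decreased $k$'') does not work by itself because $k$ is an integer and changing it alters the sum, and the crude exponent $\alpha=\log_2(2/\cos\ve)>1$ coming out of the transfer proof is strictly weaker than what is needed; it is your second, derivative-based argument that does the real work, and that argument in turn relies on the refined asymptotic form of $\tilde{f}_1$, exactly as the paper's forward reference indicates.
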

\begin{proof} (Sketch)
By Lemma~\ref{lm-closure} and Proposition~\ref{prop-dst-tr}, we see
that both $\tilde{f}_1, \tilde{f}_2\in\JS$, and thus we can apply
Proposition~\ref{prop-PC-asymp}. Indeed the proof of
Proposition~\ref{prop-dst-tr} provides already crude bounds for the
growth order of $\tilde{f}_1, \tilde{f}_2$. The more precise
estimates $\tilde{f}_1(z)\asymp |z||\log z|$ and
$\tilde{f}_2(z)\asymp |z|^2|\log z|^2$ for $z$ inside the sector
$\{z\,:\,|\arg(z)|\le\ve\}$ will be provided later in the next two
subsections.
\end{proof}

\paragraph{An asymptotic approach based on Laplace and Mellin
transforms.} Once the de-Poissonization steps are justified, all
that remains for the proof of Theorem~\ref{thm-KPS} is to derive
more precise asymptotic approximations to $\tilde{f}_1$ and
$\tilde{V}$ (as defined in (\ref{Vz})). The approach we use begins
with a more precise characterization of $\tilde{f}_1(z)$. Both
$\tilde{f}_1$ and $\tilde{V}$  satisfy a differential-functional
equation of the form
\[
    \tilde{f}(z) +\tilde{f}'(z)
    = 2\tilde{f}(z/2) + \tilde{g}(z),
\]
with the initial condition $\tilde{f}(0)=0$. To derive the
asymptotics of $\tilde{f}$ for large complex $z$, we proceed along
the following principal steps; see also \cite{dean06a}.

\begin{description}
\item [\emph{Laplace transform}:]
The Laplace transform of $\tilde{f}$ satisfies
\begin{align}\label{laplace}
    (s+1) \mathscr{L}[\tilde{f};s]
    = 4\mathscr{L}[\tilde{f};2s] + \mathscr{L}
    [\tilde{g};s],
\end{align}
which exists and defines an analytic function if $\tilde{g}$ grows
at most polynomially for large $|z|$.

\item [\emph{Normalizing factor}:] Dividing both sides of
(\ref{laplace}) by $Q(-2s)=\prod_{j\ge0}(1+s/2^j)$ gives a
functional equation of the form
\[
    \bar{\mathscr{L}}[\tilde{f};s]
    = 4\bar{\mathscr{L}}[\tilde{f};2s] +
    \frac{\mathscr{L}[\tilde{g};s]}{Q(-2s)},
\]
where $\bar{\mathscr{L}}[\tilde{f};s] :=
\mathscr{L}[\tilde{f};s]/Q(-s)$.

\item [\emph{Mellin transform}:] The Mellin transform of
$\bar{\mathscr{L}}$ then satisfies
\[
   \mathscr{M}[\bar{\mathscr{L}}[\tilde{f}_1;s];\omega]
   = \frac1{1-2^{2-\omega}}{\mathscr{M}
    \left[\frac{\mathscr{L}[\tilde{g};s]}{Q(-2s)};\omega\right]}.
\]

\item[\emph{Inverting the process.}] We first derive the local
behavior of $\tilde{\mathscr{L}}[\tilde{f};s]$ for small $s$ by
the Mellin inversion (often by calculus of residues after justification
of analytic properties), and then the asymptotic behavior of
$\tilde{f}(z)$ for large $z$ is derived by the Laplace inversion,
similar to singularity analysis.

\end{description}

\subsection{Expected internal path-length of random DSTs}
\label{mean-dst}

We consider in details in this subsection the expected value $\mu_n
:= \mathbb{E}(X_n)$ of the total internal path-length,
paving the way for the asymptotic analysis of the variance.
Starting from either the equation (\ref{poi-mean}) or the recurrence
\[
    \mu_{n+1}
    = 2^{1-n}\sum_{0\le j\le n}\binom{n}{j}\mu_j + n \qquad(n\ge0)
\]
with $\mu_0:=0$, there are several approaches to the asymptotics of
$\mu_n$. We will briefly describe the one using integral
representation of finite differences (or Rice's integrals) and then
present the Laplace and Mellin transforms we will use, which, as
will become clear, is essentially the Flajolet-Richmond approach
(see \cite{flajolet92a}).

\paragraph{Rice's integral representation.}

By (\ref{poi-mean}), we have, with $\tilde{\mu}_n :=
n![z^n]\tilde{f}_1(z)$,
\[
    \tilde{\mu}_{n+1}
    = -\left(1-2^{1-n}\right) \tilde{\mu}_n \qquad(n\ge0),
\]
with $\tilde{\mu}_0=0$, which by iteration yields
\begin{align}
    \tilde{\mu}_n
    = (-1)^n Q_{n-2},\quad
    Q_n
    := \prod_{1\le j\le n} \left(1-2^{-j}
    \right). \label{t-mun}
\end{align}
Thus by Rice's formula (\cite{flajolet95b})
\begin{align*}
    \mu_n
    &:= \mathbb{E}(X_n)
    = \sum_{2\le j\le n}\binom{n}{j} \tilde{\mu}_j \\
    &= \frac1{2\pi i} \int_{(\frac32)} \frac{\Gamma(n+1)
    \Gamma(-s)}{\Gamma(n+1-s)}\cdot
    \frac{Q(1)}{(1-2^{1-s})Q(2^{1-s})}\dd s,
\end{align*}
where the integration path $\int_{(c)}$ is along the vertical line
with real part equal to $c$ and $Q$ is defined in (\ref{Q}). We then
obtain (\ref{mean-asymp}) by standard arguments; see
\cite{flajolet86d} or \cite{mahmoud1992a} for details.

This approach readily gives the approximation (\ref{mean-asymp}) for
the mean and can be refined to obtain a full asymptotic expansion.
However, its extension to the variance becomes extremely messy, as
shown in \cite{kirschenhofer94a}.

\paragraph{Laplace transform.}
We first show that the asymptotics of $\tilde{f}_1(z)$ can be
derived through a direct use of the Laplace and Mellin transforms,
which relies on several ad hoc steps that are not easily extended.
A more general procedure will be developed below.

By (\ref{poi-mean}), we see that the Laplace transform of $f_1(z)$
satisfies the functional equation
\begin{align}\label{rel-laplace}
    (s+1)\mathscr{L}[\tilde{f}_1;s]
    = 4\mathscr{L}[\tilde{f}_1;2s] + s^{-2},
\end{align}
which exists and is analytic in $\mathbb{C}\setminus(-\infty,0]$.

By dividing both sides by $s+1$ and by iteration, we get
\begin{align}
    \mathscr{L}[\tilde{f}_1;s]
    = \frac1{s^2} \sum_{j\ge0}
    \frac{1}{(s+1)\cdots(2^js+1)}. \label{Lf1s}
\end{align}
On the other hand, from (\ref{t-mun}), we have
\begin{align*}
    \mathscr{L}[\tilde{f}_1;s]
    &= \int_0^\infty e^{-sz} \sum_{n\ge0}
    \frac{\tilde{\mu}_n}{n!}\, z^n dz \\
    &= \sum_{n\ge0} (-1)^n Q_n s^{-n-3}.
\end{align*}
This implies the identity
\[
    \sum_{n\ge0} \frac{(-1)^n Q_n}{s^{n+1}}
    = \sum_{j\ge0}\frac{1}{(s+1)\cdots(2^js+1)}.
\]
However, neither form is useful for our asymptotic purpose.

Now by partial fraction expansion, we obtain
\begin{align*}
    \frac{1}{(s+1)\cdots(2^js+1)}
    = \sum_{0\le \ell \le j}\frac{(-1)^{j-\ell}
    2^{-\binom{j-\ell+1}{2}-\ell}}{(s+2^{-\ell})
    Q_\ell Q_{j-\ell}}.
\end{align*}
Thus
\begin{align*}
    \mathscr{L}[\tilde{f}_1;s]
    &= \frac1{s^2}\sum_{j\ge0}\sum_{0\le \ell \le j}
    \frac{(-1)^{j-\ell}
    2^{-\binom{j-\ell+1}{2}-\ell}}{(s+2^{-\ell})
    Q_\ell Q_{j-\ell}} \\
    &= \frac1{s^2}\sum_{\ell\ge0} \frac{1}{Q_\ell(2^\ell s+1)}
    \sum_{j\ge0}\frac{(-1)^j 2^{-\binom{j+1}{2}}}{Q_j}.
\end{align*}
Note that
\[
    \sum_{j\ge0} \frac{2^js}{(s+1)\cdots (2^js+1)}
    = 1.
\]
By the Euler identity
\[
    \sum_{j\ge0}\frac{q^{\binom{j}{2}} z^j}{(1-q)\cdots (1-q^j)}
    = \prod_{k\ge0}\left(1+q^k z\right),
\]
we see that
\[
    \sum_{j\ge0} \frac{(-1)^j 2^{-\binom{j+1}{2}}}{Q_j}
    = Q(1)
    = Q_\infty\approx 0.28878809\dots
\]
This gives
\[
    \mathscr{L}[\tilde{f}_1;s]
    = \frac{Q_\infty}{s^2}\sum_{\ell\ge0}
    \frac{1}{Q_\ell(2^\ell s+1)},
\]
and then
\begin{align}
    \tilde{f}_1(z)
    = Q_\infty \sum_{\ell\ge0}
    \frac{2^\ell}{Q_\ell}\left(e^{-z/2^\ell}
    -1+\frac{z}{2^\ell}\right). \label{t-f1z}
\end{align}
Consequently,
\[
    \mu_n
    = Q_\infty \sum_{\ell\ge0}\frac{2^\ell}{Q_\ell}
    \left((1-2^{-\ell})^n-1+2^{-\ell}n\right).
\]

Asymptotically, we have, by (\ref{t-f1z}) and the identity
\begin{align}\label{inv-Qz}
    \frac1{Q(z)}
    = \prod_{j\ge1}\frac1{1-z/2^j}
    = \sum_{\ell\ge0} \frac{z^\ell}{Q_\ell 2^\ell}
    \qquad(|z|<2),
\end{align}
the Mellin integral representation
\[
    \tilde{f}_1(z)
    = \frac1{2\pi i} \int_{(-3/2)}
    \frac{Q(1) \Gamma(s) z^{-s}}{(1-2^{s+1})Q(2^{s+1})} \dd s,
\]
from which we derive the asymptotic approximation
\begin{align}\label{tf1z-asymp}
    \tilde{f}_1(z)
    = (z+1)\log_2 z + z\left(\frac{\gamma-1}{\log 2}
    + \frac12-c_1+\varpi_1(\log_2z)\right)+O(1),
\end{align}
uniformly for $|z|\to\infty$ and $|\arg(z)|\le \pi/2-\ve$,
where $\varpi_1$ is given in (\ref{F1u}). (As usual, we use the
asymptotic estimate (\ref{Gamma-large-t}) for the Gamma function.)

\paragraph{Laplace and Mellin transforms.}
We now re-do the analysis for $\tilde{f}_1(z)$ in a more general way
that can be easily extended to other cases.

We again start from (\ref{rel-laplace}) and consider
\[
    \bar{\mathscr{L}}[\tilde{f}_1;s]
    :=\frac{\mathscr{L}[\tilde{f}_1;s]}{Q(-s)},
\]
where $Q(z)$ is defined in (\ref{Q}). Dividing both sides of
(\ref{rel-laplace}) by $Q(-2s)$ yields
\begin{align}\label{L-bar}
    \bar{\mathscr{L}}[\tilde{f}_1;s]
    = 4\bar{\mathscr{L}}[\tilde{f}_1;2s]+\frac1{Q(-2s)s^2}.
\end{align}

We now apply the Mellin transform. Note that we have, by the fact
that $X_0=X_1=0$ and the proof of Proposition~\ref{prop-dst-tr},
\begin{equation*}
    \tilde{f}_1(z)
    =\begin{cases}
        O(z^2),&\text{if}\ z\rightarrow 0^+;\\
        O(z^{1+\ve}),&\text{if}\ z\rightarrow\infty.
    \end{cases}
\end{equation*}
Then
\[
    \mathscr{L}[\tilde{f}_1;s]
    =\begin{cases}
        O(s^{-2-\ve}),&\text{as}\ s\rightarrow 0^+;\\
        O(s^{-3}),&\text{as}\ s\rightarrow\infty.
    \end{cases}
\]
On the other hand, by the Mellin transform,
\begin{align}
    \log Q(-2s)
    &=\sum_{j\ge0}\log\left(1+\frac{s}{2^j}\right)\nonumber\\
    &=\frac{1}{2\pi i}\int_{(-\frac{1}{2})}
    \frac{\pi s^{-w}}{(1-2^w) w\sin \pi w}\dd w\nonumber\\
    &=\frac{(\log s)^2}{2\log 2} +\frac{\log s}{2}+
    \sum_{k\in\mathbb{Z}}q_ks^{-\chi_k}+O(|s|^{-1})\label{est-Q}
\end{align}
uniformly for $|s|\to\infty$ and $|\arg(s)|\le\pi-\ve$, where
$\chi_k:=2k\pi i/\log 2$,
\[
    q_0
    =\frac{\log 2}{12}+\frac{\pi^2}{6\log 2}
\]
and
\[
    q_k
    =\frac{1}{2k\sinh(2k\pi/\log 2)}\qquad(k\not=0).
\]
This asymptotic expansion, together with the Taylor expansion
\begin{align*}
    Q(-2s)
    =1+O(|s|),\qquad (|s|\to 0),
\end{align*}
gives rise to
\[
    \bar{\mathscr{L}}[\tilde{f}_1;s]
    =\begin{cases}
        O(s^{-2-\ve}),&\text{as}\ s\rightarrow 0^+;\\
        O(s^{-M}),&\text{as}\ s\rightarrow\infty,
    \end{cases}
\]
where $M>0$ is an arbitrary real number. Consequently, the Mellin
transform of $\bar{\mathscr{L}}[\tilde{f}_1;s]$, denoted by
$\mathscr{M}[\bar{\mathscr{L}};\omega]$, exists in the half-plane
$\Re(\omega)\ge 2+\ve$. Then by applying the Mellin transform to
(\ref{L-bar}), we obtain
\begin{equation*}
    \mathscr{M}[\bar{\mathscr{L}};\omega]
    =\frac{G_1(\omega)}{1-2^{2-\omega}},\qquad (\Re(\omega)>2),
\end{equation*}
where
\begin{align}\label{G1w}
    G_1(\omega)
    := \int_{0}^{\infty}
    \frac{s^{\omega-3}}{Q(-2s)}\dd{s}
    = \frac{\pi Q(2^{\omega-2})}{Q(1)\sin\pi\omega}
    = \frac{Q(2^{\omega-2})}{Q(1)}\,\Gamma(\omega)
    \Gamma(1-\omega),
\end{align}
for $\Re(\omega)>2$; see \cite{flajolet92a}.

\paragraph{Inverse Mellin and inverse Laplace transforms.} We
can now apply successively the inverse Mellin and then Laplace
transforms to derive the asymptotics of $\tilde{f}_1(z)$. Observe
that $G_1(\omega)$ has a simple pole at $\omega=2$. By (\ref{G1w})
or Proposition 5 in \cite{flajolet95a}, we obtain
\[
    | G_1(c+it)|
    =O\left(e^{-(\pi-\ve)| t|}\right),
\]
for large $| t|$ and $c\in\mathbb{R}$.
Then by the calculus of residues,
\[
    \bar{\mathscr{L}}[\tilde{f}_1;s]
    =\frac{1}{s^2}\log_2\frac{1}{s} + \frac{1}{s^2}\left(
    \frac12-c_1+\frac{1}{\log2}
    \sum_{k\in\mathbb{Z} \setminus\{0\}}G_1(2+\chi_k)
    s^{-\chi_k}\right)+O(|s|^{-1}),
\]
uniformly for $|s|\to 0$ and $|\arg(s)|\le\pi-\ve$. Using the
expansion
\[
    Q(-s)
    = 1+s +(|s|^2)\qquad(|s|\sim0),
\]
we see that
\[
    \mathscr{L}[\tilde{f}_1;s]
    =\frac{1+s}{s^2}\log_2\frac{1}{s} + \frac{1}{s^2}\left(
    \frac12-c_1+\frac{1}{\log2}
    \sum_{k\in\mathbb{Z} \setminus\{0\}}G_1(2+\chi_k)
    s^{-\chi_k}\right)+O(|s|^{-1}),
\]
uniformly for $|s|\to 0$ and $|\arg(s)|\le\pi-\ve$.

Finally, we consider the inverse Laplace transform. The following
simple result is very useful for our purposes.
\begin{prop} \label{prop-inv-laplace}
Let $\tilde{f}(z)$ be a function whose Laplace transform exists and
is analytic in $\mathbb{C}\setminus(-\infty,0]$. Assume that
\begin{equation}\label{est-laplace}
    \mathscr{L}[\tilde{f};s]
    = \begin{cases}
        O\left(|s|^{-\alpha}|\log|s+1||^{m}\right),\\
        cs^{-\omega}(-\log s)^m,\\
        o(|s|^{-\alpha}|\log|s+1||^{m}),
    \end{cases}
\end{equation}
uniformly for $|s|\to 0$ and $| \arg(s)|\le\pi-\ve$, where
$\alpha\in\mathbb{R}$, $\omega\in\mathbb{C}$ and $m=0,1,\dots$. If
$\mathscr{L}[\tilde{f};s]$ satisfies
\begin{align} \label{Lap-small}
    |\mathscr{L}[\tilde{f};s]|
    = O\left(|s|^{-1-\ve}\right),
\end{align}
as $|s|\to\infty$ in $| \arg(s)|\le\pi-\ve$, then
\[
    \tilde{f}(z)
    = \begin{cases}
        O\left(|z|^{\alpha-1}(\log |z|)^m\right),\\
        \displaystyle cz^{\omega-1}
        \sum_{0\le j\le m}\binom{m}{j}
        (\log z)^{m-j}\frac{\partial^j}
        {\partial \omega^j}\frac1{\Gamma(\omega)},\\
        o\left(|z|^{\alpha-1}(\log |z|)^m\right),
    \end{cases}
\]
respectively, where the $O$- and $o$-terms hold uniformly for
$|z|\to\infty$ and $|\arg(z)|\le\pi/2-\ve$.
\end{prop}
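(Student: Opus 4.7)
My plan is to recover $\tilde{f}(z)$ via Laplace inversion along a Hankel contour $H$ tightly wrapping $(-\infty,0]$. The hypothesis of analyticity in $\mathbb{C}\setminus(-\infty,0]$ together with the decay $|\mathscr{L}[\tilde{f};s]|=O(|s|^{-1-\ve})$ at infinity legitimize deforming the Bromwich line to $H$ and guarantee absolute convergence of
\[
    \tilde{f}(z)=\frac{1}{2\pi i}\int_H e^{sz}\mathscr{L}[\tilde{f};s]\dd s
\]
for $z$ in the sector $|\arg z|\le\pi/2-\ve$: on the straight pieces of $H$ (where $s=-\rho$) one has $|e^{sz}|=e^{-\rho\Re(z)}\le e^{-\rho|z|\sin\ve}$, which damps the integrand and makes the integral of Laplace type in $\rho$.

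For the exact middle case $\mathscr{L}[\tilde{f};s]=cs^{-\omega}(-\log s)^m$, I would invoke the classical Hankel representation
\[
    \frac{1}{2\pi i}\int_H e^{sz}s^{-\omega}\dd s=\frac{z^{\omega-1}}{\Gamma(\omega)},
\]
valid for $|\arg z|<\pi$ by the scaling $s=u/z$ when $z>0$ and analytic continuation. Differentiating $m$ times in $\omega$ pulls a factor $(-\log s)^m$ inside the integral and, by Leibniz's rule on the right-hand side, distributes the derivatives between $z^{\omega-1}$ and $1/\Gamma(\omega)$, yielding
\[
    \frac{1}{2\pi i}\int_H e^{sz}s^{-\omega}(-\log s)^m\dd s=\sum_{0\le j\le m}\binom{m}{j}z^{\omega-1}(\log z)^{m-j}\frac{\partial^j}{\partial\omega^j}\frac{1}{\Gamma(\omega)},
\]
which is precisely the stated closed form after multiplication by $c$.

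For the $O$- and $o$-estimates I would bound the Hankel integral directly. Take the small-circle portion of $H$ to have radius $|z|^{-1}$; its contribution is bounded by the small-$s$ hypothesis times the circumference, giving a negligible lower-order term. On the straight pieces, substitute $\rho=u/(|z|\sin\ve)$, converting the tail into an integral of the shape $(|z|\sin\ve)^{\alpha-1}\int e^{-u}u^{-\alpha}(\textrm{log-factor})\dd u$. The log-factor from the hypothesis is $O((\log|z|)^m)$ uniformly on the integration range, and the remaining integral against $e^{-u}u^{-\alpha}$ converges absolutely; this delivers the announced $O(|z|^{\alpha-1}(\log|z|)^m)$ bound uniformly in the sector. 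The $o$-case follows by the same splitting together with a standard $\eta$-argument: first fix a cutoff $R$ large enough that the tail $u>R$ contributes at most $\eta|z|^{\alpha-1}(\log|z|)^m$, then invoke the pointwise $o$-hypothesis on the compact range $|u|\le R$ to force the main part below $\eta|z|^{\alpha-1}(\log|z|)^m$ for $|z|$ sufficiently large. The principal technical obstacle is maintaining uniformity in the sector $|\arg z|\le\pi/2-\ve$ (which forces the $\sin\ve$ factors through every estimate) and coordinating the two-parameter splitting in the $o$-case without losing sharpness in the quantifiers.
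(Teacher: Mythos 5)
Your strategy is the same as the paper's (Laplace inversion along a Hankel contour with radius $1/|z|$, split into far and near parts, Gamma-function Hankel formula for the middle case), and your middle-case computation via $\omega$-differentiation of $\frac{1}{2\pi i}\int_{\mathcal H}e^{sz}s^{-\omega}\dd s=z^{\omega-1}/\Gamma(\omega)$ is correct and supplies detail the paper omits. However, there is a concrete gap in how you lay down the contour: you take the straight pieces of $\mathcal H$ to lie \emph{on} the negative real axis, $s=-\rho$, i.e.\ $\arg(s)=\pi$. Both hypotheses on $\mathscr L[\tilde f;s]$ — the small-$s$ estimate (\ref{est-laplace}) and the large-$s$ bound (\ref{Lap-small}) — are stated only for $|\arg(s)|\le\pi-\ve$, and $\mathscr L$ is assumed analytic only in $\mathbb C\setminus(-\infty,0]$, so along $(-\infty,0]$ it may have a branch cut, may blow up, or may simply be undefined. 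Consequently neither the deformation of the Bromwich line past angle $\pi-\ve$ nor the pointwise bound "$\mathscr L$ times circumference" that you invoke on the straight pieces is justified by the hypotheses.

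The fix, which the paper uses, is to take the rays of $\mathcal H$ at angles $\pm(\pi-\ve)$ from the positive real axis (shifted vertically by $\pm i/|z|$), so that the contour stays inside the sector where (\ref{est-laplace}) and (\ref{Lap-small}) apply. One then obtains $|e^{sz}|\le e^{-c|z||t|}$ only for $z$ with $|\arg(z)|+\ve<\pi/2$, so the uniformity is over a slightly smaller sector; this is harmless under the paper's stated convention that $\ve$ denotes an arbitrarily small constant whose value may change from line to line, but it is a constraint you would need to track explicitly. With this adjustment to the contour, your splitting at $|s|=1/|z|$, the rescaling $\rho\mapsto u/|z|$, and your $\eta$-argument for the $o$-case all go through along the lines of the paper's proof.
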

\begin{proof}
Let $\tilde{\mathscr{L}}(s)=\mathscr{L}[\tilde{f};s]$. Then by
the inverse Laplace transform,
\[
    \tilde{f}(z)
    = \frac{1}{2\pi i}\int_{(1)}e^{zs}\tilde{\mathscr{L}}(s)\dd{s}
    = \frac{1}{2\pi i}\int_{\mathcal{H}}e^{zs}
    \tilde{\mathscr{L}}(s)\dd{s},
\]
where $\mathcal{H}$ is the Hankel contour consisting of the two
rays $te^{\pm i\ve}\pm i/|z|,-\infty<t\le 0$ and the semicircle
$\exp(i\varphi)/|z|,-\pi/2\le\varphi\le\pi/2$; see
Figure~\ref{fg-calH}.

\begin{figure}[!h]
\begin{center}
\begin{tikzpicture}
\draw[thick] (0,0) -- (-5,0.5);
\draw[thick] (0,0) -- (-5,-0.5);
\draw[thick] (0,0.2) -- (-5,0.7);
\draw[thick] (0,-0.2) -- (-5,-0.7);
\draw (0.23,-0.15) -- (0.23,-0.25);
\draw[-latex] (-5,0) -- (1.3,0) node[right] {\small $\Re(s)$};
\draw[-latex] (0,-1.3) -- (0,1.3) node[above] {\small $\Im(s)$};
\draw[-latex] (-0.2,-0.2) -- (0,-0.2);
\draw[-latex] (0.43,-0.2) -- (0.23,-0.2);
\draw[thick] (0mm,-2mm) arc (-90:90:2mm);
\draw (-19mm,0mm) arc (180:215:3mm);
\draw (-2.3,-0.1) node {\small $\ve$} ;
\draw (-3,0.8) node {\small $\mathcal{H}$} ;
\draw (0.2,-0.6) node {\small $\frac{1}{|z|}$} ;
\end{tikzpicture}
\end{center}
\caption{\emph{The contour $\mathcal{H}$.}}
\label{fg-calH}
\end{figure}
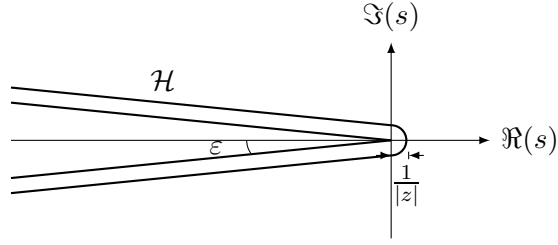

Assume from now on $|z|$ is sufficiently large and lies in the
sector with $|\arg(z)|\le\pi/2 -\ve$. We prove only the $O$-case,
the other two cases being similar. For simplicity, we consider
only the case $m=0$, the other cases being easily extended.

We split the above integral along $\mathcal{H}$ into two parts
\[
    \frac{1}{2\pi i}\int_{\mathcal{H}}e^{zs}
    \tilde{\mathscr{L}}(s)\dd s
    = \frac{1}{2\pi i}\int_{{\mathcal H}_{>}}
    e^{zs} \tilde{\mathscr{L}}(s)\dd{s}
    +\frac{1}{2\pi i}\int_{\mathcal{H}_\supset}e^{zs}
    \tilde{\mathscr{L}}(s)\dd{s},
\]
where $\mathcal{H}_>$ comprises the two rays $te^{i\ve}\pm i/|
z|,-\infty<t\le -T$ with $T>1$ a fixed constant
and $\mathcal{H}_\supset$ represents the remaining contour.

The integral along $\mathcal{H}_>$ is easily estimated
\begin{align*}
    \frac{1}{2\pi i}\int_{{\mathcal H}_>}
    e^{zs}\tilde{\mathscr{L}}(s)\dd{s}
    &= O\left(\int_{-\infty}^{-T}e^{\Re(|z|e^{i\arg(z)}
    (te^{i\ve}+i/|z|))}|t|^{-1-\ve}\dd t\right) \\
    &= O\left(\int_T^{\infty} t^{-1-\ve}
    e^{-|z|t\cos(\arg(z)+\ve)}\dd t\right)\\
    &= O\left(|z|^{\ve}e^{-c|z| T}\right),
\end{align*}
the $O$-term holding uniformly for $|z|\to\infty$ provided that
$|\arg(z)|+\ve<\pi/2$, where $c>0$ is a suitable constant.

For the second integral, we use (\ref{est-laplace}). Then the
integral along the semicircle is bounded as follows.
\[
    \frac{1}{2\pi|z|}\int_{-\pi/2}^{\pi/2}
    e^{ze^{i\theta}/|z|+i\theta}
    \tilde{\mathscr{L}}(e^{i\theta}/|z|)\dd\theta
    = O\left(|z|^{\alpha-1}\right),
\]
uniformly for $|z|\to\infty$. For the remaining part $t\pm i/|
z|,-T<t\le 0$, we have
\begin{align*}
    \frac{1}{2\pi i}\int_{-T}^{0} e^{z(t\pm i/|z|)}
    \tilde{\mathscr{L}}(t\pm i/|z|)\dd t
    &= O\left(|z|^{\alpha}\int_{-T}^{0}
    \frac{e^{c|z| t}}{(|z|^2t^2+1)^{\alpha/2}}
    \dd t\right)\\
    &= O\left(|z|^{\alpha-1}\int_{0}^{\infty}
    \frac{e^{-cu}}{(u^2+1)^{\alpha/2}}\dd u\right)\\
    &= O\left(|z|^{\alpha-1}\right),
\end{align*}
uniformly for $|z|\to\infty$, where $c>0$ is a suitable constant.
This completes the proof.
\end{proof}

Note that the inverse Laplace transform of $s^{-2}\log(1/s)$ is
$z\log z-(1-\gamma)z$. This, together with a combined use of
Proposition~\ref{prop-inv-laplace}, leads to (\ref{tf1z-asymp}).

The justification of the estimate (\ref{Lap-small}) is easily
performed by using the relation (\ref{LT-OGF}) below.

\paragraph{The Flajolet-Richmond approach \cite{flajolet92a}.}
Instead of the Poisson generating function, this approach starts
from the ordinary generating function $A(z) := \sum_n \mu_n z^n$.

\begin{itemize}

\item[--] Then the Euler transform\footnote{For a better comparison
with the approach we use, our $\hat{A}$ differs from the usual Euler
transform by a factor of $s$.}
\[
    \hat{A}(s)
    := \frac 1{s+1} A\left(\frac 1{s+1}\right)
\]
satisfies
\[
    (s+1) \hat{A}(s)
    = 4\hat{A}(2s)+s^{-2},
\]
identical to (\ref{rel-laplace}).

\item[--] The normalized function $\bar{A}(s) := \hat{A}(s)/Q(-s)$
satisfies
\[
    \bar{A}(s)
    = 4\bar{A}(2s)+\frac1{s^2Q(-2s)},
\]
again identical to (\ref{L-bar}).

\item[--] The Mellin transform of $\bar{A}$ satisfies
$(\Re(\omega)>2)$
\[
    \mathscr{M}[\bar{A};\omega]
    = \frac{G_1(\omega)}{1-2^{2-\omega}},
\]
where $G_1(\omega)$ is as defined in (\ref{G1w}).
\end{itemize}

Then invert the process by considering first the Mellin inversion,
deriving asymptotics of
\[
    \bar{A}(s) = \frac1{2\pi i} \int_{(5/2)} s^{-\omega}
    \frac{G_1(\omega)}{1-2^{2-\omega}} \dd \omega,
\]
as $s\to0$ in $\mathbb{C}$. Then deduce asymptotics of
\[
    A(z)= \frac1{z}\hat{A}\left(\frac1z-1\right),
\]
as $z\to1$. Finally, apply singularity analysis (see
\cite{flajolet90a}) to conclude the asymptotics of $\mu_n$.

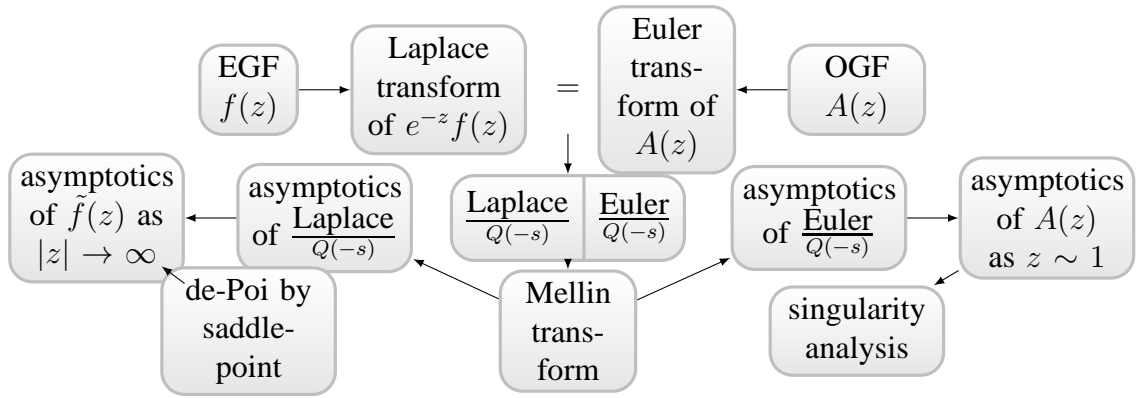
\begin{figure}[!h]
\begin{center}
\begin{tikzpicture}[
    s1/.style={
    rectangle,
    rounded corners=3mm,
    minimum size=6mm,
    very thick,
    draw=white!50!black!50,
    top color=white,
    bottom color=white!50!black!20,
}]%
\node [s1, text width=1cm, text centered] (n1) at (0,-.3) {EGF
$f(z)$};
\node [s1, text width=2cm, text centered] (n2) at (2.5,-.3)%
{Laplace transform of $e^{-z}f(z)$};
\path (n1) edge[-latex] (n2);
\node [inner sep=.5cm] (n) at (4.2,-.3) {$=$};
\node [s1, text width=1.5cm, text centered] (n3) at (5.5,-.3)
{Euler transform of $A(z)$};%
\node [s1, text width=1.5cm, text centered] (n4) at (8,-.3) {OGF
$A(z)$};
\path (n4) edge[-latex] (n3);%
\node [s1, text width=.8cm, text centered, rectangle split,
rectangle split parts=2,rotate=90] (n5) at (4.2,-2)
{\rotatebox{270}{$\frac{\mbox{Laplace}}{Q(-s)}$}
\nodepart{second}
\rotatebox{270}{$\frac{\mbox{Euler}}{Q(-s)}$}};
\path (n) edge[-latex] (n5);
\node [s1, text width=1.5cm, text centered] (n6) at (4.2,-3.5)
{Mellin transform};
\path (n5) edge[-latex] (n6);
\node [s1, text width=2cm, text centered] (n7) at (1,-2)
{asymptotics of $\frac{\mbox{Laplace}}{Q(-s)}$};
\path (n6) edge[-latex] (n7);
\node [s1, text width=2cm, text centered] (n8) at (-2,-2)
{asymptotics of $\tilde{f}(z)$ as $|z|\rightarrow \infty$};
\path (n7) edge[-latex] (n8);
\node [s1, text width=2cm, text centered] (n9) at (0,-3.5)
{de-Poi by saddle-point};
\path (n8) edge[-latex] (n9);
\node [s1, text width=2cm, text centered] (n10) at (7.5,-2)
{asymptotics of $\frac{\mbox{Euler}}{Q(-s)}$};
\path (n6) edge[-latex] (n10);
\node [s1, text width=2cm, text centered] (n11) at (10.5,-2)
{asymptotics of $A(z)$ as $z\sim 1$};
\path (n10) edge[-latex] (n11);
\node [s1, text width=2cm, text centered] (n12) at (8,-3.5)
{singularity analysis};
\path (n11) edge[-latex] (n12);
\end{tikzpicture}
\end{center}
\caption{\emph{A diagrammatic comparison of the major steps used in
the Laplace-Mellin (left-half) approach and the Flajolet-Richmond
(right-half) approach. Here EGF denotes ``exponential generating
function", OGF stands for ``ordinary generating function'' and
de-Poi is the abbreviation for de-Poissonization.
}}\label{fig-egf-ogf}
\end{figure}

The crucial reason why the two approaches are identical at certain
steps is that the Laplace transform of a Poisson generating function
is essentially equal to the Euler transform of an ordinary
generating function; or formally,
\begin{align}
    \int_0^\infty e^{-sz}\sum_{n\ge0} \frac{a_n}{n!}\,z^n \dd z
    &= \sum_{n\ge0} a_n (s+1)^{-n-1} \nonumber \\
    &= \frac1{s+1}A\left(\frac1{s+1}\right). \label{LT-OGF}
\end{align}
Thus the simple result in Proposition~\ref{prop-inv-laplace} closely
parallels that in singularity analysis. While identical at certain
steps, the two approaches diverge in their final treatment of the
coefficients, and the distinction here is typically that between the
saddle-point method and the singularity analysis, a situation
reminiscent of the use before and after Lagrange's inversion
formula; see for instance \cite{flajolet2009a}.

The relation (\ref{LT-OGF}) implies that the order estimate
(\ref{Lap-small}) for the Laplace transform at infinity can be
easily justified for all the generating functions we consider in
this paper since $A(0)=0$, implying that $A(z) = O(|z|)$ as
$|z|\to0$.

This comparison also suggests the possibility of developing
de-Poissonization tools by singularity analysis, which will be
investigated in details elsewhere.

\subsection{Variance of the internal path-length}

In this section, we apply the Laplace-Mellin-de-Poissonization
approach to the Poissonized variance with correction
\[
    \tilde{V}(z)
    := \tilde{f}_2(z)-\tilde{f}_1(z)^2-z\tilde{f}'_1(z)^2,
\]
aiming at proving Theorem~\ref{thm-KPS}. The starting point of
focusing on $\tilde{V}$ instead of on $\tilde{f}_2$ removes all
heavy cancellations involved when dealing with the variance, a
key step differing from all previous approaches.

\paragraph{Laplace and Mellin transform.} The following lemma will
be useful.
\begin{lmm} If \label{lmm-Vz-dfe}
\[
    \left\{\begin{array}{l}
        \tilde{f}_1(z)+\tilde{f}_1'(z)
        =2\tilde{f}_1(z/2)+\tilde{h}_1(z),\\
        \tilde{f}_2(z)+\tilde{f}_2'(z)
        =2\tilde{f}_2(z/2)+\tilde{h}_2(z),
    \end{array}\right.
\]
where all functions are entire with
$\tilde{f}_1(0)=\tilde{f}_2(0)=0$, then the function
$\tilde{V}(z) := \tilde{f}_2(z)
-\tilde{f}_1(z)^2-z\tilde{f}_1'(z)^2$ satisfies
\[
    \tilde{V}(z)+\tilde{V}'(z)
    =2\tilde{V}(z/2)+\tilde{g}(z),
\]
with $\tilde{V}(0)=0$, where
\[
    \tilde{g}(z)
    = z\tilde{f}_1''(z)^2 +\tilde{h}_2(z)-\tilde{h}_1(z)^2
    -z\tilde{h}_1'(z)^2 - 4\tilde{h}_1(z)\tilde{f}_1(z/2)
    -2z\tilde{h}_1'(z)\tilde{f}_1'(z/2)-2\tilde{f}_1(z/2)^2.
\]
\end{lmm}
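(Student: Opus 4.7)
The proof is essentially a direct algebraic verification, so the plan is to carry out the computation in a structured way that exposes why the specific correction term $z\tilde{f}_1'(z)^2$ produces exactly the claimed right-hand side. First I would expand
\[
    \tilde{V}(z)+\tilde{V}'(z)-2\tilde{V}(z/2)
\]
into three blocks, corresponding to the three summands defining $\tilde{V}$: the $\tilde{f}_2$-block, the $\tilde{f}_1^2$-block, and the $z\tilde{f}_1'^2$-block. The $\tilde{f}_2$-block collapses immediately to $\tilde{h}_2(z)$ using the second hypothesis, so all the work lies in the remaining two blocks.

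For the $\tilde{f}_1^2$-block, my plan is to use the identity $(\tilde{f}_1{}^2)'=2\tilde{f}_1\tilde{f}_1'$ and then complete the square via
\[
    \tilde{f}_1(z)^2+2\tilde{f}_1(z)\tilde{f}_1'(z)
    =\bigl(\tilde{f}_1(z)+\tilde{f}_1'(z)\bigr)^2-\tilde{f}_1'(z)^2.
\]
Plugging the first functional equation into the squared factor converts this into $4\tilde{f}_1(z/2)^2+4\tilde{f}_1(z/2)\tilde{h}_1(z)+\tilde{h}_1(z)^2-\tilde{f}_1'(z)^2$. Subtracting $2\tilde{f}_1(z/2)^2$ (the contribution from $-2\tilde{V}(z/2)$) leaves the block in a form containing exactly the terms $-2\tilde{f}_1(z/2)^2$, $-4\tilde{h}_1\tilde{f}_1(z/2)$, $-\tilde{h}_1^2$, and a stray $+\tilde{f}_1'(z)^2$ that is destined to cancel against the third block.

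For the $z\tilde{f}_1'^2$-block, I would first differentiate the first functional equation once to get
\[
    \tilde{f}_1'(z)+\tilde{f}_1''(z)=\tilde{f}_1'(z/2)+\tilde{h}_1'(z),
\]
and then apply the same complete-the-square trick:
\[
    \tilde{f}_1'(z)^2+2\tilde{f}_1'(z)\tilde{f}_1''(z)
    =\bigl(\tilde{f}_1'(z/2)+\tilde{h}_1'(z)\bigr)^2-\tilde{f}_1''(z)^2.
\]
Multiplying by $z$, adding the lone $\tilde{f}_1'(z)^2$ produced by differentiating $z\tilde{f}_1'(z)^2$, and subtracting $z\tilde{f}_1'(z/2)^2$ (from $-2\tilde{V}(z/2)$) yields $-2z\tilde{h}_1'\tilde{f}_1'(z/2)-z\tilde{h}_1'^2+z\tilde{f}_1''(z)^2+\tilde{f}_1'(z)^2$. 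The last $+\tilde{f}_1'(z)^2$ cancels precisely with the stray term from the $\tilde{f}_1^2$-block, and the surviving $+z\tilde{f}_1''(z)^2$ is exactly the first summand in the stated $\tilde{g}$.

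Assembling the three blocks gives the claimed expression for $\tilde{g}$, and the initial condition $\tilde{V}(0)=0$ follows from $\tilde{f}_1(0)=\tilde{f}_2(0)=0$ together with $0\cdot\tilde{f}_1'(0)^2=0$. There is no genuine obstacle: the only risk is bookkeeping, and the role of the two applications of the square identity (once on $\tilde{f}_1+\tilde{f}_1'$, once on $\tilde{f}_1'+\tilde{f}_1''$) makes the cancellations transparent, explaining in passing \emph{why} the correction term in $\tilde{V}$ must be $z\tilde{f}_1'(z)^2$ — it is precisely what allows the two stray $\tilde{f}_1'(z)^2$ terms to annihilate one another so that the inhomogeneity $\tilde{g}$ depends only on $\tilde{h}_1,\tilde{h}_2,\tilde{f}_1(z/2),\tilde{f}_1'(z/2)$ and the lower-order derivative $\tilde{f}_1''$.
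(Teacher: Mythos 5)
Your proof is correct and takes essentially the same route as the paper, which simply declares the verification ``straightforward and omitted''; the complete-the-square organization is a clean way to expose why the correction term $z\tilde{f}_1'(z)^2$ is the right one. One small sign slip in the write-up: with the minus sign from $-z\tilde{f}_1'(z)^2$ carried through, the third block contributes $-2z\tilde{h}_1'(z)\tilde{f}_1'(z/2)-z\tilde{h}_1'(z)^2+z\tilde{f}_1''(z)^2-\tilde{f}_1'(z)^2$ (not $+\tilde{f}_1'(z)^2$), which is precisely what allows it to cancel the $+\tilde{f}_1'(z)^2$ left over from the $\tilde{f}_1^2$-block, as you intended.
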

\begin{proof}
Straightforward and omitted.
\end{proof}

By using the differential-functional equations (\ref{poi-mean}) and
(\ref{poi-2nd-mom}) for $\tilde{f}_1(z)$ and $\tilde{f}_2(z)$, we
see, by Lemma~\ref{lmm-Vz-dfe}, that
\begin{equation}\label{poi-var-V}
    \tilde{V}(z)+\tilde{V}'(z)
    = 2\tilde{V}(z/2)+z\tilde{f}''_1(z)^2,
\end{equation}
with $\tilde{V}(0)=0$.

Before applying the integral transforms, we need rough estimates of
$\tilde{V}(z)$ near $z=0$ and $z=\infty$. We have
\begin{equation}\label{asym-V}
    \tilde{V}(z)
    = \begin{cases}
        O\left(z^2\right),&\text{as}\ z\rightarrow 0^+;\\
        O(z^{1+\ve}),&\text{as}\ z\rightarrow\infty.
    \end{cases}
\end{equation}
These estimates follow from
\begin{align}\label{zf1d22}
    z\tilde{f}''_1(z)^2
    = \begin{cases}
        O(|z|),&\text{as}\ |z|\rightarrow 0;\\
        O(|z|^{-1}),&\text{as}\ |z|\rightarrow\infty,
    \end{cases}
\end{align}
which in turn result from $X_0=X_1=0$ and (\ref{tf1z-asymp}) (by the
proof of condition \textbf{(I)} of Proposition~\ref{prop-dst-tr}).
Indeed, the proof there shows that the same bounds hold uniformly
for $z\in\mathbb{C}$ with $|\arg(z)|\le\pi/2-\ve$.

We now apply the Laplace transform to both sides of (\ref{poi-var-V}).
First, observe that the Laplace transform of $\tilde{V}(z)$ exists
and is analytic in $\mathbb{C}\setminus(-\infty,0]$. Then,
by (\ref{poi-var-V}),
\[
   (s+1)\mathscr{L}[\tilde{V};s]
   = 4\mathscr{L}[\tilde{V};2s]+\tilde{g}^\star(s),
\]
where $\tilde{g}^\star(s):=\mathscr{L}[z\tilde{f}''_1;s]$.
Next the normalized Laplace transform
\[
    \bar{\mathscr{L}}[\tilde{V};s]
    := \frac{\mathscr{L}[\tilde{V};s]}{Q(-s)}
\]
satisfies
\[
    \bar{\mathscr{L}}[\tilde{V};s]
    = 4\bar{\mathscr{L}}[\tilde{V};2s]
    +\frac{\tilde{g}^\star(s)}{Q(-2s)}.
\]
By (\ref{asym-V}), we obtain
\[
    \mathscr{L}[\tilde{V};s]
    = \begin{cases}
        O(s^{-2-\ve}),&\text{as}\ s\rightarrow 0^+;\\
        O(s^{-3}),&\text{as}\ s\rightarrow\infty.
    \end{cases}
\]
From this and the asymptotic expansion (\ref{est-Q}) of $Q(-2s)$, it
follows that the Mellin transform of $\bar{\mathscr{L}}
[\tilde{V};s]$ exists in the half-plane $\Re(\omega)\ge 2+\ve$.
Consequently,
\[
    \mathscr{M}[\bar{\mathscr{L}}[\tilde{V};s];\omega]
    = \frac{G_2(\omega)}{1-2^{2-\omega}},\qquad(\Re(\omega)>2),
\]
where
\begin{align}\label{G2w-di}
    G_2(\omega)
    := \mathscr{M}\left[\frac{\tilde{g}^\star(s)}
    {Q(-2s)};\omega\right]
    = \int_{0}^{\infty}\frac{s^{\omega-1}}{Q(-2s)}
    \int_{0}^{\infty}e^{-zs}z\tilde{f}''_1(z)^2\dd z\dd{s}.
\end{align}
By (\ref{t-f1z}), we have
\[
    z\tilde{f}''_1(z)^2
    = Q_\infty^2\sum_{h,\ell\ge 0}
    \frac{1}{Q_hQ_\ell2^{h+\ell}}ze^{-z/2^h-z/2^\ell}.
\]
Substituting this and the partial fraction expansion
\[
    \frac1{Q(-2s)}
    = \frac1{Q_\infty}\sum_{j\ge0} \frac{(-1)^j2^{-\binom{j}{2}}}
    {Q_j(s+2^{-j})},
\]
into (\ref{G2w-di}), we obtain (\ref{G2w}).

\paragraph{Inverse Mellin and inverse Laplace transforms.}
For the Mellin inversion, we need more precise analytic properties
of $G_2(\omega)$. By (\ref{zf1d22}), we deduce that the Laplace
transform $\tilde{g}^\star(s)$ of $z\tilde{f}_1''(z)^2$ satisfies
\[
    \tilde{g}^\star(s)
    = \begin{cases}
        O(|\log s|),&\text{as}\ |s|\to 0;\\
        O(|s|^{-2}),&\text{as}\ |s|\to\infty
    \end{cases}
\]
uniformly in the cone $|\arg(s)|\le\pi-\ve$. Thus, by the asymptotic
expansion (\ref{est-Q}) for $Q(-2s)$ and Proposition 5 in
\cite{flajolet95a}, we have
\[
    |G_2(c+it)|
    = O\left(e^{-(\pi-\ve)|t|}\right),
\]
for large $|t|$ and $c>0$. Also the Mellin transform $G_2$ of
$\tilde{g}^\star(s)/Q(-2s)$ exists in the half-plane
$\Re(\omega)>0$. Consequently, by standard calculus of residues,
\[
    \bar{\mathscr{L}}[\tilde{V};s]
    = \frac{1}{\log 2}\sum_{k\in\mathbb{Z}}
    G_2(2+\chi_k)s^{-2-\chi_k}+O(|s|^{-\ve}),
\]
uniformly for $|s|\to 0$ and $|\arg(s)|\le\pi-\ve$. This in turn
yields the following expansion for $\mathscr{L}[\tilde{V};s]$
\[
    \mathscr{L}[\tilde{V};s]
    = \frac{1}{\log 2}\sum_{k\in\mathbb{Z}}G_2(2+\chi_k)s^{-2-\chi_k}
    +\frac{1}{\log 2}\sum_{k\in\mathbb{Z}}G_2(2+\chi_k)s^{-1-\chi_k}
    +O(|s|^{-\ve}),
\]
again uniformly for $|s|\to 0$ and $|\arg(s)|\le\pi-\ve$.

Finally, standard Laplace inversion gives
\begin{equation}\label{asym-variance}
    \tilde{V}(z)
    = \frac{z}{\log 2}\sum_{k\in\mathbb{Z}}\frac{G_2(2+\chi_k)}
    {\Gamma(2+\chi_k)}z^{\chi_k}+\frac{1}{\log 2}
    \sum_{k\in\mathbb{Z}}\frac{G_2(1+\chi_k)}{\Gamma(1+\chi_k)}
    z^{\chi_k}+O(|z|^{\ve-1}),
\end{equation}
uniformly for $|z|\to\infty$ and $|\arg(z)|\le\pi/2-\ve$.

Since $\tilde{f}_2(z) =\tilde{V}(z)+\tilde{f}_1(z)^2
+z\tilde{f}_1'(z)^2$, we see from (\ref{asym-variance}) and
(\ref{tf1z-asymp}) that
\[
    \tilde{f}_2(z) \asymp \tilde{f}_1(z)^2 \asymp |z|^2
    \log^2|z|\qquad(|\arg(z)|\le\pi/2-\ve).
\]
This proves Proposition~\ref{prop-PC-mean-var} and
Theorem~\ref{thm-KPS} by straightforward expansion. More refined
calculations give
\[
    \mathbb{V}(X_n)
    = \tilde{V}(n) -\frac{n}{2}\tilde{V}''(n)
    -\frac{n^2}{2}\tilde{f}''_1(n)^2 +O(n^{-1}),
\]
the two terms following $\tilde{V}(n)$ being both $O(1)$ and
periodic in nature. It is possible to further extend the same
idea and derive a full asymptotic expansion, which has also
its identity nature; details will be presented in a future paper.

\section{Bucket Digital Search Trees}

In this section, we extend the same approach to bucket digital
search trees ($b$-DSTs) in which each node can hold up to $b$ keys.
The construction rule is the same as DSTs, except that keys keep
staying in a node as long as its capacity remains less than $b$; see
Figure~\ref{fig-bdst} for a simple example with $b=2$. DSTs
correspond to $b=1$.

Note that when $b\ge2$ we can distinguish two different types of
total path-length: the total path-length of all keys (summing the
distance between each key to the root over all keys), which will be
referred to as the \emph{total key-wise path-length} (KPL) and the
total path-length of all nodes (summing the distance between each
node to the root over all nodes, regardless of the number of keys in
each node), referred to as the \emph{total node-wise path-length}
(NPL). When $b=1$ the two total path-lengths coincide. For
simplicity, we will use KPL and NPL, dropping the collective
adjective ``total''. While the expected values of both TPLs are of
order $n\log n$ under the same independent Bernoulli model, their
variances surprisingly turn out to exhibit very different behavior;
see Table~\ref{tb-all-pl}.

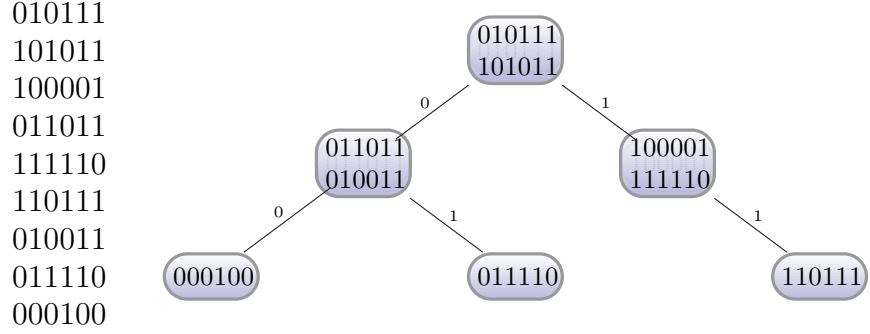
\begin{figure}[!h]
\begin{center}
\begin{tikzpicture}[
    s1/.style={
    rectangle,
    rounded corners=3mm,
    minimum size=6mm,
    very thick,
    draw=white!50!black!80,
    top color=white,
    bottom color=blue!50!black!30
}]
\path node at (1,0.5) {$010111$};%
\path node at (1,0) {$101011$};%
\path node at (1,-.5) {$100001$};%
\path node at (1,-1) {$011011$};%
\path node at(1,-1.5) {$111110$};%
\path node at (1,-2) {$110111$};%
\path node at (1,-2.5) {$010011$};%
\path node at (1,-3) {$011110$};%
\path node at (1,-3.5) {$000100$};%
\footnotesize%
\path node(a) at (7,0) [s1, text width=1cm, text centered]  {$010111$} ;%
\path node(b) at (7,0) [s1, text width=1cm, text centered]
{$010111$ $101011$};%
\path node(c) at (9,-1.5) [s1, text width=1cm, text centered] {$100001$};%
\path node(d) at (5,-1.5) [s1, text width=1cm, text centered] {$011011$};%
\path node(e) at (9,-1.5) [s1, text width=1cm, text centered]
{$100001$ $111110$};%
\path node(f) at (11,-3) [s1, text width=1cm, text centered] {$110111$};%
\path node(g) at (5,-1.5) [s1, text width=1cm, text centered]
{$011011$ $010011$};%
\path node(h) at (7,-3) [s1, text width=1cm, text centered] {$011110$};%
\path node(i) at (3,-3) [s1, text width=1cm, text centered] {$000100$};%
\path [draw,-,black!90] (b) -- (c) node[above,pos=.6,black]{{\tiny $1$}};%
\path [draw,-,black!90] (b) -- (d) node[above,pos=.6,black]{{\tiny $0$}};%
\path [draw,-,black!90] (e) -- (f) node[above,pos=.6,black]{{\tiny $1$}};%
\path [draw,-,black!90] (g) -- (h) node[above,pos=.6,black]{{\tiny $1$}};%
\path [draw,-,black!90] (d) -- (i) node[above,pos=.6,black]{{\tiny $0$}};%
\end{tikzpicture}
\end{center}
\caption{\emph{A $2$-DST with nine keys. The total key-wise
path-length is equal to $4\times 1+3\times 2=10$ and the total
node-wise path-length equals $2\times1 + 3\times 2 =
8$.}}\label{fig-bdst}
\end{figure}

\subsection{Key-wise path-length (KPL)}

We assume the same independent Bernoulli model for the input
strings. Let $X_n$ denote the KPL in a random $b$-DST built from $n$
random stings. Then by definition and the independence model
assumption
\begin{align}\label{bdst-Xn-rr}
    X_{n+b}
    \stackrel{d}{=}X_{B_n}+X_{n-B_n}^{*}+n,\qquad (n\ge 0)
\end{align}
with the initial conditions $X_0=\cdots=X_{b-1}=0$. Here
$B_n\sim\text{Binomial}(n,1/2), X_n\stackrel{d}{=}X_n^{*}$, and
$X_n,X_n^{*},B_n$ are independent.

\paragraph{Known and new results.} Hubalek \cite{hubalek00a} showed,
by the Flajolet-Richmond approach, that the mean satisfies
\[
    \mathbb{E}(X_n)
    = (n+b)\log_2n + n\left(c_2+\varpi_3(\log_2n)\right)
    +c_3 + \varpi_4(\log_2n) +O\left(n^{-1}\log n\right),
\]
where $c_2, c_3$ are effectively computable constants and $\varpi_3$
and $\varpi_4$ are very smooth periodic functions. He also proved
that the variance is asymptotically linear
\[
    \mathbb{V}(X_n)
    = n\left(C_h + \varpi_h(\log_2n)\right)+O((\log n)^2),
\]
where $C_h$ is expressed in terms of a very long, involved
expression and $\varpi_h$ is a periodic function.

We improve this estimate by deriving a much simpler expression for
the periodic function, including its average value $C_h$. To state
our result, we define the following functions. Let
\begin{align}\label{gz-var}
\begin{split}
    \tilde{g}(z)
    &:= \left(\sum_{0\le j\le b} \binom{b}{j}
    \tilde{f}_1^{(j)}(z) \right)^2 + z
    \left(\sum_{0\le j\le b} \binom{b}{j}
    \tilde{f}_1^{(j+1)}(z) \right)^2 \\ &\qquad
    - \sum_{0\le j\le b}
    \binom{b}{j} \left(\left(\tilde{f}_1^2(z)\right)^{(j)}
    +\left(z\tilde{f}_1'(z)^2\right)^{(j)}\right).
\end{split}
\end{align}
It is easily seen that $\tilde{g}(z)$ is of the form
\begin{equation}\label{form-g2}
    \tilde{g}(z)
    = \sum_{2\le i_1,i_2\le b}\tilde{g}_{i_1,i_2}
    \tilde{f}_1^{(i_1)}(z)\tilde{f}_1^{(i_2)}(z)
    +z\sum_{2\le i_1,i_2\le b+1}\tilde{g}'_{i_1,i_2}
    \tilde{f}_1^{(i_1)}(z)\tilde{f}_1^{(i_2)}(z),
\end{equation}
where $\tilde{g}_{i_1,i_2},\tilde{g}'_{i_1,i_2}\ge 0$ are given
explicitly by
\begin{align*}
    \tilde{g}_{i_1,i_2}
    &= \binom{b}{i_1}\binom{b}{i_2}
    -\binom{b}{i_1}\binom{b-i_1}{i_2}
    -(b-i_1+1)\binom{b}{i_1-1}\binom{b-i_1}{i_2-1},\\
    \tilde{g}'_{i_1,i_2}
    &= \binom{b}{i_1-1}\binom{b}{i_2-1}
    -\binom{b}{i_1-1}\binom{b-i_1+1}{i_2-1},
\end{align*}
both coefficients being symmetric in $i_1$ and $i_2$. Define
\[
    G_2(\omega)
    = \int_{0}^{\infty}\frac{s^{\omega-1}}{Q(-2s)^b}
    \int_{0}^{\infty}e^{-zs}\tilde{g}(z)\dd z\dd{s},
\]
which is well-defined for $\Re(\omega)>0$, as we will see later.

\begin{thm} The variance of the total key-wise path-length of random
$b$-DSTs of $n$ strings satisfies
\begin{align} \label{var-bdst-TKPL}
    \mathbb{V}(X_n)
    = n \left(C_h + \varpi_h(\log_2n)\right) + O(1),
\end{align}
where
\[
    C_h
    = \frac{G_2(2)}{\log 2}
    = \frac{1}{\log 2}\int_{0}^{\infty}\frac{s}{Q(-2s)^b}
    \int_{0}^{\infty}e^{-zs}\tilde{g}(z)\dd z\dd{s},
\]
and
\[
    \varpi_h(t)
    = \frac1{\log 2}\sum_{k\in \mathbb{Z}\setminus\{0\}}
    \frac{G_2(2+\chi_k)}{\Gamma(2+\chi_k)} e^{2k\pi i t}.
\]
\end{thm}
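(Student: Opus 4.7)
The plan is to adapt the Laplace–Mellin–de-Poissonization scheme from Section~2 to the bucket setting, with the first-order operator $1 + d/dz$ replaced by its $b$-th order analogue $\sum_{0\le j\le b}\binom{b}{j}(d/dz)^{j}$. Translating the recurrence (\ref{bdst-Xn-rr}) to Poisson generating functions yields $\sum_{0\le j\le b}\binom{b}{j}\tilde{f}_1^{(j)}(z) = 2\tilde{f}_1(z/2) + z$ for the mean and an analogous equation for $\tilde{f}_2$ with the same quadratic toll $\tilde{h}_2(z) = 2\tilde{f}_1(z/2)^2 + 4z\tilde{f}_1(z/2) + 2z\tilde{f}_1'(z/2) + z + z^2$ as in the DST case. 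Applying the $b$-th order operator to $\tilde{V}(z) := \tilde{f}_2(z) - \tilde{f}_1(z)^2 - z\tilde{f}_1'(z)^2$ and substituting, the telescoping identities $2\tilde{f}_1(z/2) + z = \sum_j\binom{b}{j}\tilde{f}_1^{(j)}$ and $\tilde{f}_1'(z/2) + 1 = \sum_j\binom{b}{j}\tilde{f}_1^{(j+1)}$ regroup the quadratic contributions into perfect squares, producing $\sum_{0\le j\le b}\binom{b}{j}\tilde{V}^{(j)}(z) = 2\tilde{V}(z/2) + \tilde{g}(z)$ with $\tilde{g}$ as in (\ref{gz-var}); the reduction to the form (\ref{form-g2}) then follows from Leibniz's rule combined with Pascal-type cancellations that annihilate all terms in which either derivative index is $\le 1$.

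Next, I extend the transfer principle (Proposition~\ref{prop-dst-tr}) to the $b$-th order equation by $b$-fold iterated integration in place of one; the majorization bootstrap still yields finite polynomial-in-$|z|$ bounds in both sectors. Combined with Lemma~\ref{lm-closure} this places $\tilde{f}_1, \tilde{f}_2\in\JS$, and because (\ref{form-g2}) expresses $\tilde{g}$ as a polynomial in the $\tilde{f}_1^{(i)}$ with $i\ge 2$, the same closure arguments also give $\tilde{V}\in\JS$. Proposition~\ref{prop-PC-asymp} then reduces the problem to $\tilde{V}(n) + O(1)$. Since $\mathbb{E}(X_n) = \mathbb{E}(X_n^2) = 0$ for $n\le b$, one has $\tilde{V}(z) = O(z^{b+1})$ near $0$, and all boundary terms vanish when taking the Laplace transform of the equation for $\tilde{V}$, producing $(s+1)^b\mathscr{L}[\tilde{V};s] = 4\mathscr{L}[\tilde{V};2s] + \tilde{g}^\star(s)$ with $\tilde{g}^\star := \mathscr{L}[\tilde{g}]$. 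The key algebraic identity $(s+1)^bQ(-s)^b = Q(-2s)^b$ turns the normalization $\bar{\mathscr{L}}[\tilde{V};s] := \mathscr{L}[\tilde{V};s]/Q(-s)^b$ into a clean fixed-point equation, and Mellin transformation gives $\mathscr{M}[\bar{\mathscr{L}};\omega] = G_2(\omega)/(1-2^{2-\omega})$ for $\Re(\omega)>2$.

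To invert, I shift the contour past the simple poles $\omega = 2+\chi_k$ of $(1-2^{2-\omega})^{-1}$, collect residues, and use Proposition~\ref{prop-inv-laplace} to obtain $\tilde{V}(z) = z(C_h + \varpi_h(\log_2 z)) + O(|z|^{\ve-1})$; combining with the de-Poissonization step yields (\ref{var-bdst-TKPL}). The main obstacle is verifying that $G_2(\omega)$ is analytic on $\Re(\omega)>0$ and that $G_2(c+it) = O(e^{-(\pi-\ve)|t|})$ along vertical lines, both needed to justify the contour shift. This rests on precise bounds for $\tilde{g}^\star$: since $\tilde{g}$ involves only derivatives $\tilde{f}_1^{(i)}$ with $i\ge 2$, which by (\ref{tf1z-asymp}) satisfy $\tilde{f}_1^{(i)}(z) = O(|z|^{1-i}\log|z|)$ in the sector $|\arg(z)|\le\pi/2-\ve$, one obtains $\tilde{g}(z) = O(|z|^{-1}(\log|z|)^2)$ at infinity and $\tilde{g}(z) = O(|z|)$ near zero, whence $\tilde{g}^\star(s) = O(|\log s|^3)$ near the origin and $O(|s|^{-2})$ at infinity — exactly the growth that places $G_2$ in the required half-plane and allows Flajolet's standard Mellin decay estimates (\cite{flajolet95a}) to apply. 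This clean decay is the combinatorial signature of the correction $z\tilde{f}_1'(z)^2$ in the definition of $\tilde{V}$: it has been chosen precisely to cancel the otherwise dominant $n^2(\log n)^2$-contribution of $\tilde{f}_2 - \tilde{f}_1^2$, leaving only a linear-order residue.
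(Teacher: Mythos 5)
Your argument follows essentially the same route as the paper: Poissonize (\ref{bdst-Xn-rr}), extend the transfer principle to the $b$-th order operator, verify $\tilde{V}(z) := \tilde{f}_2(z) - \tilde{f}_1(z)^2 - z\tilde{f}_1'(z)^2$ satisfies a DST-type equation with toll $\tilde{g}$ from (\ref{gz-var}) via the two ``telescoping'' identities and perfect-square regrouping, and then run the Laplace normalization by $Q(-s)^b$ (using $(s+1)^bQ(-s)^b=Q(-2s)^b$), Mellin analysis, and inversion — this is exactly the paper's procedure. The only small inaccuracies are overestimated logarithmic factors: from (\ref{asym-f1-bucket}) one actually has $\tilde{f}_1^{(i)}(z)=O(|z|^{1-i})$ for $i\ge 2$ (no $\log$ factor), hence $\tilde{g}(z)=O(|z|^{-1})$ and $\tilde{g}^\star(s)=O(|\log s|)$ near the origin; your weaker bounds still suffice for the contour shifts, so the proof goes through.
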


By straightforward truncations, expansions and approximations, we
obtain the following numerical values for $b=1,\ldots, 5$.
\begin{center}
\begin{tabular}{|c|c|c|c|c|c|}\hline
$b$ & 1 & 2 & 3 & 4 & 5 \\ \hline
$C_h$ & $0.26600$ & $0.13260$ & $0.09004$ & $0.06958$ & $0.05781$
\\ \hline
\end{tabular}
\end{center}
More powerful means are needed to be developed if more degree of
precision is required.

\paragraph{Generating functions.} From (\ref{bdst-Xn-rr}), it follows
that the moment generating function $M_n(y):=\mathbb{E}(e^{X_n y})$
can be recursively computed by the relation
\[
    M_{n+b}(y)
    = \frac{e^{ny}}{2^n}\sum_{0\le j\le n}\binom{n}{j}
    M_j(y)M_{n-j}(y)\qquad(n\ge0),
\]
with $M_n(y)=1$ for $0\le n<b$. The bivariate exponential generating
function $F(z,y)$ then satisfies the equation
\[
    \frac{\partial^b}{\partial z^b}\,F(z,y)
    = F\left(\frac{e^y z}2,y\right)^2,
\]
with $F^{(j)}(0,y)=1$ for $0\le j<b$, and we have the nonlinear
equation for the Poisson generating function $\tilde{F}(z,y) :=
e^{-z} F(z,y)$
\begin{align} \label{PGF-bdst-TKPL}
    \sum_{0\le j\le b}\binom{b}{j}\tilde{F}^{(j)}(z,y)
    = e^{(e^y-1)z}\tilde{F}\left(\frac{e^yz}{2},y\right)^2,
\end{align}
with $\tilde{F}(0,y)=1$.

From this form, the asymptotic analysis of the mean value and that
of the variance proceed along exactly the same line we developed in
the previous section. Thus we briefly sketch the principal steps of
the analysis, leaving the details to the interested reader.

\paragraph{The expected value of $X_n$.} From (\ref{PGF-bdst-TKPL}),
we derive the following differential-functional equation for the
Poisson generating function of the mean
\[
    \sum_{0\le j\le b}\binom{b}{j}\tilde{f}_1^{(j)}(z)
    =2\tilde{f}_1(z/2)+z,
\]
with the initial conditions $\tilde{f}_1^{(j)}(0)=0$ for $0\le j<
b$.

Before applying the Laplace-Mellin approach, we need first a
transfer-type result similar to Proposition~\ref{prop-dst-tr}.
\begin{prop} \label{prop-bdst-tr}
Let $\tilde{f}(z)$ and $\tilde{g}(z)$ be entire functions satisfying
\begin{align}\label{dfe-b-DST}
    \sum_{0\le j\le b}\binom{b}{j}\tilde{f}^{(j)}(z)
    = 2\tilde{f}(z/2)+\tilde{g}(z),
\end{align}
with $f(0)=0$. Then
\[
    \tilde{g}\in\JS \quad
    \Longleftrightarrow\quad\tilde{f}\in\JS.
\]
\end{prop}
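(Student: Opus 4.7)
The plan is to mimic the proof of Proposition~\ref{prop-dst-tr} with adjustments for the higher-order operator $(D+1)^b$ in place of $(D+1)$, proving the two implications separately.

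The direction $\tilde{f}\in\JS\Rightarrow\tilde{g}\in\JS$ will follow cleanly from the closure properties in Lemma~\ref{lm-closure}: part (vi) gives $\tilde{f}^{(j)}\in\JS$ for $0\le j\le b$, part (ii) gives $\tilde{f}(\cdot/2)\in\JS$, and parts (iii)--(iv) combine these via (\ref{dfe-b-DST}) to deliver $\tilde{g}\in\JS$.

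For the reverse direction, I would first reduce to the case $\tilde{f}^{(j)}(0)=0$ for $0\le j<b$. The homogeneous equation $(D+1)^b\tilde{h}=0$ has kernel spanned by $\{z^je^{-z}\}_{0\le j<b}$, each of which is manifestly in $\JS$ (polynomial growth on the sector $|\arg z|\le\ve$, and the companion $e^z\tilde{h}(z)$ being polynomial). By subtracting an appropriate combination, I may assume all lower derivatives of $f=e^z\tilde{f}$ vanish at $0$; this forcing term adds a $\JS$-function to $\tilde{g}$ and so does not affect the hypothesis. Writing the equation in the form $f^{(b)}(z)=2e^{z/2}f(z/2)+g(z)$ and integrating $b$ times yields the integral representation
\[
    \tilde{f}(z)=\frac{z^b}{(b-1)!}\int_0^1(1-s)^{b-1}e^{-(1-s)z}\bigl(2\tilde{f}(sz/2)+\tilde{g}(sz)\bigr)\dd s,
\]
which reduces to the formula used in Proposition~\ref{prop-dst-tr} when $b=1$.

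The core of the argument is then a two-region majorization. For condition \textbf{(O)}, working on $\mathcal{C}_{r,\ve}$ and setting $B(r):=\max_{z\in\mathcal{C}_{r,\ve}}|f(z)|$, the analogous integration of $f^{(b)}(z)=2e^{z/2}f(z/2)+g(z)$ against the kernel $(z-t)^{b-1}/(b-1)!$ will give
\[
    B(r)\le C_b r^{b-1}e^{r\cos(\ve)/2}B(r/2)+O\bigl(e^{(1-\ve)r}\bigr),
\]
and the substitution $K(r):=e^{-r\cos\ve}B(r)$ (using $\cos\ve-1+\ve>0$) leaves a recurrence $K(r)\le C'_b r^{b-1}K(r/2)+O(1)$. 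Iterating $\lceil\log_2 r\rceil$ times produces a bound of the form $K(r)=O(r^{\gamma})$ for some $\gamma=\gamma(b,\ve)$, which gives \textbf{(O)} (possibly after shrinking $\ve$). A parallel treatment on the sector $\mathcal{S}_{r,\ve}$, exploiting the $e^{-(1-s)z}$ decay along with the polynomial bound on $\tilde{g}$, delivers \textbf{(I)}. The main obstacle will be the accounting for the new $r^{b-1}$ factor appearing at every level of the iteration: its compounded contribution must be shown to be absorbable into the crude polynomial bounds of $\JS$. Since only a finite (and rather generous) polynomial growth is needed for the application to Proposition~\ref{prop-PC-mean-var} and the subsequent Laplace--Mellin analysis, the exact exponent is immaterial, and the bookkeeping should go through as in the $b=1$ case.
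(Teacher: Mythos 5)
Your overall strategy matches the paper's: rewrite the equation as $f^{(b)}(z)=2e^{z/2}f(z/2)+g(z)$, integrate $b$ times to obtain the kernel representation
\[
    f(z) = \frac{1}{(b-1)!}\int_0^z (z-t)^{b-1}\bigl(2e^{t/2}f(t/2)+g(t)\bigr)\dd t,
\]
and run the two-region majorization of Proposition~\ref{prop-dst-tr}. The reduction via the kernel of $(D+1)^b$ to force $f^{(j)}(0)=0$ for $j<b$ is a clean way to make the integral representation legitimate, and the forward direction via Lemma~\ref{lm-closure} is correct.

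However, there is a genuine gap in your treatment of the kernel's effect on the recurrence. You claim the recurrence becomes $B(r)\le C_b r^{b-1}e^{r\cos(\ve)/2}B(r/2)+O(e^{(1-\ve)r})$ and then assert that the compounded $r^{b-1}$ factors are "absorbable" into polynomial $\JS$ bounds. They are not: iterating $K(r)\le C r^{b-1}K(r/2)+O(1)$ over $m\approx\log_2 r$ levels produces a factor $\prod_{k<m}(r/2^k)^{b-1}=r^{m(b-1)}2^{-(b-1)m(m-1)/2}\asymp r^{(b-1)(\log_2 r)/2}$, which is superpolynomial and destroys condition \textbf{(O)}. The fix is not to bound the kernel crudely by $(r-t)^{b-1}\le r^{b-1}$ but to exploit the decay of $u^{b-1}e^{-u\cos(\ve)/2}$ in the substitution $u=r-t$: one then has
\[
    \frac1{(b-1)!}\int_0^r (r-t)^{b-1}e^{t\cos(\ve)/2}\dd t
    \;\le\; \frac{e^{r\cos(\ve)/2}}{(b-1)!}\int_0^\infty u^{b-1}e^{-u\cos(\ve)/2}\dd u
    \;=\; \left(\frac{2}{\cos\ve}\right)^{b}e^{r\cos(\ve)/2},
\]
so the recurrence is in fact $B(r)\le C_b\, e^{r\cos(\ve)/2}B(r/2)+O(e^{(1-\ve)r})$ with a constant $C_b=C_b(\ve)$ independent of $r$, exactly as in the case $b=1$ with a different constant. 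The same point applies to \textbf{(I)}, where the kernel $(1-s)^{b-1}e^{-(1-s)r\cos\ve}$ combined with the prefactor $z^b$ again contributes an $r$-independent constant rather than a polynomial in $r$. With these sharper kernel estimates your argument is essentially identical to the paper's.
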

\begin{proof} (Sketch)
The same proof as that for Proposition~\ref{prop-dst-tr} applies
\emph{mutatis mutandis} to (\ref{dfe-b-DST}). The only difference is
that we now have
\[
    f^{(b)}(z)
    = 2e^{z/2}f(z/2)+g(z),
\]
where $f(z) := e^z \tilde{f}(z)$ and $g(z) := e^z \tilde{g}(z)$, so
that (\ref{dst-ir-f}) has the extended representation
\begin{align*}
    f(z)
    &= \frac1{(b-1)!}\int_0^z (z-t)^{b-1}
    \left(2e^{t/2}f(t/2)+g(t)\right) \dd t\\
    &= \frac{z^b}{(b-1)!}\int_0^1 (1-t)^{b-1}
    \left(2e^{tz/2}f(tz/2)+g(tz)\right)\dd t,
\end{align*}
and
\[
    \tilde{f}(z)
    = \frac{z^b}{(b-1)!}\int_0^1 (1-t)^b e^{-(1-t)z}
    \left(2\tilde{f}(tz/2)+\tilde{g}(z)\right) \dd t.
\]
All required estimates can be derived by the same arguments used
there.
\end{proof}

The Laplace transform of $\tilde{f}_1$ now satisfies the functional
equation
\[
    (s+1)^b\mathscr{L}[\tilde{f}_1;s]
    = 4\mathscr{L}[\tilde{f}_1;2s]+s^{-2},
\]
for $\Re(s)>0$. From this equation, we obtain
\[
   \mathscr{L}[\tilde{f}_1;s]
    = \frac1{s^2}\sum_{j\ge0}\frac1{(s+1)^b\cdots
    (1+2^js)^b},
\]
which extends (\ref{Lf1s}). From this series and partial fraction
expansions, we can derive a close-form expression for
$\tilde{f}_1(z)$, which becomes messy especially for large $b$.
Define as before $\bar{\mathscr{L}}[\tilde{f}_1;s]
:=\mathscr{L}[\tilde{f}_1;s]/Q(-s)^b$. Then we obtain
\[
    \bar{\mathscr{L}}[\tilde{f}_1;s]
    =4\bar{\mathscr{L}}[\tilde{f}_1;2s]+\frac1{Q(-2s)^bs^2}.
\]
This relation is almost the same as (\ref{L-bar}). Thus the same
Mellin analysis given there carries over and we deduce that
\begin{equation*}
\begin{split}
    \mathscr{L}[\tilde{f}_1;s]
    &= \frac{1}{s^2}\log_2\frac{1}{s} + \frac{1}{s^2}
    \left(\frac12+\frac{c_4}{\log2}+
    \frac{1}{\log2}\sum_{k\in\mathbb{Z}\setminus\{0\}}
    G_1(2+\chi_k) s^{-\chi_k}\right) \\
    &\qquad +\frac{b}s\log\frac1s+O(|s|^{-1}),
\end{split}
\end{equation*}
uniformly for $|s|\to 0$ and $|\arg(s)|\le\pi-\ve$, where
\begin{align*}
    G_1(\omega)
    := \int_{0}^{\infty}\frac{s^{\omega-3}}{Q(-2s)^b}\dd s,
\end{align*}
and
\begin{align*}
    c_4
    &:= \lim_{\omega\to2}\left(G_1(\omega)
    -\frac{1}{\omega-2}\right) \\
    &= \int_0^1\frac1s\left(\frac1{Q(-2s)^b}-1\right) \dd s
    + \int_1^\infty \frac1{sQ(-2s)^b} \dd s.
\end{align*}
Consequently, by the Laplace inversion,
\begin{equation}\label{asym-f1-bucket}
    \tilde{f}_1(z)
    =(z+b)\log_2 z+z\left(\frac{1}{2}+
    \frac{G_1(2)+\gamma-1}{\log 2}+\frac{1}{\log 2}
    \sum_{k\in\mathbb{Z}\setminus\{0\}}\frac{G_1(2+\chi_k)}
    {\Gamma(2+\chi_k)} z^{\chi_k}\right)+ O(1),
\end{equation}
uniformly for $|z|\to\infty$ and $|\arg(z)|\le\pi/2-\ve$. From this
and Propositions~\ref{prop-bdst-tr} and \ref{prop-PC-asymp}, we
obtain
\[
    \mathbb{E}(X_n)
    = \sum_{0\le j<2k}\frac{\tilde{f}^{(j)}(n)}{j!}\,\tau_j(n)
    +O\left(n^{-1+k}\right),
\]
for any $k=1,2,\dots$. Finally,
\[
    \mathbb{E}(X_n)
    =(n+b)\log_2 n+n\left(\frac{1}{2}+\frac{G_1(2)+\gamma-1}
    {\log 2}+\frac{1}{\log 2}\sum_{k\in\mathbb{Z}\setminus\{0\}}
    \frac{G_1(2+\chi_k)}{\Gamma(2+\chi_k)}n^{\chi_k}\right)+O(1).
\]

\paragraph{Variance of $X_n$.} The analysis here is again similar to
that for the mean. Let $\tilde{f}_2(z)$ denote the Poisson
generating function of the second moment $\mathbb{E}(X_n^2)$. Then,
by (\ref{PGF-bdst-TKPL}),
\[
    \sum_{0\le j\le b}\binom{b}{j}\tilde{f}_2^{(j)}(z)
    = 2\tilde{f}_2(z/2)+2\tilde{f}_1(z/2)^2+4z\tilde{f}_1(z/2)
    +2z\tilde{f}'_1(z/2)+z+z^2,
\]
with the first $b$ Taylor coefficients zero. Define again
\[
    \tilde{V}(z)
    =\tilde{f}_2(z)-\tilde{f}_1(z)^2-z\tilde{f}'_1(z)^2.
\]
Then $\tilde{V}(z)$ satisfies
\[
    \sum_{0\le j\le b}\binom{b}{j}\tilde{V}^{(j)}(z)
    =2\tilde{V}(z/2)+\tilde{g}(z),
\]
where $\tilde{g}(z)$ is given in (\ref{gz-var}).

By the representations (\ref{form-g2}) and (\ref{asym-f1-bucket}),
we have
\begin{align*}
    \tilde{g}(z)
    =\begin{cases}
        O(|z|),&\text{as}\ |z|\to 0;\\
        O(|z|^{-1}),&\text{as}\ |z|\to\infty,
    \end{cases}
\end{align*}
uniformly in the sector $|\arg(z)|\le\pi/2-\ve$. This is similar to
the corresponding estimate (\ref{zf1d22}) in the analysis of the
variance in the previous section. The same procedure there applies
and we deduce (\ref{var-bdst-TKPL}).

\subsection{Node-wise path-length (NPL)}\label{tnpl}

We consider in this section the total node-wise path-length (NPL).
Under the same independent Bernoulli model, we still use $X_n$ to
denote the NPL in a random $b$-DST of $n$ binary strings with node
capacity $b\ge2$. Also let $N_n$ stand for the total number of nodes
(space requirement) in random $b$-DST of $n$ strings. Despite its
being one of the most natural shape measures for $b$-DSTs, the
consideration of $X_n$ here seems to be new. For $N_n$, it is known
that the distribution is asymptotically normal with the mean and the
variance both asymptotically $n$ times a different smooth periodic
function; see \cite{hubalek02a}. In contrast to
(\ref{var-bdst-TKPL}) for the variance of KPL, what is unexpected
and surprising here is that the variance of $X_n$ is of order
$n(\log n)^2$.

\begin{thm} Assume $b\ge2$. The mean of $N_n$ and that of $X_n$
satisfy the following asymptotic relations.
\begin{equation}\label{ENnXn-bdst}
\left\{\begin{split}
    \mathbb{E}(N_n)
    &= nP_{1,0}(\log_2n)+O(1),\\
    \mathbb{E}(X_n)
    &= n(\log_2n)P_{1,0}(\log_2n)+nP_{0,1}^{[2]}(\log_2n)
    +(\log_2n)P_{0,1}^{[3]}(\log_2n)+O(1);
\end{split}\right.
\end{equation}
and the variances of $N_n$ and $X_n$ satisfy
\begin{equation}\label{VNnXn-bdst}
\left\{\begin{split}
    \mathbb{V}(N_n)
    &=nP_{2,0}(\log_2 n)+O(1),\\
    \textnormal{Cov}(N_n,X_n)
    &=n(\log_2n)
    P_{2,0}(\log_2n)+nP^{[2]}_{1,1}(\log_2 n)+(\log n)
    P^{[3]}_{1,1}(\log_2n)+O(1),\\
    \mathbb{V}(X_n)
    &=n(\log_2 n)^2P_{2,0}(\log_2n)+n(\log_2n)
    P^{[2]}_{0,2}(\log_2 n)+n P^{[3]}_{0,2}(\log_2 n)\\
    &\qquad +(\log n)^2 P^{[4]}_{0,2}(\log_2n)
    +(\log_2n)P^{[5]}_{0,2}(\log_2n)+O(1),
\end{split}\right.
\end{equation}
where the $P_{\cdot,\cdot}$'s are all computable, smooth, $1$-periodic
functions.
\end{thm}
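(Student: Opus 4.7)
The plan is to adapt the bivariate version of the Laplace-Mellin-de-Poissonization machinery developed in the previous sections to the joint distribution of $(N_n, X_n)$. First I would derive the joint recurrences: decomposing at the root (which holds the first $b$ keys) gives
\[
    N_{n+b} \stackrel{d}{=} N_{B_n} + N_{n-B_n}^* + 1,
    \qquad
    X_{n+b} \stackrel{d}{=} X_{B_n} + X_{n-B_n}^* + N_{B_n} + N_{n-B_n}^*,
\]
with appropriate initial data, since every node except the root contributes its +1 depth to each node below it. Translating this into a bivariate moment generating function and Poissonizing, the Poisson generating functions $\tilde{n}_1, \tilde{x}_1$ of the means, $\tilde{n}_2, \tilde{x}_2$ of the second moments, and $\tilde{c}$ of $\mathbb{E}(N_n X_n)$ satisfy differential-functional equations of the type $\sum_{j=0}^{b}\binom{b}{j}\tilde{f}^{(j)}(z) = 2\tilde{f}(z/2)+\tilde{g}(z)$, with inhomogeneous terms built from lower-order functions. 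In particular $\sum_{j}\binom{b}{j}\tilde{n}_1^{(j)}(z) = 2\tilde{n}_1(z/2)+1$ and $\sum_{j}\binom{b}{j}\tilde{x}_1^{(j)}(z) = 2\tilde{x}_1(z/2) + 2\tilde{n}_1(z/2)$.

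For the means, I would first apply the Laplace-Mellin-inversion scheme used for $\tilde{f}_1$ in Section~\ref{mean-dst} to obtain $\tilde{n}_1(z) = zP_{1,0}(\log_2 z) + O(1)$. Substituting this into the inhomogeneity of the equation for $\tilde{x}_1$ produces, after Laplace transform and division by $Q(-s)^b$, a Mellin transform whose pole at $\omega = 2$ is now \emph{double} (because $2\tilde{n}_1(z/2)$ already contributes the factor that previously produced the single pole). The residue calculation then yields the $n(\log n) P_{1,0}$ leading term of $\mathbb{E}(X_n)$, together with the $n P^{[2]}_{0,1}$ and $(\log n)P^{[3]}_{0,1}$ corrections coming from the simple pole at $\omega = 2$ and the shifted pole at $\omega = 1$ respectively. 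JS-admissibility and the Poisson-Charlier asymptotic expansion (via Proposition~\ref{prop-bdst-tr} and Proposition~\ref{prop-PC-asymp}) justify the passage from $\tilde{n}_1(n), \tilde{x}_1(n)$ to $\mathbb{E}(N_n), \mathbb{E}(X_n)$, giving \eqref{ENnXn-bdst}.

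For the variances and covariance, I would introduce the corrected Poissonized functionals
\[
    \tilde{V}_N := \tilde{n}_2-\tilde{n}_1^2-z(\tilde{n}_1')^2,
    \quad
    \tilde{V}_{NX} := \tilde{c}-\tilde{n}_1\tilde{x}_1-z\tilde{n}_1'\tilde{x}_1',
    \quad
    \tilde{V}_X := \tilde{x}_2-\tilde{x}_1^2-z(\tilde{x}_1')^2,
\]
and derive their differential-functional equations by a bivariate analog of Lemma~\ref{lmm-Vz-dfe}. Each equation has the same left-hand operator $\sum_{j}\binom{b}{j}(\cdot)^{(j)} - 2(\cdot)(z/2)$ and an inhomogeneous term that is a quadratic form in derivatives of $\tilde{n}_1$ and $\tilde{x}_1$. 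Since $\tilde{x}_1' \sim (\log z)\tilde{n}_1' \cdot P_{1,0}/\ldots$ (more precisely, the leading singular behavior of $\tilde{x}_1''$ is $(\log z)$ times that of $\tilde{n}_1''$), the right-hand side of the $\tilde{V}_X$ equation scales like $(\log z)^2$ times that of $\tilde{V}_N$, while $\tilde{V}_{NX}$ scales like $(\log z)$ times it. The same Laplace-Mellin inversion as in Section~2.4 then produces exactly the $n P_{2,0}$ factor common to all three formulas in \eqref{VNnXn-bdst}, with the remaining $P^{[k]}_{\cdot,\cdot}$ terms arising from lower-order poles of the various Mellin transforms.

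The main obstacle is bookkeeping: the inhomogeneous term for $\tilde{V}_X$ is a sum of products $\tilde{n}_1^{(i)}\tilde{x}_1^{(j)}$ and $\tilde{x}_1^{(i)}\tilde{x}_1^{(j)}$ whose Laplace transforms, after normalization by $Q(-2s)^b$, have Mellin transforms with poles of orders up to three at $\omega = 2$ (and additional poles at $\omega = 1$). One must verify that the coefficients of the highest-order poles collapse to yield precisely $P_{2,0}$ (rather than some unrelated periodic function), which is what encodes the heuristic $X_n \approx (\log_2 n) N_n$ and is the source of the surprising $n(\log n)^2$ order. Verifying condition \textbf{(O)} of JS-admissibility for the quadratic inhomogeneities also requires the closure lemma (Lemma~\ref{lm-closure}), and the error bounds need to be controlled throughout because the leading behavior of $\tilde{V}_X$ is \emph{not} $\tilde{V}_X(n)$ alone: the Poisson-Charlier correction $-\frac{n}{2}\tilde{V}_X''(n) - \frac{n^2}{2}\tilde{x}_1''(n)^2$ contributes a non-negligible term of order $n(\log n)^2$ that must be combined with $\tilde{V}_X(n)$ to produce the final answer.
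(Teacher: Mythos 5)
Your proposal is structurally the same as the paper's proof: joint recurrence (\ref{bdst-NnXn}), bivariate Poisson generating function (\ref{equ-tildePzuv}), the corrected Poissonized functionals $\tilde{V}_N,\tilde{V}_{NX},\tilde{V}_X$ (which the paper calls $\tilde{V},\tilde{U},\tilde{W}$), JS-admissibility transfer via Proposition~\ref{prop-bdst-tr}, and the Laplace--Mellin inversion in which the Mellin transforms of the normalized Laplace transforms have simple, double and triple poles at $\omega=2+\chi_k$ all governed by the same $G_{2,0}$, producing the common periodic factor $P_{2,0}$. Your observation that the coefficient-collapse encoding $X_n\approx(\log_2 n)N_n$ is the crux of the calculation is exactly right, and your guess that the mean of $X_n$ comes from a double pole is also correct.

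There is, however, a concrete error in the final paragraph. You claim that the Poisson--Charlier corrections $-\tfrac{n}{2}\tilde{V}_X''(n)-\tfrac{n^2}{2}\tilde{x}_1''(n)^2$ contribute at the leading order $n(\log n)^2$ and ``must be combined with $\tilde{V}_X(n)$ to produce the final answer.'' They are much smaller. Since $\tilde{x}_1(z)\asymp z\log z$ (times a bounded periodic factor) one has $\tilde{x}_1''(z)=O\!\left((\log z)/z\right)$, hence $n^2\tilde{x}_1''(n)^2=O((\log n)^2)$; similarly $\tilde{V}_X(z)\asymp z(\log z)^2$ gives $\tilde{V}_X''(z)=O((\log z)^2/z)$, hence $n\tilde{V}_X''(n)=O((\log n)^2)$. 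So the de-Poissonization corrections are $O((\log n)^2)$, smaller than the leading term $n(\log n)^2 P_{2,0}(\log_2 n)$ by a factor $n$; they only feed into the $(\log n)^2 P^{[4]}_{0,2}$ and $(\log n)P^{[5]}_{0,2}$ terms of the expansion. The whole point of working with the corrected $\tilde{V}_X$ rather than $\tilde{f}_2-\tilde{f}_1^2$ is precisely that the huge $O(n^2(\log n)^2)$ terms cancel and the de-Poissonization of $\tilde{V}_X$ is then unproblematic: $\mathbb{V}(X_n)=\tilde{V}_X(n)+O((\log n)^2)$. If you genuinely believed the correction were $O(n(\log n)^2)$ you would compute a different leading constant than the paper's, so this is worth fixing before you try to carry out the residue computation.
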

Intuitively, that the variance of NPL is larger than that of KPL can
be seen from the definition of NPL, which depends on the random
variable $N_n$ (see (\ref{bdst-NnXn})), while on the other hand, KPL
depends on $n$ only (in addition to on the two subtrees).  The
following figure shows the first few values of the variance of NPL
and that of KPL.
\begin{center}
\begin{tikzpicture}[xscale=0.4,yscale=0.8]
\draw[color=red, line width=1.5pt] plot
coordinates{(3,0.000000)(4,0.000000)(5,0.187500)(6,0.437500)
(7,0.435303)(8,0.475525)(9,0.550773)(10,0.668912)(11,0.821080)
(12,0.985353)(13,1.142456)(14,1.282745)(15,1.405386)(16,1.514793)
(17,1.617311)(18,1.719037)(19,1.824704)(20,1.937302)};
\foreach \x/\xtext in {4,6,...,20}
  \draw[shift={(\x,0)}, line width=.5pt]
  (0,-.1) -- (0,.1) node[below=5pt] {\scriptsize$\xtext$};%
\foreach \y/\ytext in {1,2,...,5}
  \draw[shift={(3,\y)}, line width=.5pt]
  (-.2,0) -- (.2,0) node[left=3pt] {\scriptsize$\ytext$};%
\draw[-latex, line width=.5pt] (2,0) -- (21,0) node[right] {};%
\draw[-latex, line width=.5pt] (3,-0.5) -- (3,5.5) node[above] {};%
\draw (18,3.5) node{NPL};%
\draw (18,2.1) node{KPL};%
\draw[color=blue!100, line width=1.5pt] plot
coordinates{(3,0)(4,0.25)(5,0.1875)(6,1.0625)(7,0.581787109)
(8,0.918972015)(9,1.152950227)(10,1.411997533)(11,1.786379786)
(12,2.20663314)(13,2.59814695)(14,2.940167301)(15,3.249019713)
(16,3.550402171)(17,3.864858061)(18,4.204876631)(19,4.574390419)
(20,4.982595028)};
\end{tikzpicture}
\end{center}
We see that the variance of NPL increases faster than that of KPL.

Note that the periodic functions of the dominant terms are all
equal, implying that the correlation coefficient of $N_n$ and
$X_n$ is asymptotically $1$.

On the other hand, the mean value $c_{1,0}$ of $P_{1,0}(t)$ is given
by
\[
    c_{1,0} = \frac1{\log 2}\int_0^\infty \frac{(s+1)^{b-1}}
    {Q(-2s)^b}\dd s ;
\]
numerical approximations to $c_{1,0}$ for the first few $b$ are
given as follows.
\begin{center}
\begin{tabular}{|c|c|c|c|c|c|c|}\hline
$b$ & 1 & 2 & 3 & 4 & 5 & 6\\ \hline
$c_{1,0}$ & $1$ & $0.57470$ & $0.40698$ & $0.31594$ & $0.25849$ &
$0.21885$ \\ \hline
\end{tabular}
\end{center}

Note that when $b=1$
\[
    c_{1,0}
    = \frac1{\log 2} \int_0^\infty \frac{\dd s}{Q(-2s)}
    = 1,
\]
by (\ref{G1w}), which is consistent with the fact that $N_n\equiv n$
in this case.

When $b=2$, we see that about $42.5\%$ of nodes on average contain
two keys and $14\%$ of nodes a single key. The storage utilization
is thus not very bad.

From (\ref{ENnXn-bdst}) and these numerical values, we see that, in
contrast to the expected KPL, which is asymptotic to $n\log_2n$ for
all $b$, the expected NPL provides a better indication of the
``shape variation" of random $b$-DSTs.

Our analysis is based on the following straightforward
distributional recurrences
\begin{equation}\label{bdst-NnXn}
    \left\{\begin{array}{l}
    N_{n+b}
    \stackrel{d}{=}N_{B_n}+N_{n-B_n}^{*}+1,\\
    X_{n+b}
    \stackrel{d}{=}X_{B_n}+X_{n-B_n}^{*}+N_{B_n}+N_{n-B_n}^{*},
    \end{array}
    \right.\qquad (n\ge 0),
\end{equation}
with the initial conditions $N_0=0,N_1=\cdots=N_{b-1}=1$ and
$X_0=\cdots=X_{b-1}=0$. Here again $B_n\sim\text{Binomial}(n,1/2),
N_n\stackrel{d}{=}N_n^{*}, X_{n}\stackrel{d}{=}X_n^{*}$ and
$X_n,X_n^{*},B_n$ as well as $N_n,N_n^{*},B_n$ are independent.

\paragraph{Generating functions.}
Define $M_{n}(u,v)=\mathbb{E}(e^{N_nu+X_nv})$. Then
(\ref{bdst-NnXn}) translates into the recurrence
\[
    M_{n+b}(u,v)
    =e^{u}2^{-n}\sum_{j=0}^{n}\binom{n}{j}
    M_j(u+v,v)M_{n-j}(u+v,v),\qquad (n\ge 0),
\]
with $M_0(u,v)=1,M_1(u,v)=\cdots=M_{b-1}(u,v)=e^u$. Next, let
\[
    F(z,u,v)
    :=\sum_{n\ge 0}\frac{M_n(u,v)}{n!}z^n.
\]
Then the recurrence relation gives
\[
    \frac{\partial^b}{\partial z^b}F(z,u,v)
    =e^uF\left(\frac z2,u+v,v\right)^2,
\]
and the Poisson generating function $\tilde{F}(z,u,v)
:=e^{-z}F(z,u,v)$ satisfies
\begin{equation}\label{equ-tildePzuv}
    \sum_{0\le j\le b}\binom{b}{j}
    \frac{\partial^j}{\partial z^j}\tilde{F}(z,u,v)
    = e^{u}\tilde{F}\left(\frac z2,u+v,v\right)^2,
\end{equation}
with the initial conditions $\tilde{F}(z,u,v)= 1+(e^u-1)\sum_{1\le
j<b} (-1)^{j-1}z^j/j!+\cdots$.

For the moments, if we expand $\tilde{F}(z,u,v)$ in terms of $u$ and
$v$,
\[
    \tilde{F}(z,u,v)
    =\sum_{m\ge 0}\frac{1}{m!}\sum_{0\le j\le m}
    \binom{m}{j}\tilde{f}_{j,m-j}(z)u^jv^{m-j},
\]
then $\tilde{f}_{j,m-j}(z)$ is the Poisson generating function of
$\mathbb{E}(N_{n}^{j}X_n^{m-j})$. Thus all moments of $X_n$ and
$N_n$ or their products can be computed by taking suitable
derivatives of (\ref{equ-tildePzuv}) with respect to $u$ and $v$ and
then substituting $u=v=0$.

\paragraph{Expected number of nodes and expected node-wise path
length.} By taking first derivatives of (\ref{equ-tildePzuv}), we
obtain
\begin{align} \label{bdst-tnpl-mean}
    \left\{\begin{array}{l} \displaystyle
    \sum_{0\le j\le b}\binom{b}{j}\tilde{f}_{1,0}^{(j)}(z)
    =2\tilde{f}_{1,0}(z/2)+1,\\  \displaystyle
    \sum_{0\le j\le b}\binom{b}{j}\tilde{f}_{0,1}^{(j)}(z)
    =2\tilde{f}_{0,1}(z/2)+2\tilde{f}_{1,0}(z/2),
    \end{array}\right.
\end{align}
the initial conditions being $\tilde{f}_{1,0}(0)=0$,
$\tilde{f}_{1,0}^{(j)}(0)=(-1)^{j-1}$ for $1\le j<b$ and
$\tilde{f}_{0,1}^{(j)}(0)=0$ for $0\le j< b$.

We can apply the Laplace-Mellin approach as before, starting from
the mean of $N_n$. Note that
\[
    \mathscr{L}[\tilde{f}^{(j)};s]
    = s^j\mathscr{L}[\tilde{f};s] - \sum_{0\le \ell<j} s^\ell
    \tilde{f}^{(j-1-\ell)}(0)\qquad(j=0,1,\dots),
\]
provided that the Laplace transform exists for $\Re(s)>0$. This
gives
\[
    (s+1)^b \mathscr{L}[\tilde{f}_{1,0};s]
    = 4 \mathscr{L}[\tilde{f}_{1,0};2s] +
    \tilde{g}_{1,0}^\star(s),
\]
where
\begin{align*}
    \tilde{g}_{1,0}^\star(s)
    &:= \frac1s +\sum_{0\le \ell \le b-2}s^\ell
    \sum_{\ell\le j\le b-2}
    \binom{b}{j+2}\tilde{f}^{(j+1-\ell)}_{1,0}(0)\\
    &= \frac1s+\sum_{1\le j<b}\binom{b-1}j s^{j-1}\\
    &= s^{-1}(s+1)^{b-1}.
\end{align*}
Unlike all previous cases, iterating this functional equation leads
to a divergent series. Although this problem can be solved by
subtracting a sufficient number of initial terms of
$\tilde{f}_{1,0}(z)$, the approach we use does not rely on this and
avoids completely such a consideration.

Let $\bar{\mathscr{L}}[\tilde{f}_{1,0};s]
 := \mathscr{L}[\tilde{f}_{1,0};s]/Q(-s)^b$.
Then
\[
    \mathscr{M}[\bar{\mathscr{L}}[\tilde{f}_{1,0};s];\omega]
    =\frac{G_{1,0}(\omega)}{1-2^{2-\omega}},\qquad (\Re(\omega)>2),
\]
where
\begin{align*}
    G_{1,0}(\omega)
    := \int_{0}^{\infty}\frac{s^{\omega-2}}{Q(-2s)^b}
    (s+1)^{b-1}\dd s,
\end{align*}
for $\Re(\omega)>1$.

From this, we deduce that
\begin{equation}\label{mean-1}
    \tilde{f}_{1,0}(z)
    = zP_{1,0}(\log_2z)+O(1),
\end{equation}
uniformly for $|z|\to\infty$ and $|\arg(z)|\le\pi/2-\ve$, where
$P_{1,0}(t)$ is a periodic function with the Fourier series
representation
\begin{align*}
    P_{1,0}(t)
    := \frac{1}{\log 2} \sum_{k\in\mathbb{Z}}
    \frac{G_{1,0}(2+\chi_k)}{\Gamma(2+\chi_k)} e^{2k\pi it},
\end{align*}
the series being absolutely convergent. From this we deduce the
first approximation of (\ref{ENnXn-bdst}).

We now turn to the expected NPL $\mathbb{E}(X_n)$. By
(\ref{bdst-tnpl-mean}), we have
\[
    (s+1)^b\mathscr{L}[\tilde{f}_{0,1};s]
    = 4 \mathscr{L}[\tilde{f}_{0,1};2s] +
    4\mathscr{L}[\tilde{f}_{1,0};2s].
\]
Let $\bar{\mathscr{L}}[\tilde{f}_{0,1};s]:=
 \mathscr{L}[\tilde{f}_{0,1};s]/Q(-s)^b$. Then
\[
    \mathscr{M}[\bar{f}_{0,1};\omega]
    =\frac{2^{2-\omega}G_{1,0}(\omega)}
    {\left(1-2^{2-\omega}\right)^2},\qquad (\Re(\omega)>2).
\]
From this we deduce that
\begin{equation}\label{mean-2}
    \tilde{f}_{0,1}(z)
    = z(\log_2 z)P_{0,1}^{[1]}(\log_2 z)
    +zP_{0,1}^{[2]}(\log_2 z)+(\log_2z)P_{0,1}^{[4]}(\log_2 z)
    +O(1),
\end{equation}
uniformly for $|z|\to\infty$ and $|\arg(z)|\le\pi/2-\ve$, where
$P_{0,1}^{[1]}(t),P_{0,1}^{[2]}(t),P_{0,1}^{[4]}(t)$ are smooth,
1-periodic functions whose Fourier coefficients are given by
\begin{align*}
    P_{0,1}^{[1]}(t)
    &= P_{1,0}(t)
    = \frac1{\log 2} \sum_{k\in\mathbb{Z}}
    \frac{G_{1,0}(2+\chi_k)}{\Gamma(2+\chi_k)}\,e^{2k\pi it},\\
    P_{0,1}^{[2]}(t)
    &= -\frac1{(\log 2)^2} \sum_{k\in\mathbb{Z}}
    \frac{G_{1,0}'(2+\chi_k)\psi(2+\chi_k)
    -G_{1,0}(2+\chi_k)}{\Gamma(2+\chi_k)}\,e^{2k\pi it},\\
    P_{0,1}^{[4]}(t)
    &= \frac b{\log 2} \sum_{k\in\mathbb{Z}}
    \frac{G_{1,0}(2+\chi_k)}{\Gamma(1+\chi_k)}\,e^{2k\pi it}.
\end{align*}
Here $\psi(z)$ denotes the derivative of $\log\Gamma(z)$ and all
series are absolutely convergent. This proves (\ref{ENnXn-bdst}).

\paragraph{Variance.} Taking second derivatives in
(\ref{equ-tildePzuv}) and substituting $u=v=0$ gives
\[
\left\{\begin{split}
    \sum_{0\le j\le b}\binom{b}{j}\tilde{f}^{(j)}_{2,0}(z)
    &=2\tilde{f}_{2,0}(z/2)+2\tilde{f}_{1,0}(z/2)^2
    +4\tilde{f}_{1,0}(z/2)+1,\\
    \sum_{0\le j\le b}\binom{b}{j}\tilde{f}^{(j)}_{1,1}(z)
    &=2\tilde{f}_{1,1}(z/2)+2\tilde{f}_{2,0}(z/2)
    +2(\tilde{f}_{1,0}(z/2)+\tilde{f}_{0,1}(z/2))
    (\tilde{f}_{1,0}(z/2)+1),\\
    \sum_{0\le j\le b}\binom{b}{j}\tilde{f}^{(j)}_{0,2}(z)
    &=2\tilde{f}_{0,2}(z/2)+4\tilde{f}_{1,1}(z/2)
    +2\tilde{f}_{0,2}(z/2)
    +2(\tilde{f}_{1,0}(z/2)+\tilde{f}_{0,1}(z/2))^2,
\end{split}\right.
\]
with the initial conditions $\tilde{f}_{2,0}^{(j)}(0)=(-1)^{j-1}$
for $1\le j< b$ and $\tilde{f}_{2,0}(0) = \tilde{f}_{1,1}^{(j)}(0)
= \tilde{f}_{0,2}^{(j)}(0) = 0$, for $0\le j< b$.

The remaining calculations follow the same pattern of proof we used
above but become much more involved. We begin with
\[
\left\{\begin{split}
    \tilde{V}(z)
    &=\tilde{f}_{2,0}(z)-\tilde{f}_{1,0}(z)^2
    -z\tilde{f}'_{1,0}(z)^2,\\
    \tilde{U}(z)
    &=\tilde{f}_{1,1}(z)-\tilde{f}_{1,0}(z)\tilde{f}_{0,1}(z)
    -z\tilde{f}'_{1,0}(z)\tilde{f}'_{0,1}(z),\\
    \tilde{W}(z)
    &=\tilde{f}_{0,2}(z)-\tilde{f}_{0,1}(z)^2
    -z\tilde{f}'_{0,1}(z)^2.
\end{split}\right.
\]
Then we deduce
\[
\left\{\begin{split}
    \sum_{0\le j\le b}\binom{b}{j}\tilde{V}^{(j)}(z)
    &=2\tilde{V}(z/2)+\tilde{g}_{2,0}(z),\\
    \sum_{0\le j\le b}\binom{b}{j}\tilde{U}^{(j)}(z)
    &=2\tilde{U}(z/2)+\tilde{g}_{1,1}(z),\\
    \sum_{0\le j\le b}\binom{b}{j}\tilde{W}^{(j)}(z)
    &=2\tilde{W}(z/2)+\tilde{g}_{0,2}(z),
\end{split}\right.
\]
where
\[
\left\{
\begin{split}
    \tilde{g}_{2,0}(z)
    &=\left(\sum_{0\le j\le b}\binom{b}{j}
    \tilde{f}^{(j)}_{1,0}(z)\right)^2
    +z\left(\sum_{0\le j\le b}\binom{b}{j}
    \tilde{f}^{(j+1)}_{1,0}(z)\right)^2\\
    &\qquad -\sum_{0\le j\le b}\binom{b}{j}
    \left(\tilde{f}_{1,0}(z)^2
    +z\tilde{f}'_{1,0}(z)^2\right)^{(j)},\\
    \tilde{g}_{1,1}(z)
    &=2\tilde{V}(z/2)+\left(\sum_{0\le j\le b}\binom{b}{j}
    \tilde{f}^{(j)}_{1,0}(z)\right)\left(\sum_{0\le j\le b}
    \binom{b}{j}\tilde{f}^{(j)}_{0,1}(z)\right)\\
    &\qquad +z\left(\sum_{0\le j\le b}\binom{b}{j}
    \tilde{f}^{(j+1)}_{1,0}(z)\right)
    \left(\sum_{0\le j\le b}\binom{b}{j}
    \tilde{f}^{(j+1)}_{0,1}(z)\right) \\
    &\qquad -\sum_{0\le j\le b}\binom{b}{j}
    \left(\tilde{f}_{1,0}(z)\tilde{f}_{0,1}(z)
    +z\tilde{f}'_{1,0}(z)\tilde{f}'_{0,1}(z)\right)^{(j)},\\
    \tilde{g}_{0,2}(z)
    &=4\tilde{U}(z/2)+2\tilde{V}(z/2)
    +\left(\sum_{0\le j\le b}\binom{b}{j}
    \tilde{f}^{(j)}_{0,1}(z)\right)^2\\
    &\qquad +z\left(\sum_{0\le j\le b}\binom{b}{j}
    \tilde{f}^{(j+1)}_{0,1}(z)\right)^2
    -\sum_{0\le j\le b}\binom{b}{j}\left(\tilde{f}_{0,1}(z)^2
    +z\tilde{f}'_{0,1}(z)^2\right)^{(j)}.
\end{split}\right.
\]
The initial conditions are $\tilde{V}(0) =\tilde{U}^{(j)}(0)=
\tilde{W}^{(j)}(0) = 0$ for $0\le j< b$ and
\[
    \tilde{V}^{(j)}(0)
    = (-1)^j\left(1+(j-2)2^{j-1}\right),
    \qquad(1\le j\le b).
\]
From (\ref{mean-1}), (\ref{mean-2}) and Ritt's theorem (see
\cite{olver1974a}), we have
\[
\left\{
\begin{split}
    \tilde{g}_{2,0}(z)
    &=O\left(|z|^{-1}\right),\\
    \tilde{g}_{1,1}(z)-2\tilde{V}(z/2)
    &=O\left(|z|^{-1}\right),\\
    \tilde{g}_{0,2}(z)-4\tilde{U}(z/2)-2\tilde{V}(z/2)
    &=O\left(|z|^{-1}\right),
\end{split}
\right.
\]
uniformly for $|z|\to\infty$ and $|\arg(z)|\le\pi/2-\ve$. Let
$\bar{\mathscr{L}}[\tilde{A};s] := \mathscr{L}
[\tilde{A};s]/Q(-s)^b$, where $\tilde{A}\in\{\tilde{V}, \tilde{U},
\tilde{W}\}$. Then we obtain, for $\Re(\omega)>2$,
\[
\left\{
    \begin{split}
    \mathscr{M}[\bar{\mathscr{L}}[\tilde{V};s];\omega]
    &=\frac{G_{2,0}(\omega)}{1-2^{2-\omega}},\\
    \mathscr{M}[\bar{\mathscr{L}}[\tilde{U};s];\omega]
    &=\frac{2^{2-\omega}G_{2,0}(\omega)}
    {\left(1-2^{2-\omega}\right)^2}+\frac{G_{1,1}(\omega)}
    {1-2^{2-\omega}},\\
    \mathscr{M}[\bar{\mathscr{L}}[\tilde{W};s];\omega]
    &=\frac{2^{5-2\omega}G_{2,0}(\omega)}
    {\left(1-2^{2-\omega}\right)^3}
    +\frac{2^{2-\omega}\left(2G_{1,1}(\omega)
    +G_{2,0}(\omega)\right)}{\left(1-2^{2-\omega}\right)^2}
    +\frac{G_{1,1}(\omega)+G_{0,2}(\omega)}{1-2^{2-\omega}},
\end{split}\right.
\]
where
\[
\left\{
\begin{split}
    G_{2,0}(\omega)
    &:=\int_{0}^{\infty}\frac{s^{\omega-1}}{Q(-2s)^b}
    \left(\mathscr{L}[\tilde{g}_{2,0};s]
    +\frac{(s+1)^{b-1}-(-1)^b\left(2b-3+(b-1)s\right)}
    {(s+2)^2}\right)\dd s,\\
    G_{1,1}(\omega)
    &:=\int_{0}^{\infty}\frac{s^{\omega-1}}{Q(-2s)^b}
    \int_{0}^{\infty}e^{-sz}
    \left(\tilde{g}_{1,1}(z)-2\tilde{V}(z/2)\right)
    \dd z\dd{s},\\
    G_{0,2}(\omega)
    &:=\int_{0}^{\infty}\frac{s^{\omega-1}}{Q(-2s)^b}
    \int_{0}^{\infty}e^{-sz}
    \left(\tilde{g}_{0,2}(z)-2\tilde{V}(z/2)-
    4\tilde{U}(z/2)\right)\dd z\dd{s},
\end{split}\right.
\]
with all functions analytic for $\Re(\omega)>0$. Consequently, we
deduce (\ref{VNnXn-bdst}).

\section{Digital search trees. II. More shape parameters.}
\label{DST-II}

We consider in this section four additional examples on DSTs whose
variances are essentially linear. The same tools we use readily
apply to $b$-DSTs, but we focus on DSTs because the results are
easier to state and the asymptotic behaviors do not differ in
essence with those for the more general $b$-DSTs the corresponding
expressions of which are however much messier.

The first parameter we consider is the so-called $w$-parameter (see
\cite{drmota09a}), which is the sum of the subtree-size of the
parent-node of each leaf (over all leaves)\footnote{The
\emph{leaves} or \emph{leaf-nodes} of a tree are nodes without any
descendants.}. Instead of $w$-parameter, we call it the \emph{total
peripheral path-length} (PPL), since it measures to some extent the
fringe ampleness of the trees. Also this is in consistency with the
two previous notions of path-length we distinguished.

Then we consider the number of leaves, which has previously
been studied in details in
\cite{flajolet86d,hubalek02a,kirschenhofer88d} and which is well
connected to PPL. Our expression for the variance simplifies known
ones.

Yet another notion of path-length we consider here is the so-called
\emph{Colless index} in phylogenetics, which is the sum of the
absolute difference of the two subtree-sizes of each node (over all
nodes). We call this index the \emph{total differential path-length}
(DPL) as it clearly indicates the balance or symmetry of the tree.
Another widely used measure of imbalance in phylogenetics is the
\emph{Sackin index}, which is nothing but the external path-length.

The last example we consider is the \emph{weighted path-length}
(WPL), which often arises in coding, optimization and many related
problems.

The orders of the means and the variances exhibited by all the shape
parameters we study in this paper are listed in Table~\ref{tb-all-pl}.

\subsection{Peripheral path-length (PPL)}

The PPL (or $w$-parameter) was introduced in \cite{drmota09a}, the
motivations arising from the analysis of compression algorithms. We
start from the \emph{fringe-size} of a leaf node $\lambda$, which is
defined to be the size of the subtree rooted at its parent-node; see
Figure~\ref{fg-fpl}. The PPL of a tree is then defined to be the sum
of the fringe-sizes of all leaf-nodes. Let $X_n$ denote the PPL in a
DST built from $n$ random binary strings under our usual independent
Bernoulli model.

\begin{figure}[!h]
\begin{center}
\begin{tikzpicture}[scale=0.4]
\draw[line width=0.8pt] (2,-2) -- (0,-1);%
\draw[line width=0.8pt] (-2,-2) -- (0,-1);%
\node[draw,fill,shape=circle, inner sep=1.5pt]  at (0,-1) { };%
\node[draw,fill,shape=circle, inner sep=1.5pt]  at (-2,-2) { };%
\draw[line width=0.8pt] (2,-2) -- (3.5,-4) -- (.5,-4) -- cycle;%
\draw[line width=0.8pt] (2,-3.5) node[text width=3cm,text centered]
{$T$};
\draw[line width=0.8pt] (0,.5) node[text width=3cm,text centered]
{\Large $\vdots$};
\end{tikzpicture}
\begin{tikzpicture}[scale=0.4]
\draw[line width=0.8pt] (2,-2) -- (0,-1);%
\draw[line width=0.8pt] (-2,-2) -- (0,-1);%
\node[draw,fill,shape=circle, inner sep=1.5pt]  at (0,-1) { };%
\node[draw,fill,shape=circle, inner sep=1.5pt]  at (2,-2) { };%
\draw[line width=0.8pt] (-2,-2) -- (-3.5,-4) --  (-.6,-4)  -- cycle;
\draw[line width=0.8pt] (-2.05,-3.5) node[text width=3cm,text centered]
{$T$};
\draw[line width=0.8pt] (0,.5) node[text width=3cm,text centered]
{\Large $\vdots$};
\end{tikzpicture}
\end{center}
\caption{\emph{The two possible configurations of the fringe of a
leaf: the fringe-size (or $w$-parameter) equals $|T|+2$. Note that
$T$ may be empty.}}
\label{fg-fpl}
\end{figure}
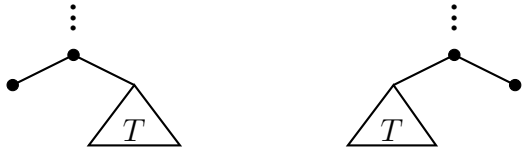

Drmota et al.\ showed in \cite{drmota09a} that
\begin{align}\label{EPPL}
    \mathbb{E}(X_n)
    = n\left(C_w + \varpi_w(\log_2n)\right) + o(n),
\end{align}
where
\begin{align*}
    C_w
    :=\sum_{\ell\ge0}\frac{(\ell+1)(\ell-2)}{Q_\ell 2^\ell}
    \left(\sum_{k\ge1} \frac1{2^{\ell+k}-1} -1 \right)
    +\frac{1}{\log2}\sum_{\ell\ge 0}\frac{2\ell-1}{Q_\ell 2^\ell}.
\end{align*}
Note that by (\ref{inv-Qz}), we have the identities
\begin{align*}
    \sum_{\ell\ge0}\frac{(\ell+1)(\ell-2)}{Q_\ell 2^\ell}
    &= \frac1{Q_\infty}\left(\sum_{j\ge1}\frac1{(2^j-1)^2}
    + \left(\sum_{j\ge1}\frac1{2^j+1}\right)^2-2\right),\\
    \sum_{\ell\ge0}\frac{2\ell-1}{Q_\ell 2^\ell}
    &= \frac1{Q_\infty}\left(\sum_{j\ge1}\frac2{2^j-1}
    -1\right).
\end{align*}
The asymptotic behavior (\ref{EPPL}) is to be compared with the
$n\log n$-order exhibited by most other log-trees such as binary
search trees and recursive trees; see \cite{drmota09a}. It reflects
that most fringes of random DSTs are small in size; see
Figure~\ref{fg-rnd-dst}. Indeed, since the expected number of leaves
is also asymptotic to $n$ times a periodic function, the result
(\ref{EPPL}) implies that the average size of a fringe in random
DSTs is bounded. We show that the standard deviation is also small.

Define
\begin{align}\label{PPL-g2z}
\begin{split}
    \tilde{g}_2(z)
    &:= z\tilde{f}_1''(z)^2 -\frac{z}{16}\,e^{-z}
    \left(z^4+4z^3+16z^2-8z+64\right) \\
    &\qquad -\frac{z}4\,e^{-z/2}\left(
    4(z+4)\tilde{f}_1(z/2)-2(z^2+2z+8)\tilde{f}_1'(z/2)
    -(z+2)(z+8)\right),
\end{split}
\end{align}
where $\tilde{f}_1(z)$ represents as usual the Poisson generating
function of $\mathbb{E}(X_n)$. Let $G_2(\omega)$ denote the Mellin
transform of $\mathscr{L}[\tilde{g}_2;s]/Q(-2s)$.
\begin{thm} The mean and the variance of the total PPL $X_n$ of
random DSTs of $n$ strings satisfy
\begin{align}
    \mathbb{E}(X_n)
    &= n\left(C_w + \varpi_w(\log_2n)\right) + O(1),\nonumber \\
    \mathbb{V}(X_n)
    &= n P_w(\log_2n) + O(1), \label{VPPL}
\end{align}
where $P_w(t)$ is a smooth, $1$-periodic function with the Fourier
series expansion
\[
    P_w(t)
    = \frac1{\log 2}\sum_{k\in\mathbb{Z}} \frac{G_2(2+\chi_k)}
    {\Gamma(2+\chi_k)}\,e^{2k\pi it},
\]
the series being absolutely convergent.
\end{thm}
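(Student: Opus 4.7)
The plan is to apply the Laplace--Mellin--de-Poissonization machinery developed in Section~\ref{dst} to the PPL parameter, following the same template used for internal path-length, but with a more delicate set-up for the toll functions because leaves interact with the root in a non-trivial way.

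First, I would derive the distributional recurrence for $X_n$. Decomposing a random DST at its root, the PPL splits as $X_{n+1} \stackrel{d}{=} X_{B_n} + X_{n-B_n}^* + T_n$, where the toll $T_n$ records the contribution of leaves whose parent is either the root or lies in a small subtree. A careful bookkeeping of the cases $B_n\in\{0,1,2\}$ (and symmetrically for $n-B_n$) is needed, since these are the configurations in which a child of the root is itself a leaf or a parent of a leaf. Once $T_n$ is identified, the Poisson generating functions $\tilde{f}_1,\tilde{f}_2$ of the mean and second moment satisfy DST-type equations
\[
   \tilde{f}_1(z)+\tilde{f}_1'(z)=2\tilde{f}_1(z/2)+\tilde{h}_1(z),\qquad
   \tilde{f}_2(z)+\tilde{f}_2'(z)=2\tilde{f}_2(z/2)+\tilde{h}_2(z),
\]
with explicit tolls $\tilde{h}_1(z),\tilde{h}_2(z)$ that are polynomials in $z$ multiplied by $e^{-z/2}$ and $e^{-z}$ (reflecting the small-subtree corrections). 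Substituting these into Lemma~\ref{lmm-Vz-dfe}, the function $\tilde{V}(z):=\tilde{f}_2(z)-\tilde{f}_1(z)^2-z\tilde{f}_1'(z)^2$ satisfies $\tilde{V}(z)+\tilde{V}'(z)=2\tilde{V}(z/2)+\tilde{g}(z)$, with $\tilde{g}(z)$ matching the expression $\tilde{g}_2(z)$ in~(\ref{PPL-g2z}). Verifying this identity is an algebraic exercise once $\tilde{h}_1,\tilde{h}_2$ are known; I expect to be the main technical obstacle, since identifying the correct toll (and in particular the correct coefficients of the $e^{-z}$ and $e^{-z/2}$ polynomial factors) requires checking several boundary cases for leaves near the root.

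The asymptotic analysis then proceeds mechanically. Using the closure properties of Lemma~\ref{lm-closure} and the transfer principle of Proposition~\ref{prop-dst-tr}, both $\tilde{f}_1$ and $\tilde{V}$ are $\JS$-admissible (the $e^{-z/2}$, $e^{-z}$ components lie in $\JS$ by part~(i) of Lemma~\ref{lm-closure}), so Proposition~\ref{prop-PC-asymp} legitimates de-Poissonization: $\mathbb{E}(X_n)=\tilde{f}_1(n)+O(1)$ and $\mathbb{V}(X_n)=\tilde{V}(n)+O(1)$ up to lower-order corrections of the form $n\tilde{f}_1'(n)^2$ etc., all of which are $O(1)$ here because $\tilde{f}_1'(z)=O(1)$ (the mean is $O(n)$, not $n\log n$). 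The asymptotics of $\tilde{f}_1(n)$ giving~(\ref{EPPL}) is obtained in the standard way: take the Laplace transform, normalize $\bar{\mathscr{L}}[\tilde{f}_1;s]=\mathscr{L}[\tilde{f}_1;s]/Q(-s)$, apply the Mellin transform, and observe that the resulting function has a double pole at $\omega=1$ whose residue produces the linear term $nC_w$ with its periodic fluctuation $\varpi_w$; finally invert by Proposition~\ref{prop-inv-laplace}.

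For the variance, the same pipeline is applied to $\tilde{V}$. Applying the Laplace transform to the equation for $\tilde{V}$ gives $(s+1)\mathscr{L}[\tilde{V};s]=4\mathscr{L}[\tilde{V};2s]+\mathscr{L}[\tilde{g}_2;s]$, and dividing by $Q(-2s)$ yields a normalized equation whose Mellin transform equals $G_2(\omega)/(1-2^{2-\omega})$ for $\Re(\omega)>2$. One must verify that $G_2(\omega)$ is analytic in a strip containing $\Re(\omega)=2$ and decays exponentially along vertical lines; this follows from the $O(|s|^{-1})$ behavior of $\mathscr{L}[\tilde{g}_2;s]$ as $|s|\to\infty$ (which comes from $\tilde{g}_2(z)=O(|z|)$ near $0$) combined with the estimate~(\ref{est-Q}) for $Q(-2s)$ and the analog of Proposition~5 of~\cite{flajolet95a}. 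Inverting the Mellin transform by residues picks up the simple poles of $(1-2^{2-\omega})^{-1}$ at $\omega=2+\chi_k$, giving $\bar{\mathscr{L}}[\tilde{V};s]=s^{-2}P_w(\log_2(1/s))+O(|s|^{-1})$ locally as $s\to0$. Proposition~\ref{prop-inv-laplace} then inverts to $\tilde{V}(z)=zP_w(\log_2 z)+O(1)$ uniformly in $|\arg(z)|\le\pi/2-\ve$, and de-Poissonization via Proposition~\ref{prop-PC-mean-var} (suitably adapted) delivers~(\ref{VPPL}). Absolute convergence of the Fourier series follows from the exponential decay of $\Gamma(2+\chi_k)^{-1}G_2(2+\chi_k)$ along the vertical line, as in the proof of Theorem~\ref{thm-KPS}.
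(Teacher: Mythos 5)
Your overall strategy matches the paper's: derive the distributional recurrence by conditioning at the root (treating the small-subtree cases separately), write down the DST-type differential-functional equations for $\tilde{f}_1$ and $\tilde{f}_2$, pass to $\tilde{V}$ via Lemma~\ref{lmm-Vz-dfe}, establish $\JS$-admissibility, and run the Laplace--Mellin--de-Poissonization pipeline. Two small points: the only genuinely exceptional subtree sizes in the paper's recurrence are $0$ and $1$ (a subtree of size $2$ or more contributes its own PPL with no correction, so the special casing stops at $\{0,1\}$, not $\{0,1,2\}$), and the toll $\tilde{h}_2(z)$ also contains the term $2\tilde{f}_1(z/2)^2$ and products with $\tilde{f}_1(z/2),\,\tilde{f}'_1(z/2)$, not just polynomial-times-exponential pieces; but neither affects the logic, since those terms cancel inside Lemma~\ref{lmm-Vz-dfe}.

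There is, however, a concrete error in your mean analysis. You claim that, after normalizing and taking the Mellin transform of $\bar{\mathscr{L}}[\tilde{f}_1;s]$, the resulting function ``has a double pole at $\omega=1$ whose residue produces the linear term $nC_w$.'' That is wrong on both counts. Here the toll is $\tilde{h}_1(z)=z(2+z/2)e^{-z/2}$, so $\mathscr{L}[\tilde{h}_1;s]=16(s+1)/(2s+1)^3$ remains \emph{bounded} as $s\to0$, and hence
\[
   G_1(\omega)=16\int_0^\infty\frac{s^{\omega-1}}{Q(-s)(2s+1)^3}\,\dd s
\]
is analytic in the whole half-plane $\Re(\omega)>0$ (contrast this with the internal path-length, where $\mathscr{L}[\tilde{h}_1;s]=s^{-2}$ and $G_1$ itself has a pole at $\omega=2$, which is what produces the $n\log n$ term there). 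Consequently the only singularities of $G_1(\omega)/(1-2^{2-\omega})$ are the \emph{simple} poles of $(1-2^{2-\omega})^{-1}$ at $\omega=2+\chi_k$; the residue at $\omega=2$ gives the $s^{-2}$-term, whose Laplace inversion is the linear term $z\,P_w(\log_2 z)$. A double pole at $\omega=1$ would instead produce an $s^{-1}\log(1/s)$-term and hence a $\log z$ contribution, which is inconsistent with $\mathbb{E}(X_n)=\Theta(n)$ --- and also inconsistent with your own (correct) treatment of the variance a few lines later, where you properly attribute the pole at $\omega=2+\chi_k$ to the factor $1-2^{2-\omega}$. Relatedly, the remark that ``corrections of the form $n\tilde{f}_1'(n)^2$ \ldots\ are $O(1)$'' is not right: since $\tilde{f}_1'(n)\to C_w\ne 0$, that quantity is $\Theta(n)$, which is precisely why it must be built into $\tilde{V}$ rather than dismissed as lower order. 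The actual de-Poissonization remainders here are of the form $n\tilde{V}''(n)$ and $n^2\tilde{f}_1''(n)^2$, which are $O(1)$ because $\tilde{f}_1''(z)=O(1/|z|)$.
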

We provide only the major steps of the proof since it follows the
same approach we developed above.

\paragraph{Recurrence and generating functions.}
By definition and by conditioning on the size of one of the subtrees
of the root, we have the following different configurations
\begin{center}
\begin{tikzpicture}[scale=0.3, line width=.5pt]
\draw[line width=0.8pt] (-2,-2) -- (0,-1);%
\draw[fill] (0,-1) circle (5pt);%
\draw[line width=0.8pt] (-2,-2) --
(-3.5,-4) --  (-.5,-4)  -- cycle;%
\draw[line width=0.8pt](-2.05,-3.6) node[text width=3cm, text
centered] {\tiny $n-1$};%
\end{tikzpicture}\hspace*{-1.3cm}
\begin{tikzpicture}[scale=0.3, line width=.5pt]
\draw[line width=0.8pt] (2,-2) -- (0,-1);%
\draw[fill] (0,-1) circle (5pt);%
\draw[line width=0.8pt] (2,-2) --
(3.5,-4) --  (.5,-4)  -- cycle;%
\draw[line width=0.8pt] (2.05,-3.6) node[text width=3cm, text
centered] {\tiny $n-1$};%
\end{tikzpicture}\hspace*{-.3cm}
\begin{tikzpicture}[scale=0.3, line width=.5pt]
\draw[line width=0.8pt] (2,-2) -- (0,-1);%
\draw[line width=0.8pt] (-2,-2) -- (0,-1);%
\draw[fill] (0,-1) circle (5pt);%
\draw[fill] (-2,-2) circle (5pt);%
\draw[line width=0.8pt] (2,-2) -- (3.5,-4) --  (.5,-4)  -- cycle;%
\draw[line width=0.8pt] (1.95,-3.6) node[text width=3cm, text
centered] {\tiny $n-2$};%
\end{tikzpicture}\hspace*{-1.6cm}
\begin{tikzpicture}[scale=0.3, line width=.5pt]
\draw[line width=0.8pt] (-2,-2) -- (0,-1);%
\draw[line width=0.8pt] (2,-2) -- (0,-1);%
\draw[fill] (0,-1) circle (5pt);%
\draw[fill] (2,-2) circle (5pt);%
\draw[line width=0.8pt] (-2,-2) -- (-3.5,-4) --  (-.5,-4)  -- cycle;%
\draw[line width=0.8pt] (-2.05,-3.6) node[text width=3cm, text
centered] {\tiny $n-2$};%
\end{tikzpicture}\hspace*{-.3cm}
\begin{tikzpicture}[scale=0.3, line
width=.5pt]
\draw[line width=0.8pt] (-2,-2) -- (0,-1);%
\draw[line width=0.8pt] (3,-2) -- (0,-1);%
\draw[fill] (0,-1) circle (5pt);%
\draw[line width=0.8pt] (-2,-2) --
(-3.5,-4) --  (-.5,-4)  -- cycle;%
\draw[line width=0.8pt] (3,-2) --
(5.5,-4) --  (0.5,-4)  -- cycle;%
\draw[line width=0.8pt] (-1.95,-3.6)
node[text width=3cm, text centered] {\tiny $k$};%
\draw[line width=0.8pt] (3,-3.6) node[text width=3cm, text
centered] {\tiny $n-1-k$};%
\end{tikzpicture}
\end{center}

\noindent from which we derive the recurrence for the PPL
\[
    X_n
    \stackrel{d}{=}\begin{cases}
        X_{n-1},&\text{with probability}\ 2^{2-n};\\
        n+X_{n-2},&\text{with probability}\ (n-1)2^{2-n};\\
        X_k+X_{n-1-k}^{*},&\text{with probability}\
        2^{1-n}\binom{n-1}{k},\ 2\le k\le n-3,
    \end{cases}
\]
where $X_0=X_1=0, X_2=2$ and $X_3$ has the distribution
\[
    X_3
    =\begin{cases}
        6,&\text{with probability $1/2$};\\
        2,&\text{with probability $1/2$}.
    \end{cases}
\]

From this recurrence, it follows that the bivariate Poisson
generating function
\[
    \tilde{F}(z,y)
    :=e^{-z}\sum_{n\ge 0}\frac{\mathbb{E}(e^{X_ny})}{n!}\,z^n
\]
satisfies the nonlinear equation
\begin{align}\label{PPL-PGF}
\begin{split}
    \tilde{F}(z,y)+\frac{\partial}{\partial z}\tilde{F}(z,y)
    &=\tilde{F}\left(\frac z2,y\right)^2+ze^{2y+e^yz/2-z}
    \tilde{F}\left(\frac{e^yz}2,y\right)\\
    &\qquad -ze^{-z/2}\tilde{F}\left(\frac z2,y\right)
    + \frac{z^2}4\,e^{-z}\left(e^{3y}-1\right)^2,
\end{split}
\end{align}
with the initial condition $\tilde{F}(0,y)=1$.

\paragraph{The expected PPL.}
By (\ref{PPL-PGF}), we obtain the differential-functional equation
for $\tilde{f}_1(z)$ by taking derivative with respect to $y$ and
then substituting $y=1$, giving
\begin{align} \label{PPL-f1z}
    \tilde{f}_1(z)+\tilde{f}'_1(z)
    =2\tilde{f}_1(z/2)+z(2+z/2)e^{-z/2},
\end{align}
with $f_1(0)=0$. The Laplace transform of $\tilde{f}_1$ satisfies
\begin{align*}
    \mathscr{L}[\tilde{f}_1;s]
    &= \frac{4}{s+1}\,\mathscr{L}[\tilde{f}_1;s]
    + \frac{16}{(1+2s)^3} \\
    &= 16\sum_{k\ge0} \frac{4^k}{(s+1)\cdots(2^{k-1}s+1)
    (2^{k+1}s+1)^3}.
\end{align*}
Then a straightforward application of the
Laplace-Mellin-de-Poissonization approach yields
\[
    \mathbb{E}(X_n)
    =\frac{n}{\log 2}\sum_{k\in\mathbb{Z}}\frac{G_1(2+\chi_k)}
    {\Gamma(2+\chi_k)}\, n^{\chi_k}+O(1),
\]
where
\[
    G_1(\omega)
    :=16\int_{0}^\infty\frac{s^{\omega-1}}
    {Q(-s)(2s+1)^3}\dd s\qquad(\Re(\omega)>0).
\]
The $O(1)$-term can be further refined by the same analysis. In
particular, we get an alternative expression for $C_w$
\[
    C_w
    = \frac{G_1(2)}{\log 2}
    =\frac{16}{\log2}\int_{0}^{\infty}\frac{s}
    {Q(-s)(2s+1)^3}\dd s
    \approx1.10302\,66959\cdots.
\]
That the two expressions of $C_w$ are identical can be proved by
standard calculus of residues; see \cite{flajolet92a} for similar
details.

\paragraph{The variance of the PPL.}
Again from (\ref{PPL-PGF}), we derive the equation for the Poisson
generating function $\tilde{f}_2(z)$ of the second moment of $X_n$
\begin{align} \label{PPL-f2z}
\begin{split}
    \tilde{f}_2(z)+\tilde{f}'_2(z)
    &=2\tilde{f}_2(z/2)+2\tilde{f}_1(z/2)^2
    +\frac92\,z^2e^{-z} \\
    &\qquad +ze^{-z/2} \left((z+4)\tilde{f}_1(z/2)
    +z\tilde{f}'_1(z/2)
    +\frac{z^2+10z+16}4\right),
\end{split}
\end{align}
with $\tilde{f}_2(0)=0$.

Let $\tilde{V}(z)=\tilde{f}_2(z)-\tilde{f}_1(z)^2-z\tilde{f}'_1(z)^2$.
Then, by (\ref{PPL-f1z}), (\ref{PPL-f2z}) and Lemma~\ref{lmm-Vz-dfe},
\[
    \tilde{V}(z)+\tilde{V}'(z)
    =2\tilde{V}(z/2)+\tilde{g}_2(z),
\]
with $\tilde{V}(0)=0$, where $\tilde{g}_2$ is defined in
(\ref{PPL-g2z}).

Applying again the Laplace-Mellin-de-Poissonization approach, we
deduce (\ref{VPPL}). In particular, the mean value of the periodic
function $P_w$ is given by
\[
    \frac{G_2(2)}{\log2}
    = \frac1{\log 2}\int_0^\infty\frac{s}{Q(-2s)}
    \int_0^\infty e^{-zs}\tilde{g}_2(z)\dd z \dd s.
\]

\subsection{The number of leaves}

The leaves of a tree are the locations where the nodes holding
new-coming keys will be connected; thus different types of data
fields can be used to save memory, notably for $b$-DSTs. The number
of leaves then provides a quick and simpler look at the ``fringes''
of a tree. Such nodes are sometimes referred to as the
external-internal nodes or internal endnodes in the literature; see
\cite{drmota09a,flajolet86d,kirschenhofer91a,prodinger92a}.

Let $X_n$ denote the number of leaves in a random DST of $n$ keys.
Then $X_n$ satisfies the recurrence
\begin{align}\label{leaves-rr}
    X_{n+1}
    \stackrel{d}{=} X_{B_n}+ X_{n-B_n}^* \qquad(n\ge1),
\end{align}
with $X_0=0$ and $X_1=1$, where $B_n \sim \text{Binomial}(n;1/2)$.

Flajolet and Sedgewick \cite{flajolet86d}, solving an open question
raised by Knuth, showed that
\[
    \mathbb{E}(X_n)
    = n\left(C_{\textit{fs}}+\varpi_{\textit{fs}}(\log_2n)\right)
    +O(n^{1/2}),
\]
where $\varpi_{\textit{fs}}(t)$ is a smooth, $1$-periodic function and
\begin{align*}
    C_{\textit{fs}}
    &= 1+\sum_{k\ge1} \frac{k}{Q_k2^k}\sum_{1\le j\le k}
    \frac1{2^j-1}-\frac{1}{Q_\infty}\left(\frac{1}{\log
    2}+\left(\sum_{k\ge1}\frac1{2^k-1}\right)^2-
    \sum_{k\ge1}\frac1{2^k-1}\right) \\
    &\approx 0.37204\,86812\cdots.
\end{align*}
A finer approximation, together with the alternative (and
numerically better) expression
\[
    C_{\textit{fs}}
    = 1+\sum_{k\ge1}\frac1{2^k-1}
    -\frac1{Q_\infty} \left(\frac1{\log 2}
    + \sum_{k\ge1}\frac{(-1)^kk}{Q_k(2^k-1)2^{k(k+1)/2}}\right),
\]
was derived by Kirschenhofer and Prodinger \cite{kirschenhofer88d};
see also \cite{prodinger92a}. They proved additionally the
asymptotic linearity of the variance
\[
    \mathbb{V}(X_n)
    \sim n\left(C_{\textit{kp}}+\varpi_{\textit{kp}}(\log_2n)\right),
\]
where $\varpi_{\textit{kp}}$ is a smooth, $1$-periodic function with
mean zero and a long, complicated expression is given for the
leading constant $C_{\textit{kp}}$. We derive different forms for
these two asymptotic approximations.

Define
\begin{align} \label{leaves-g2}
    \tilde{g}_2(z)
    =z\tilde{f}''_1(z)^2+e^{-z}\left(1-e^{-z}(1+z)
    +2z\tilde{f}'_1(z/2)-4\tilde{f}_1(z/2)\right),
\end{align}
where $\tilde{f}_1(z) := e^{-z}\sum_{n\ge0}\mathbb{E}(X_n)z^n/n!$.
\begin{thm} \label{thm-mv-leaves}
The mean and the variance of the number of leaves are both
asymptotically linear with the approximations
\begin{align*}
    \mathbb{E}(X_n)
    &= \frac n{\log 2}\sum_{k\in\mathbb{Z}} \frac{G_1(2+\chi_k)}
    {\Gamma(1+\chi_k)}\,n^{\chi_k} + O(1), \\
    \mathbb{V}(X_n)
    &= \frac n{\log 2}\sum_{k\in\mathbb{Z}} \frac{G_2(2+\chi_k)}
    {\Gamma(2+\chi_k)}\,n^{\chi_k}  + O(1),
\end{align*}
where the two series are absolutely convergent with $G_1, G_2$
defined by
\begin{align*}
    G_1(\omega)
    &=\int_0^\infty\frac{s^{\omega-1}}{(s+1)Q(-2s)}\dd s,\\
    G_2(\omega)
    &=\int_0^\infty\frac{s^{\omega-1}}{Q(-2s)}
    \int_{0}^{\infty}e^{-zs}\tilde{g}_2(z)\dd z\dd s,
\end{align*}
for $\Re(\omega)>0$.
\end{thm}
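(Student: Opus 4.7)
The plan is to apply the Laplace--Mellin--de-Poissonization framework of Section~\ref{dst} to the number of leaves, the only subtlety being the initial irregularity $X_0=0$, $X_1=1$. First I would absorb the base case into the recurrence by writing $X_{n+1} \stackrel{d}{=} X_{B_n} + X_{n-B_n}^* + \mathbf{1}_{\{n=0\}}$ for $n\ge 0$, so the bivariate Poisson generating function $\tilde{F}(z,y)$ satisfies
\[
    \tilde{F}(z,y) + \tilde{F}'(z,y)
    = (e^y - 1)e^{-z} + \tilde{F}(z/2,y)^2,
    \qquad \tilde{F}(0,y)=1.
\]
Differentiating once and twice in $y$ at $y=0$ yields
\[
    \tilde{f}_1(z) + \tilde{f}_1'(z) = 2\tilde{f}_1(z/2) + e^{-z},
\]
\[
    \tilde{f}_2(z) + \tilde{f}_2'(z)
    = 2\tilde{f}_2(z/2) + e^{-z} + 2\tilde{f}_1(z/2)^2,
\]
with $\tilde{f}_1(0)=\tilde{f}_2(0)=0$.

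For the mean, taking the Laplace transform gives $(s+1)\mathscr{L}[\tilde{f}_1;s] = 4\mathscr{L}[\tilde{f}_1;2s] + (s+1)^{-1}$. Dividing by $Q(-s)$ and using the identity $Q(-2s)=(s+1)Q(-s)$ reduces this to $\bar{\mathscr{L}}[\tilde{f}_1;s] = 4\bar{\mathscr{L}}[\tilde{f}_1;2s] + 1/\bigl((s+1)Q(-2s)\bigr)$. The Mellin transform in $s$ then produces $\mathscr{M}[\bar{\mathscr{L}};\omega] = G_1(\omega)/(1-2^{2-\omega})$ for $\Re(\omega)>2$, with $G_1$ analytic in $\Re(\omega)>0$. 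A standard contour shift past the simple poles $\omega = 2+\chi_k$, together with the Taylor expansion $Q(-s) = 1 + s + O(s^2)$ near $s=0$ and the inversion Proposition~\ref{prop-inv-laplace}, transfers this into the large-$z$ asymptotic for $\tilde{f}_1(z)$; Proposition~\ref{prop-PC-asymp} then converts it into the stated expansion for $\mathbb{E}(X_n)$.

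For the variance, I would invoke Lemma~\ref{lmm-Vz-dfe} with $\tilde{h}_1(z) = e^{-z}$ and $\tilde{h}_2(z) = e^{-z} + 2\tilde{f}_1(z/2)^2$. A short simplification checks that the lemma's output coincides with the $\tilde{g}_2$ of (\ref{leaves-g2}); hence $\tilde{V}(z) := \tilde{f}_2(z) - \tilde{f}_1(z)^2 - z\tilde{f}_1'(z)^2$ satisfies $\tilde{V} + \tilde{V}' = 2\tilde{V}(z/2) + \tilde{g}_2(z)$ with $\tilde{V}(0)=0$. The remaining Laplace--Mellin steps are identical to those for $\tilde{f}_1$, yielding $\mathscr{M}[\bar{\mathscr{L}}[\tilde{V};\cdot];\omega] = G_2(\omega)/(1-2^{2-\omega})$, and residue extraction at $\omega = 2+\chi_k$ followed by Laplace inversion produces the stated $n$-times-periodic asymptotic for $\tilde{V}(n)$.

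The JS-admissibility required by Proposition~\ref{prop-PC-asymp} is routine: $e^{-z}\in\JS$, so Lemma~\ref{lm-closure} and Proposition~\ref{prop-dst-tr} place $\tilde{f}_1$, $\tilde{f}_2$, and $\tilde{V}$ in $\JS$. The main obstacle will be the analytic bookkeeping on $\tilde{g}_2$: using the large-$|z|$ asymptotic for $\tilde{f}_1$ derived in the mean step, I would verify $\tilde{g}_2(z) = O(|z|^{-1})$ as $|z|\to\infty$ in the sector $|\arg z|\le\pi/2-\ve$, and $\tilde{g}_2(z) = O(|z|)$ as $z\to 0$. The delicate point is that the individually $O(1)$ contributions inside $e^{-z}\bigl(1 - e^{-z}(1+z) + 2z\tilde{f}_1'(z/2) - 4\tilde{f}_1(z/2)\bigr)$ must combine with $z\tilde{f}_1''(z)^2$ to give inverse-linear decay---this is precisely what forces $\mathbb{V}(X_n)$ to be of linear rather than super-linear order. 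Once these bounds are in hand, the fundamental strip $\Re(\omega)>0$ of $G_2$ is located, the Mellin contour shift and Proposition~\ref{prop-inv-laplace} apply, and the Poisson-Charlier bound on $\mathbb{V}(X_n) - \tilde{V}(n)$ giving the $O(1)$ remainder is exactly as in Theorem~\ref{thm-KPS}.
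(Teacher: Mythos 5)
Your proposal follows the paper's approach step for step: the same absorption of the base case into the recurrence, the same bivariate and moment Poisson-generating-function equations, and the same invocation of Lemma~\ref{lmm-Vz-dfe} with $\tilde{h}_1(z) = e^{-z}$ and $\tilde{h}_2(z) = e^{-z} + 2\tilde{f}_1(z/2)^2$, which indeed reproduces the $\tilde{g}_2$ of (\ref{leaves-g2}). One small clarification: the bound $\tilde{g}_2(z) = O(|z|^{-1})$ in the sector $|\arg z|\le\pi/2-\ve$ does not rest on a cancellation between $z\tilde{f}_1''(z)^2$ and the $e^{-z}(\cdots)$ group---the prefactor $e^{-z}$ already makes that group exponentially small there, so $z\tilde{f}_1''(z)^2$ alone furnishes the inverse-linear decay, the cancellation responsible for the linearity of the variance being the one that Lemma~\ref{lmm-Vz-dfe} has already carried out in passing from $\tilde{f}_2$ to $\tilde{V}$.
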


We see in particular that
\begin{align}
    C_{\textit{fs}}
    &= \frac1{\log2}\int_0^\infty
    \frac{s}{(s+1)Q(-2s)}\dd s,\nonumber \\
    C_{\textit{kp}}
    &= \frac1{\log2}\int_0^\infty\frac{s}{Q(-2s)}
    \int_{0}^{\infty}e^{-zs}\tilde{g}_2(z)\dd z\dd s.\label{C-kp}
\end{align}

\paragraph{Sketch of proof.} From (\ref{leaves-rr}), we derive the
equation for the bivariate generating function $\tilde{F}(z,y) :=
e^{-z}\sum_{n\ge0} \mathbb{E}(e^{X_ny}) z^n/n!$
\[
    \tilde{F}(z,y) + \frac{\partial}{\partial z}\tilde{F}(z,y)
    = \tilde{F}\left(\frac z2,y\right)^2 + \left(e^y-1\right)e^{-z},
\]
with $\tilde{F}(0,y)=1$. Then the Poisson generating functions of
the first two moments satisfy
\begin{align}
    \tilde{f}_1(z) + \tilde{f}_1'(z)
    &= 2\tilde{f}_1(z/2) + e^{-z},\label{leaves-f1-dfe}\\
    \tilde{f}_2(z) + \tilde{f}_2'(z)
    &= 2\tilde{f}_2(z/2) + 2\tilde{f}_1(z/2)^2+e^{-z},
    \nonumber
\end{align}
with $\tilde{f}_1(0)=\tilde{f}_2(0)$. Consequently, the function
$\tilde{V}(z) := \tilde{f}_2(z) - \tilde{f}_1(z)^2
-z\tilde{f}_1'(z)^2$ satisfies
\[
    \tilde{V}(z) + \tilde{V}'(z)
    = 2\tilde{V}(z/2) + \tilde{g}_2(z),
\]
with $\tilde{V}(0)=0$, where $\tilde{g}_2$ is given in
(\ref{leaves-g2}). The remaining analysis follows the same pattern
as above and is omitted.

We provide instead some details for the numerical evaluation of the
constant $C_{\textit{kp}}$ as defined in (\ref{C-kp}), which is
similar to the case of internal path-length of DSTs.

By applying the Laplace transform to both sides of (\ref{leaves-f1-dfe})
and by iteration, we get
\[
    \mathscr{L}[\tilde{f}_1;s]
    =\sum_{k\ge 0}\frac{4^k}
    {(s+1)(2s+1)\cdots(2^{k-1}s+1)(2^ks+1)^2}.
\]
Since the inverse Laplace transform derived from the partial
fraction expansion of this series is divergent, we consider the
function $\hat{f}_1(z) := \tilde{f}_1(z)-z+z^2/2$ for which the
equation (\ref{leaves-f1-dfe}) becomes
\[
    \hat{f}_1(z)+\hat{f}'_1(z)
    = 2\hat{f}_1(z/2)-1+z+\frac{z^2}{4}+e^{-z},
\]
with $\hat{f}_1(0)=0$, and we have
\[
    \mathscr{L}[\hat{f}_1;s]
    =\frac{1}{2s^3}\sum_{k\ge 0}\frac{3\cdot 2^ks+1}{2^k(s+1)
    \cdots (2^{k-1}s+1)(2^ks+1)^2}.
\]

Then by the partial fraction expansion
\begin{align*}
    \frac{3\cdot 2^ks+1}{(s+1)\cdots(2^{k-1}s+1)(2^ks+1)^2}
    =&\sum_{0\le \ell<k}\frac{(-1)^{k-\ell}(3\cdot 2^{k-\ell}-1)
    2^{-\binom{k-\ell+1}{2}}}{(2^{k-\ell}-1)Q_\ell Q_{k-\ell}}
    \cdot\frac{1}{2^{\ell}s+1}\\
    &\ +\frac1{Q_k}\left(3+2\sum_{1\le j\le k}
    \frac{1}{2^{j}-1}\right)
    \frac{1}{2^ks+1}-\frac{2}{Q_k(2^ks+1)^2},
\end{align*}
we obtain
\[
    \mathscr{L}[\hat{f}_1;s]
    =\frac{1}{2s^3}\sum_{\ell\ge 0}\frac{1}{2^\ell Q_\ell}
    \left(\frac{\delta_\ell}{2^\ell s+1}
    -\frac{2}{(2^\ell s+1)^2}\right),
\]
where
\[
    \delta_\ell
    =3+2\sum_{1\le j\le \ell}\frac{1}{2^{j}-1}
    +\sum_{j\ge1}\frac{(-1)^{j}(3\cdot2^{j}-1)
    2^{-\binom{j+1}{2}}}{(2^{j}-1)2^jQ_{j}}.
\]
Obviously, $\lim_{\ell\rightarrow\infty}\delta_\ell=4$. Now, by
the inverse Laplace transform,
\begin{align*}
    \hat{f}_1(z)
    =\frac{1}{2}\sum_{\ell\ge0}\frac{1}{Q_\ell}\Biggl(
    &2^\ell\delta_\ell\left(1-\frac{z}{2^\ell}
    +\frac{z^2}{2^{2\ell+1}}-e^{-z/2^\ell}\right)\\
    &-2^{\ell+1}\left(3-\frac{z}{2^{\ell-1}}+\frac{z^2}{2^{2\ell+1}}
    -3e^{-z/2^\ell}\right)+2ze^{-z/2^\ell}\Biggr),
\end{align*}
which converges for all $z$; also from \cite{flajolet86d} we have
\[
    \hat{f}_1(z)
    = \sum_{n\ge3}\frac{(-1)^{n-1} z^n}{n!}
    Q(n-2)\sum_{0\le j\le n-2}\frac1{Q(j)}.
\]
Then the first and the second derivatives are given by
\begin{align*}
    \hat{f}'_1(z)
    &=\frac{1}{2}\sum_{\ell\ge0}\frac{1}{Q_\ell}
    \left(\delta_\ell\left(-1+z/2^\ell+e^{-z/2^\ell}\right)
    +4-\frac{z}{2^{\ell-1}}-4e^{-z/2^\ell}-
    \frac{z}{2^{\ell-1}}\,e^{-z/2^\ell}\right),\\
    \hat{f}''_1(z)
    &=\frac{1}{2}\sum_{\ell\ge0}\frac{1}{2^\ell Q_\ell}
    \left(\delta_\ell\left(1-e^{-z/2^\ell}\right)-2
    +2e^{-z/2^\ell}+\frac{z}{2^{\ell-1}}\,e^{-z/2^\ell}\right).
\end{align*}

Now the constant $C_{\textit{kp}}$ can be expressed in terms of the
integrals of $\hat{f}_1$ as follows.
\begin{align*}
    (\log 2)C_{kp}
    &=\int_{0}^{\infty}\frac{s}{Q(-2s)(s+1)(s+2)^2}
    \dd s+\int_{0}^{\infty}\frac{s}{Q(-2s)}
    \int_{0}^{\infty}e^{-zs}z(\hat{f}''_1(z)-1)^2\dd z\dd s\\
    &\ +2\int_{0}^{\infty}\frac{s}{Q(-2s)}
    \int_{0}^{\infty}e^{-z(s+1)}\left(z-\frac{1}{s+1}\right)
    (\hat{f}'_1(z/2)-z)\dd z\dd s.
\end{align*}
And we get $C_{kp}\approx0.034203\cdots$.

\paragraph{A general weighted sum of node-types for $b$-DSTs.}
For $b\ge2$, we can consider $X_{n}^{[j]},1\le j\le b$, the number
of leaves containing $j$ records in a random $b$-DST with bucket
capacity $b$ built from $n$ records. Let also $X_{n}^{[b+1]}$ be the
number of internal (non-leaf) nodes. Define
\[
    X_{n}
    =\sum_{1\le j\le b+1}a_jX_{n}^{[j]},
\]
where $a_1,\ldots,a_{b+1}$ are arbitrary real numbers. By a
straightforward computation
\begin{equation*}
    \sum_{0\le j\le b}\binom{b}{j}
    \frac{\partial^j}{\partial z^j}\tilde{F}(z,y)
    =e^{a_{b+1}y}\tilde{F}\left(\frac{z}{2},y\right)^2
    +e^{-z}\left(e^{a_by}-e^{a_{b+1}y}\right),
\end{equation*}
with $\tilde{F}(0,y)=1$. Then our approach can be applied and leads
to the same type of results as Theorem~\ref{thm-mv-leaves} with
different $G_1$ and $G_2$; the resulting expressions for the
variance are more explicit and simpler than those given in
\cite{hubalek02a}.

\subsection{Colless index: the differential path-length (DPL)}

The DPL of a tree is defined to be the sum over all nodes of the
absolute difference of the two subtree-sizes of each node as
depicted below.
\begin{center}
\begin{tikzpicture}[scale=0.5]
\draw[line width=0.8pt] (2,-2) -- (0,-1);
\draw[line width=0.8pt] (-2,-2) -- (0,-1);
\node[draw,fill,shape=circle, inner sep=0.8pt]  at (0,-1) { };
\draw[line width=0.8pt] (-2,-2) -- (-3.5,-4) --  (-.6,-4)  -- cycle;
\draw[line width=0.8pt] (-2.05,-3.5) node[text width=3cm,text centered]
{\small $\mathcal{T}_{\text{left}}$};
\draw[line width=0.8pt] (2,-2) -- (3.5,-4) -- (.5,-4) -- cycle;
\draw[line width=0.8pt] (2,-3.5) node[text width=3cm, text centered]
{\small $\mathcal{T}_{\text{right}}$};
\draw[line width=0.8pt] (7,-2.7) node[text width=4cm, text centered]
{\small $\text{DPL}=\sum\limits_{\text{all nodes }}
|\mathcal{T}_{\text{left}} -\mathcal{T}_{\text{right}}|$};
\end{tikzpicture}
\end{center}

Properties of such a path length in random binary search trees have
long been investigated in the systematic biology literature; see
\cite{blum06a} and the references therein.

Let $X_n$ denote the DPL of a random DST of $n$ input-strings. Then
by definition and by our independence assumption, we have the
recurrence for the moment generating function
\begin{align}\label{DPL-MGF}
    M_{n+1}(y)
    = 2^{-n}\sum_{0\le j\le n} \binom{n}{j}
    M_j(y) M_{n-j}(y) e^{|n-2j|y}\qquad(n\ge0),
\end{align}
with $M_0(y)=1$.

Let also
\begin{align*}
    \tilde{g}_2(z)
    &:= z\tilde{f}_1''(z)^2 +z -\tilde{h}_1(z)^2
    - z\tilde{h}_1'(z)^2
    -4\tilde{h}_1(z)\tilde{f}(z/2)
    -2z\tilde{h}_1'(z)\tilde{f}_1'(z/2)
    +4\tilde{h}_c(z),
\end{align*}
where $\tilde{f}_1(z)$ is the Poisson generating function of
$\mathbb{E}(X_n)$ and $\tilde{h}_c(z)$ is defined by
\begin{align} \label{hcz-def}
    \tilde{h}_c(z)
    := e^{-z}\sum_{n\ge0} \frac{(z/2)^n}{n!}\sum_{0\le k\le n}
    \binom{n}{k}\mathbb{E}(X_k)|n-2k|.
\end{align}

\begin{thm} The mean and the variance of the DPL of random DSTs
satisfy the asymptotic relations
\begin{align}
    \mathbb{E}(X_n)
    &= n P_{d,\mu}(\log_2n) - \frac{\sqrt{2n}}{\sqrt{\pi}
    (\sqrt{2}-1)} +O(1),\label{Mean-DPL}\\
    \mathbb{V}(X_n)
    &= \left(1-\frac2\pi\right)n\log_2n + nP_{d,\sigma}(\log_2n)
    +O(n^{1/2}), \label{Var-DPL}
\end{align}
where $P_{d,\mu}$ and $P_{d,\sigma}$ are explicitly computable,
smooth, $1$-periodic functions.
\end{thm}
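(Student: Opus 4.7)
The plan is to follow the Laplace–Mellin–de-Poissonization framework developed earlier in the paper. Starting from (\ref{DPL-MGF}), taking one and two derivatives in $y$ at $y=0$, the Poisson generating functions $\tilde{f}_1$ and $\tilde{f}_2$ of $\mathbb{E}(X_n)$ and $\mathbb{E}(X_n^2)$ satisfy
\[
    \tilde{f}_1+\tilde{f}_1'=2\tilde{f}_1(z/2)+\tilde{h}_1(z),\qquad
    \tilde{f}_2+\tilde{f}_2'=2\tilde{f}_2(z/2)+2\tilde{f}_1(z/2)^2+z+4\tilde{h}_c(z),
\]
where $\tilde{h}_1$ is the Poisson generating function of $\mathbb{E}|n-2B_n|$, $\tilde{h}_c$ is as in (\ref{hcz-def}), and the $z$ comes from $\mathbb{E}((n-2B_n)^2)=n$. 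The new feature compared with Section~\ref{dst} is that the toll $\tilde{h}_1$ is not polynomial: using either the exact expression in terms of central binomial coefficients or a direct Mellin transform on the sequence $\mathbb{E}|n-2B_n|$, one obtains $\tilde{h}_1(z)=\sqrt{2z/\pi}+O(1)$ uniformly in $|\arg z|\le\pi/2-\ve$, with no log-periodic oscillation at the $\sqrt{z}$ scale.

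For the mean, the analysis of Section~\ref{mean-dst} carries over. Laplace-transforming gives $(s+1)\mathscr{L}[\tilde{f}_1;s]=4\mathscr{L}[\tilde{f}_1;2s]+\mathscr{L}[\tilde{h}_1;s]$; after dividing by $Q(-s)$ and taking the Mellin transform, the factor $(1-2^{2-\omega})^{-1}$ contributes poles at $\omega=2+\chi_k$ (yielding the periodic $zP_{d,\mu}(\log_2 z)$), while the $s^{-3/2}$-behavior of $\mathscr{L}[\tilde{h}_1;s]$ near $s=0$ contributes a single pole at $\omega=3/2$. The $\sqrt{z}$-constant is then pinned down by substituting $c\sqrt{z}$ in the functional equation: $c\sqrt{z}=2c\sqrt{z/2}+\sqrt{2z/\pi}$ forces $c(1-\sqrt{2})=\sqrt{2/\pi}$, i.e., $c=-\sqrt{2/\pi}/(\sqrt{2}-1)$. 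JS-admissibility (which propagates through the transfer Proposition~\ref{prop-dst-tr} since $\tilde{h}_1\in\JS_{\!\!1/2,0}$) together with Proposition~\ref{prop-PC-asymp} then yields (\ref{Mean-DPL}).

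For the variance, set $\tilde{V}(z):=\tilde{f}_2(z)-\tilde{f}_1(z)^2-z\tilde{f}_1'(z)^2$. The extension of Lemma~\ref{lmm-Vz-dfe} accommodating the extra $4\tilde{h}_c$ term gives $\tilde{V}+\tilde{V}'=2\tilde{V}(z/2)+\tilde{g}_2(z)$ with $\tilde{g}_2$ as in the statement. The key asymptotic is $\tilde{g}_2(z)=(1-2/\pi)z+O(\sqrt{z})$, hinging on the cancellation of the two individually-$z^{3/2}$-sized quantities $4\tilde{h}_c(z)$ and $-4\tilde{h}_1(z)\tilde{f}_1(z/2)$. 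This cancellation is forced by the symmetry of $B_n$ about $n/2$: writing $g(k)=\mathbb{E}(X_k)$ and expanding the even function $g(k)+g(n-k)=2g(n/2)+g''(n/2)(k-n/2)^2+\cdots$ gives
\[
    \mathbb{E}[X_{B_n}|n-2B_n|]=g(n/2)\mathbb{E}|n-2B_n|+\tfrac12 g''(n/2)\mathbb{E}|n-2B_n|^3+\cdots,
\]
whose leading term matches the Poissonized $\tilde{h}_1(z)\tilde{f}_1(z/2)$, while the next term is $O(n^{-1})\cdot O(n^{3/2})=O(\sqrt{n})$. Combining with $\tilde{h}_1^2=2z/\pi+O(1)$ and $z\tilde{h}_1'^2=O(1)$ and $z\tilde{f}_1''(z)^2=O(1)$ pins the $z$-coefficient of $\tilde{g}_2$ to $1-2/\pi$. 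Laplace–Mellin then generates a \emph{double} pole at $\omega=2$ (from the linear toll times $(1-2^{2-\omega})^{-1}$) plus simple poles at $\omega=2+\chi_k$, so that $\tilde{V}(z)=(1-2/\pi)z\log_2 z+zP_{d,\sigma}(\log_2 z)+O(\sqrt{z})$; de-Poissonization delivers (\ref{Var-DPL}).

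The hard part will be the rigorous control of the $z^{3/2}$-cancellation in $\tilde{g}_2(z)$: without it, the variance would come out of order $n^{3/2}$. Because $\tilde{f}_1$ carries both a smooth periodic factor $nP_{d,\mu}(\log_2 n)$ and a non-periodic $\sqrt{n}$ correction, the expansion of $g(k)+g(n-k)$ must be tracked at two scales simultaneously to identify the exact coefficient $1-2/\pi$ and, separately, the residual $O(\sqrt{z})$ responsible for the $O(n^{1/2})$ error in the variance. A minor technical point is verifying JS-admissibility of $\tilde{h}_1$ in spite of its non-polynomial toll, but this follows from $\mathbb{E}|n-2B_n|\ge0$ growing only like $\sqrt{n}$, together with the same Hankel-type argument used in Proposition~\ref{prop-inv-laplace}.
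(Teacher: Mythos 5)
Your plan is correct and tracks the paper's proof closely: the same recurrence-to-Poisson-generating-function setup, the same Laplace--Mellin--de-Poissonization pipeline for $\tilde{f}_1$ and for the corrected Poissonized variance $\tilde{V}=\tilde{f}_2-\tilde{f}_1^2-z\tilde{f}_1'^2$, and the same identification of the linear leading term $\tilde{g}_2(z)=(1-2/\pi)z+O(\sqrt{z})$ as the source of the $n\log_2 n$ variance via a double pole of $G_2(\omega)/(1-2^{2-\omega})$ at $\omega=2$. Your heuristic for the $\sqrt{z}$-coefficient in the mean via the ansatz $c\sqrt{z}$ is also consistent with the residue computation.

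Where you diverge from the paper, and where your argument is still a sketch, is the crucial cancellation $\tilde{h}_c(z)-\tilde{h}_1(z)\tilde{f}_1(z/2)=O(\sqrt{z})$. You propose a probabilistic Taylor-expansion: using the symmetry of $B_n$ and expanding $g(k)+g(n-k)$ about $k=n/2$. This captures the mechanism but, as you say, is hard to make rigorous, because $g(k)=\mathbb{E}(X_k)$ is an integer-indexed sequence carrying both smooth periodic oscillations and a non-analytic $\sqrt{k}$ term, while what the Laplace--Mellin pipeline consumes is a statement about the entire Poisson generating function $\tilde{h}_c(z)$ valid in a sector, not a pointwise statement about binomial averages. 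The paper sidesteps this mismatch by working entirely at the generating-function level: it represents $h_c(z):=e^{z}\tilde{h}_c(z)$ as two contour integrals of the form $\frac{1}{2\pi i}\oint f_1(z/(2w))\,e^{wz/2}(w-1)^{-2}\,\dd w$, separates off $\tilde{f}_1(z/2)\tilde{h}_1(z)$ using the corresponding integral for $h_1$, and bounds the leftover by noting that the remainder $R_z(w)$ (a second-order Taylor remainder of $\tilde{f}_1(z/(2w))+\tilde{f}_1(wz/2)$ about $w=1$) is analytic where the integrand had its double pole, so a contour deformation of the kind already used to verify condition \textbf{(O)} of JS-admissibility gives $O(|z|^{1/2})$. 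This is the missing ingredient in your sketch. Separately, your claim that JS-admissibility of $\tilde{h}_1$ ``follows from $\mathbb{E}|n-2B_n|\ge 0$ growing only like $\sqrt{n}$'' is too weak: nonnegativity of Taylor coefficients only gives $|h_1(z)|\le h_1(|z|)=O(|z|^{1/2}e^{|z|})$ on every ray, which does not deliver the $O(e^{(1-\ve)|z|})$ required outside $|\arg z|<\ve$. The paper gets the genuine decay from the explicit modified-Bessel representation $\tilde{h}_1(z)=ze^{-z}(I_0(z)+I_1(z))$ together with the classical asymptotics (\ref{bessel}), which exhibit the needed $e^{\Re z}$-behavior.
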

These results are to be compared with the known results for random
binary search trees for which the DPL has mean of order $n\log n$
and variance of order $n^2$; see \cite{blum06a}.

\paragraph{Expected DPL.}
The approach we follow here for deriving the differential-functional
equations satisfied by the Poisson generating functions of the first
two moments is slightly different from the one we used since the
corresponding nonlinear equation for the bivariate generating
function $F(z,y) := \sum_{n\ge0} M_n(y)z^n/n!$ is very involved as
given below.
\begin{align*}
    \frac{\partial}{\partial z}\,F(z,y)-1
    &= F\left(\frac{e^yz}{2},y\right)
    F\left(\frac{e^{-y}z}{2},y\right) \\
    &\quad +\frac1{2\pi i}\oint_{|w|=r>0}
    F\left(\frac{wz}2,y\right)
    \left(\frac{F(e^yz/2,y)-w^{-1}e^{-y}F(z/(2w),y)}
    {w-e^{-y}}\right.\\
    &\qquad\hspace*{3.6cm} \left.-\frac{F(e^{-y}z/2,y)
    -w^{-1}e^{y}F(z/(2w),y)}{w-e^{y}} \right)\dd w,
\end{align*}
with $F(0,y)=1$.

We use instead a more elementary argument. From the recurrence
(\ref{DPL-MGF}), we obtain, with $\mu_n := \mathbb{E}(X_n)$,
\[
    \mu_{n+1}
    = 2^{1-n}\sum_{0\le k\le n}\binom{n}{k}\mu_k
    + 2^{-n}\sum_{0\le k\le n}\binom{n}{k}|n-2k|
    \qquad(n\ge1),
\]
the initial condition being $\mu_0=0$. Then the Poisson generating
function of $X_n$ satisfies the equation
\[
    \tilde{f}_1(z) + \tilde{f}_1'(z)
    = 2\tilde{f}_1(z/2)+ \tilde{h}_1(z),
\]
with $\tilde{f}_1(0)=0$, where $\tilde{h}_1$ is given by
\begin{align*}
    \tilde{h}_1(z)
    &= e^{-z}\sum_{n\ge0}\frac{(z/2)^n}{n!}
    \sum_{0\le k\le n} \binom{n}{k}|n-2k|\\
    &= ze^{-z}\left(I_0(z)+I_1(z)\right),
\end{align*}
where we used the identity
\[
    \sum_{0\le k\le n}\binom{n}{k}|n-2k|
    = \frac{2n!}{\lfloor n/2\rfloor!(\lceil n/2\rceil-1)!}
    \qquad(n\ge1),
\]
and $I_\alpha(z)$ denotes the modified Bessel functions
\[
    I_\alpha(z)
    := \sum_{n\ge 0} \frac{(z/2)^{2n+\alpha}}
    {n!\Gamma(n+\alpha+1)}.
\]

It is known (see \cite{whittaker1927a}) that, as $|z|\to\infty$,
\begin{align} \label{bessel}
    I_\alpha(z)
    = \begin{cases}
        \displaystyle \frac{e^z}{\sqrt{2\pi z}}\left(
        1+O(|z|^{-1})\right),&\mbox{if }|\arg(z)|\le \pi/2-\ve,\\
        \displaystyle O\left(|z|^{-1/2}\left(
        e^{\Re(z)}+e^{-\Re(z)}\right)\right),
        &\mbox{if }|\arg(z)|\le \pi,
    \end{cases},
\end{align}
the $O$-term holding uniformly in $z$ in each case. Thus, by
(\ref{bessel}), $\tilde{h}_1 \in\JS$ and
\[
    \tilde{h}_1(z)
    = \sqrt{\frac{2z}{\pi}}\left(1+O(|z|^{-1})\right) ,
\]
for $|z|\to\infty$ in $|\arg(z)|\le \pi/2-\ve$. Also
\[
    \mathscr{L}[\tilde{h}_1;s]
    = (s+2)^{-1/2}s^{-3/2} \qquad(\Re(s)>0).
\]
Thus we can apply the same approach and deduce that
\[
    \mathbb{E}(X_n)
    = \frac{n}{\log 2}\sum_{k\in\mathbb{Z}}
    \frac{G_1(2+\chi_k)}{\Gamma(2+\chi_k)}\,
    n^{\chi_k} - \frac{\sqrt{2n}}{\sqrt{2\pi}
    (\sqrt{2}-1)} +O(1),
\]
where $G_1(\omega)$ is the Mellin transform of
$\mathscr{L}[\tilde{h}_1;s]/Q(-2s)$
\[
    G_1(\omega)
    = \int_0^\infty \frac{s^{\omega-5/2}}{Q(-2s)\sqrt{s+2}}
    \dd s\qquad(\Re(\omega)>3/2).
\]
This proves (\ref{Mean-DPL}). Numerically, the mean value of the
dominant periodic function is $G_1(2)/\log 2\approx 1.33907\,46494$.

\paragraph{The variance of DPL.} Again from (\ref{DPL-MGF}), we have
the recurrence for the second moment $s_n := \mathbb{E}(X_n^2)$
\begin{align*}
    s_{n+1}
    = 2^{-n}\sum_{0\le k\le n}\binom{n}{k}\left(
    s_k + s_{n-k}+ (n-2k)^2 + 2\mu_k\mu_{n-k}
    + 4\mu_k|n-2k|\right),
\end{align*}
for $n\ge1$ with $s_0=s_1=0$. Since
\begin{align} \label{diff-2}
    2^{-n}\sum_{0\le k\le n}\binom{n}{k}(n-2k)^2
    = n,
\end{align}
we see that the Poisson generating function of $s_n$ satisfies the
nonlinear equation
\[
    \tilde{f}_2(z) + \tilde{f}'_2(z)
    = 2\tilde{f}_2(z/2) +2\tilde{f}_1(z/2)^2
    +z + 4\tilde{h}_c(z),
\]
with $\tilde{f}_2(0)=0$, where $\tilde{h}_c(z)$ is defined in
(\ref{hcz-def}).

\begin{lmm} The function $\tilde{h}_c$ is JS-admissible and satisfies
\begin{align} \label{hcz}
    \tilde{h}_c(z)
    = \tilde{h}_1(z)\tilde{f}_1(z/2) + O(|z|^{1/2}),
\end{align}
in the sector $|\arg(z)|\le \pi/2-\ve$.
\end{lmm}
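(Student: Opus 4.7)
The plan is to recognize $\tilde{h}_c$, $\tilde{h}_1$, and $\tilde{f}_1(z/2)$ as expectations involving two independent $\mathrm{Poisson}(z/2)$ random variables $N_1,N_2$ and reduce the approximation (\ref{hcz}) to a covariance estimate. Writing $\mu(k):=\mathbb{E}(X_k)$, one has for $z>0$ the identities
\begin{equation*}
\tilde{h}_c(z)=\mathbb{E}[\mu(N_1)|N_1-N_2|],\quad \tilde{h}_1(z)=\mathbb{E}|N_1-N_2|,\quad \tilde{f}_1(z/2)=\mathbb{E}\mu(N_1),
\end{equation*}
which extend to $|\arg z|\le\pi/2-\ve$ by analytic continuation of both sides. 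The remainder in (\ref{hcz}) is therefore exactly $\mathrm{Cov}(\mu(N_1),|N_1-N_2|)$, and the task reduces to bounding this covariance by $O(|z|^{1/2})$.

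The central obstacle is that the direct Cauchy--Schwarz bound yields only $O(z)$, because $\mu(k)\asymp k$ forces $\mathrm{Var}(\mu(N_1))\asymp z^2$. To recover the missing factor of $\sqrt z$ I would exploit the $N_1\leftrightarrow N_2$ symmetry
\begin{equation*}
\mathrm{Cov}(\mu(N_1),|N_1-N_2|)=\tfrac12\mathrm{Cov}\bigl(\mu(N_1)+\mu(N_2),|N_1-N_2|\bigr),
\end{equation*}
and then condition on $S:=N_1+N_2\sim\mathrm{Poisson}(z)$. Given $S$, one has $N_1\sim\mathrm{Bin}(S,1/2)$, symmetric about $S/2$, so writing $N_{1,2}=(S\pm D)/2$ with $D:=N_1-N_2$ and Taylor expanding $\mu((S\pm D)/2)$ around $S/2$ using (\ref{Mean-DPL}), the odd-in-$D$ terms cancel, while the quadratic term has coefficient $\mu''(S/2)=O(1/S)$ and contributes $O(\sqrt S)$ to the conditional covariance with $|D|$. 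The between-$S$ piece is controlled by the Poisson delta-method estimate $\mathrm{Cov}(S,\sqrt S)=O(\sqrt z)$. Averaging over $S$ preserves the order and gives the required $O(|z|^{1/2})$.

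For JS-admissibility, the identity (\ref{hcz}) together with Lemma~\ref{lm-closure} gives condition \textbf{(I)}: $\tilde{h}_1\in\JS$ from the closed form $\tilde{h}_1(z)=ze^{-z}(I_0(z)+I_1(z))$ and the Bessel asymptotics (\ref{bessel}); $\tilde{f}_1\in\JS$ from Proposition~\ref{prop-dst-tr} applied to its differential-functional equation; hence $\tilde{h}_1(z)\tilde{f}_1(z/2)\in\JS$, and adding an $O(|z|^{1/2})$ error in the sector preserves this. Condition \textbf{(O)} outside the sector follows directly from the defining Poisson series of $\tilde{h}_c$ via the coefficient bound $2^{-n}\sum_k\binom{n}{k}\mu(k)|n-2k|=O(n^{3/2})$ together with Bessel-type estimates paralleling those used for $\tilde{h}_1$.

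The hard part will be the sharp covariance estimate: a full factor of $\sqrt z$ beyond Cauchy--Schwarz must be recovered, and the recovery requires both the $N_1\leftrightarrow N_2$ symmetry and a controlled second-order Taylor expansion. Since $\mu$ is defined only on integers and carries the periodic oscillations inherited from (\ref{Mean-DPL}), the Taylor expansion actually proceeds through the Poissonized form $\tilde{f}_1$, with the discrepancy absorbed by Proposition~\ref{prop-PC-asymp}.
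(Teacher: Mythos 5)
Your probabilistic reading of the three quantities as $\mathbb{E}[\mu(N_1)|N_1-N_2|]$, $\mathbb{E}|N_1-N_2|$ and $\mathbb{E}\mu(N_1)$ with $N_1,N_2\sim\text{Poisson}(z/2)$ independent is correct, and the symmetrization $\text{Cov}(\mu(N_1),|N_1-N_2|)=\tfrac12\text{Cov}(\mu(N_1)+\mu(N_2),|N_1-N_2|)$ followed by conditioning on $S=N_1+N_2$ captures the same cancellation that the paper exploits via the $w\leftrightarrow 1/w$ symmetry of its contour integral. The heuristic tally ($\mu''(S/2)=O(1/S)$, $\text{Cov}(D^2,|D|\,|\,S)\asymp S^{3/2}$, $\text{Cov}(\mathbb{E}[A|S],\mathbb{E}[|D|\,|S])\asymp\sqrt z$) is plausible \emph{for real} $z>0$.

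The gap is that the lemma is a statement about the complex sector $|\arg z|\le\pi/2-\ve$, and the probabilistic argument lives on $\mathbb{R}_{>0}$. You assert the bound "extends by analytic continuation," but analytic continuation transports \emph{identities}, not \emph{estimates}: knowing that an entire function is $O(|z|^{1/2})$ on the positive real ray gives no control on a surrounding sector without something like a Phragm\'en--Lindel\"of argument, which in turn needs bounds on the sector's boundary rays $\arg z=\pm(\pi/2-\ve)$ — precisely what is missing. The paper sidesteps this by starting from the contour representation
\begin{align*}
\tilde{h}_c(z)
= \frac{e^{-z}}{2\pi i}\oint_{|w|=r<1}
    \frac{e^{z(w+1/w)/2}}{(w-1)^2}\left(
    \tilde{f}_1\!\left(\frac{z}{2w}\right)
    +\tilde{f}_1\!\left(\frac{wz}{2}\right) \right) \dd w,
\end{align*}
extracting $\tilde{f}_1(z/2)\tilde{h}_1(z)$ by a Taylor expansion of $\tilde{f}_1$ around $z/2$ inside the integral, and bounding the remainder by contour deformation; all of this is done directly for complex $z$ in the sector.

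Your condition \textbf{(O)} argument is also incomplete. The coefficient bound $b_n:=2^{-n}\sum_k\binom{n}{k}\mu(k)|n-2k|=O(n^{3/2})$ only yields $h_c(z)=\sum b_n z^n/n!=O(|z|^{3/2}e^{|z|})$, which does \emph{not} give the required $O(e^{(1-\ve)|z|})$ for $\ve\le|\arg z|\le\pi$; condition \textbf{(O)} hinges on cancellation when $\Re(z)<(1-\delta)|z|$. For $\tilde{h}_1$ this cancellation is visible through the Bessel closed form $\tilde{h}_1(z)=ze^{-z}(I_0(z)+I_1(z))$ and (\ref{bessel}), but $\tilde{h}_c$ has no such closed form because $\mu(k)$ is not a simple coefficient sequence; there is no "parallel Bessel-type estimate" to appeal to. In the paper, condition \textbf{(O)} is established from the contour representation by changes of variables and indentation of the circle $|w|=|z|$ around the pole at $w=z$, then invoking $\tilde{f}_1\in\JS$.

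So: the symmetry insight is the right one and is the genuine heart of the matter, but to make either the $O(|z|^{1/2})$ bound or condition \textbf{(O)} rigorous in the complex domain you will in practice be driven back to a contour-integral representation of $\tilde{h}_c$, as the paper does.
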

\begin{proof}
Observe first that
\begin{align*}
    h_1(z)
    &=\sum_{k\ge0} \frac{1}{k!} \left(\frac z2
    \right)^k \sum_{n\ge0} \frac{|n-k|}{n!}
    \left(\frac z2\right)^n\\
    &= 2\sum_{k\ge0}\frac{1}{k!}\left(\frac z2
    \right)^k\sum_{0\le j\le k} \frac{k-j}{j!}
    \left(\frac z2 \right)^j \\
    &= \frac{2}{2\pi i}\oint_{|w|=r}
    \frac{e^{z(w+1/w)/2}}{(w-1)^2} \dd w
    \qquad(r<1),
\end{align*}
since
\[
    \sum_{0\le j\le k} \frac{k-j}{j!}
    \left(\frac z2 \right)^j
    = [w^k]\frac{we^{zw/2}}{(w-1)^2}.
\]

On the other hand, since $f_1(z) = \sum_{n\ge0} \mu_n z^n/n!$, we
have, by the same argument,
\begin{align*}
    h_c(z) &:= e^z \tilde{h}_c(z)\\
    &= \sum_{k\ge0} \frac{\mu_k}{k!} \left(\frac z2
    \right)^k \sum_{n\ge0} \frac{|n-k|}{n!}
    \left(\frac z2\right)^n\\
    &= \sum_{k\ge0} \frac{\mu_k}{k!} \left(\frac z2
    \right)^k \left(\sum_{0\le n\le k} \frac{k-n}{n!}
    \left(\frac z2\right)^n+\sum_{n\ge k} \frac{n-k}{n!}
    \left(\frac z2\right)^n\right)\\
    &= \sum_{k\ge0} \frac{\mu_k}{k!} \left(\frac z2
    \right)^k \sum_{0\le n\le k} \frac{k-n}{n!}
    \left(\frac z2\right)^n +
    \sum_{n\ge0}\frac{1}{n!}\left(\frac z2\right)^n
    \sum_{0\le k\le n} (n-k) \frac{\mu_k}{k!} \left(\frac z2
    \right)^k\\
    &= \frac{1}{2\pi i}\oint_{|w|=r<1}
    f_1\left(\frac{z}{2w}\right) \frac{e^{wz/2}}{(w-1)^2} \dd w
    + \frac{1}{2\pi i}\oint_{|w|=r<1}f_1\left(\frac{wz}{2}\right)
    \frac{e^{z/(2w)}}{(w-1)^2} \dd w.
\end{align*}

To prove condition \textbf{(O)}, we start with changes of variables,
giving
\[
    h_c(z)
    = \frac{z}{2\pi i}\;\;\circ\hspace*{-.52cm}\oint_{|w|=|z|}
    f_1\left(\frac{w}{2}\right) \frac{e^{z^2/(2w)}}{(w-z)^2} \dd w
    + \frac{z}{2\pi i}\;\circ\hspace*{-.4cm}\oint_{|w|=|z|}
    f_1\left(\frac{w}{2}\right) \frac{e^{z^2/(2w)}}{(w-z)^2} \dd w,
\]
where the first integration circle is indented to the right to avoid
the polar singularity $w=z$, and the second to the left. By
splitting each integration contour into two parts, we obtain
\begin{align*}
    h_c(z)
    &= \frac{z}{2\pi i}\left(\supset\hspace*{-.4cm}\int
    +\quad \subset\hspace*{-.4cm}\int \right)
    f_1\left(\frac{w}{2}\right) \frac{e^{z^2/(2w)}}{(w-z)^2} \dd w
    +O\left(\ve^{-2}\int_{\ve\le|\theta|\le\pi}
    \left|f_1\left(\frac{|z|e^{i\theta}}{2}\right)\right|
    e^{|z|(\cos\theta)/2}\dd\theta\right),
\end{align*}
where the integration contour \raisebox{.06cm}{\tiny $\supset$}
\hspace*{-8.6pt}$\int$ is any path connecting the two endpoints
$|z|e^{\pm i\ve}$ and indented to the right, and
\raisebox{.07cm}{\tiny $\subset$} \hspace{-8.6pt}$\int$ denotes the
corresponding symmetric contour with respect to $w=z$ (and indented
to the left). Since $\tilde{f}_1\in\JS$, condition \textbf{(O)} for
$\tilde{h}_c(z)$ is readily checked.

For condition \textbf{(I)}, it suffices to prove (\ref{hcz}). For
that purpose, we use the representation
\begin{align*}
    \tilde{h}_c(z)
    &= \frac{e^{-z}}{2\pi i}\oint_{|w|=r<1}
    \frac{e^{z(w+1/w)/2}}{(w-1)^2}\left(
    \tilde{f}_1\left(\frac{z}{2w}\right)
    +\tilde{f}_1\left(\frac{wz}{2}\right) \right) \dd w \\
    &= \tilde{f}_1\left(\frac{z}{2}\right)
    \frac{e^{-z}}{2\pi i}\oint_{|w|=r<1}
    \frac{e^{z(w+1/w)/2}}{(w-1)^2}\dd w
    + \frac{e^{-z}}{2\pi i}\oint_{|w|=r<1}
    e^{z(w+1/w)/2} R_z(w) \dd w\\
    &= \tilde{f}_1(z/2)\tilde{h}_1(z) +
    \frac{e^{-z}}{2\pi i}\oint_{|w|=1}
    e^{z(w+1/w)/2} R_z(w) \dd w,
\end{align*}
where
\begin{align*}
    R_z(w)
    := \frac1{(w-z)^2}& \Biggl\{\left(\tilde{f}_1
    \left(\frac{z}{2w}\right)
    - \tilde{f}_1\left(\frac{z}{2}\right) -
    \tilde{f}_1'\left(\frac{z}{2}\right)\frac{z(w-1)}2\right)\\
    &+\left(\tilde{f}_1\left(\frac{wz}{2}\right)
    -\tilde{f}_1\left(\frac{z}{2}\right)+
    \tilde{f}_1'\left(\frac{z}{2}\right)\frac{z(w-1)}2\right)\Biggr\}
\end{align*}
is analytic at $w=z$. The error term of
$\tilde{h}_c(z)-\tilde{h}_1(z) \tilde{f}_1(z/2)$ can be estimated by
a similar argument as that used for checking condition \textbf{(O)}.
This completes the proof of the Lemma.
\end{proof}

The remaining analysis is now routine. Let $\tilde{V}(z) :=
\tilde{f}_2(z) -\tilde{f}_1(z)^2-z\tilde{f}_1'(z)^2$. Then
\[
    \tilde{V}(z) + \tilde{V}'(z)
    = 2\tilde{V}(z/2) + \tilde{g}_2(z),
\]
where, by Lemma~\ref{lmm-Vz-dfe},
\begin{align*}
    \tilde{g}_2(z)
    &= z-\tilde{h}_1(z)^2 +4\left(\tilde{h}_c(z)-
    \tilde{h}_1(z)\tilde{f}_1(z/2)\right)-z\tilde{h}_1'(z)^2
    -2z\tilde{h}_1'(z)\tilde{f}_1'(z/2)+z\tilde{f}_1''(z)^2\\
    &= \left(1-\frac2\pi\right)z + O(|z|^{1/2}),
\end{align*}
for $|\arg(z)|\le\pi/2-\ve$. From this and the analytic properties
of the functions involved, we deduce (\ref{Var-DPL}).

\paragraph{Remark.} The same approach can be extended to more
general differential path-length of the form $\sum _{\text{all
nodes}}|\mathcal{T}_{\text{left}} -\mathcal{T}_{\text{right}}|^m$
with $m\ge2$. Interestingly, when $m=2$, the \emph{mean is identical
to the total internal path-length} in view of (\ref{diff-2}) and the
variance is asymptotic to $4n^2$. For $m>2$, the mean and the
variance are asymptotic to
\[
    \frac{2^{m/2}\Gamma((m+1)/2)}{\sqrt{\pi}(1-2^{1-m})}
    \,n^{m/2}, \quad \frac{2^m(\Gamma(m+1/2)-
    \pi^{-1/2}\Gamma((m+1)/2)^2)}{\sqrt{\pi}( 1-2^{1-m} )}
    n^m,
\]
respectively.

\subsection{A weighted path-length (WPL)}

Weighted path-lengths of the form $W_n := \sum_{1\le j\le n} w_j
\ell_j$ appear often in applications, where $\ell_j$ denotes the
distance of the $j$-th node (arranged in an appropriate manner, say
first level-wise and then left-to-right or in their incoming order)
to the root and $w_j$ the weight attached to the $j$-th node. The
calculation of $W_n$ in the case of random DSTs can be carried out
recursively by
\[
    W_{n+1}
    \stackrel{d}{=} W_{B_n} + W_{n-B_n}^* + \sum_{2\le j\le n+1}w_j,
\]
assuming that the root is labelled $1$. We consider in this section
the case when $w_j = (\log j)^m$, $m\ge1$. From a technical point of
view, it suffices to consider the random variables
\begin{align*}
    X_{n+1}
    \stackrel{d}{=} X_{B_n} + X_{n-B_n}^* + (n+1)(\log (n+1))^m
    \qquad(n\ge0),
\end{align*}
with $X_0=0$, since the partial sum $\sum_{2\le j\le n}(\log j)^m$
is nothing but
\[
    \sum_{2\le j\le n}(\log j)^m
    = [z^n] \frac{L_{0,m}(z)}{1-z},
\]
where
\[
    L_{a,m}(z)
    := \sum_{k\ge1} n^{-a}(\log k)^m z^m\qquad(a\not=1,2,\dots),
\]
on whose analytic properties our analytic approach heavily relies.

The random variables $X_n$ represent the sole example on DSTs we
discuss in this paper with non-integral values; they also exhibit an
interesting phenomenon in that the mean is of order $n(\log
n)^{m+1}$ but the variance is asymptotic to $n$ times a periodic
function, in contrast to the orders of DPL.

\begin{thm} The mean and the variance of the weighted path-length
$X_n$ are asymptotic to
\begin{align*}
    \mathbb{E}(X_n)
    &= \frac{n(\log n)^{m+1}}{(m+1)\log 2} +
    n\sum_{1\le j\le m} c_{m,j} (\log n)^j + nP_{w,\mu}(\log_2n)
    +O\left((\log n)^{m+1}\right),\\
    \mathbb{V}(X_n)
    &= nP_{w,\sigma}(\log_2n) +O\left((\log n)^{2m+2}\right),
\end{align*}
respectively, where the $c_{m,j}$'s are constants depending on $m$,
and $P_{w,\mu}$ and $P_{w,\sigma}$ are $1$-periodic, smooth
functions.
\end{thm}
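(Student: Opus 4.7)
The approach follows the Laplace-Mellin-de-Poissonization template developed in Section~\ref{dst}. The first step is to derive differential-functional equations for the Poisson generating functions $\tilde f_1(z)$, $\tilde f_2(z)$ of the first two moments. From the recurrence, $\tilde f_1$ satisfies
\[
    \tilde f_1(z) + \tilde f_1'(z) = 2\tilde f_1(z/2) + \tilde h_1(z), \qquad \tilde f_1(0) = 0,
\]
where $\tilde h_1(z) = e^{-z}\sum_{n\ge 0}(n+1)(\log(n+1))^m z^n/n!$ is the Poisson generating function of the toll sequence; the second moment satisfies a structurally similar equation with inhomogeneity $\tilde h_2$ gathering the toll-squared contribution, the convolution term $2\tilde f_1(z/2)^2$, and a cross term involving the toll and $\tilde f_1(z/2)$.

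The central analytic ingredient is control of $\tilde h_1$ and its Laplace transform. A direct calculation gives
\[
    \mathscr{L}[\tilde h_1;s] = \sum_{k\ge 1}k(\log k)^m (s+1)^{-k}
    = (-1)^m \left.\partial_a^m \text{Li}_a\!\left(\tfrac{1}{s+1}\right)\right|_{a=-1},
\]
and the Jonqui\`ere-Erd\'elyi expansion $\text{Li}_a(e^{-t}) = \Gamma(1-a)t^{a-1} + \sum_{k\ge 0}\zeta(a-k)(-t)^k/k!$ (with $t = \log(s+1)$), differentiated $m$ times in $a$ at $a=-1$, yields as $s\to 0^+$ the singular expansion
\[
    \mathscr{L}[\tilde h_1;s]
    = \frac{(-\log s)^m}{s^2} + \sum_{0\le j<m}a_{m,j}\frac{(-\log s)^j}{s^2} + \text{analytic in }s.
\]
Combined with the uniform estimate $\tilde h_1(z) = O(|z|(\log_+|z|)^m)$ in the main sector $|\arg z|\le\ve$ (from the Poisson heuristic applied to the smooth sequence $(\log n)^m$) and sub-exponential bounds in the lateral sector $\ve\le|\arg z|\le\pi$ (via Mellin-Barnes contours for the polylogarithm), one concludes $\tilde h_1\in\JS$ of growth type $(1,m)$, so that Proposition~\ref{prop-dst-tr} gives $\tilde f_1\in\JS$. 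The standard Laplace normalization yields $\mathscr{M}[\bar{\mathscr{L}}[\tilde f_1;\cdot];\omega] = G_1(\omega)/(1-2^{2-\omega})$ where $G_1$ is the Mellin transform of $\mathscr{L}[\tilde h_1;s]/Q(-2s)$; the singular expansion implies $G_1$ has a pole of order $m+1$ at $\omega=2$, producing a pole of order $m+2$ in the combined function. Residue calculation plus Proposition~\ref{prop-inv-laplace} then gives
\[
    \tilde f_1(z) = \frac{z(\log z)^{m+1}}{(m+1)\log 2} + z\sum_{1\le j\le m} c_{m,j}(\log z)^j + zP_{w,\mu}(\log_2 z) + O\left(|z|^{-\ve}\right),
\]
uniformly in $|\arg z|\le \pi/2-\ve$, with $P_{w,\mu}$ an absolutely convergent Fourier series from the simple poles at $\omega = 2+\chi_k$, $k\ne 0$. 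De-Poissonization via Proposition~\ref{prop-PC-asymp} delivers the mean expansion, the error $O((\log n)^{m+1})$ absorbing the Poisson-Charlier correction terms $\tilde f_1^{(j)}(n)\tau_j(n)/j!$ for $j\ge 2$.

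For the variance, set $\tilde V(z) := \tilde f_2(z) - \tilde f_1(z)^2 - z\tilde f_1'(z)^2$. By Lemma~\ref{lmm-Vz-dfe},
\[
    \tilde V(z) + \tilde V'(z) = 2\tilde V(z/2) + \tilde g_2(z), \qquad \tilde V(0) = 0,
\]
with $\tilde g_2$ explicit in $\tilde h_1$, $\tilde h_2$, $\tilde f_1$. The pivotal cancellation is that individual terms appearing in $\tilde g_2$---namely $\tilde h_1^2$, $z\tilde h_1'^2$, $\tilde h_1\tilde f_1(z/2)$, $z\tilde h_1'\tilde f_1'(z/2)$, $\tilde f_1(z/2)^2$ and the analogous contributions from $\tilde h_2$---each grow at rate $|z|(\log z)^{2m+2}$ in the main cone, yet combine to $\tilde g_2(z) = O((\log z)^{2m+2})$. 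From this, $\mathscr{L}[\tilde g_2;s] = O(|\log s|^{2m+2}/|s|)$ as $s\to 0$, so the Mellin transform $G_2(\omega)$ of $\mathscr{L}[\tilde g_2;s]/Q(-2s)$ is analytic in a half-plane containing $\omega = 2$, and the dominant pole of $G_2(\omega)/(1-2^{2-\omega})$ is the simple pole at $\omega = 2$, accompanied by the periodic family at $\omega = 2+\chi_k$. Inversion gives $\tilde V(z) = zP_{w,\sigma}(\log_2 z) + O((\log z)^{2m+2})$ and de-Poissonization completes the variance asymptotic.

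The principal obstacle is twofold: (i) verifying JS-admissibility of $\tilde h_1$ in the lateral sector $\ve\le|\arg z|\le \pi$, where the polylogarithm derivatives in the parameter must be controlled via Mellin-Barnes contour shifts or Lindel\"of-type growth bounds rather than by the elementary arguments available in Section~\ref{dst}; and (ii) the delicate bookkeeping behind the bound $\tilde g_2(z) = O((\log z)^{2m+2})$, which requires retaining not merely the leading $z(\log z)^{m+1}$-term but also all the sub-leading polynomial-in-$\log z$ terms in the expansion of $\tilde f_1$, so that $z\tilde f_1'(z/2)^2$, $\tilde f_1(z/2)^2$, and the toll-induced cross terms in $\tilde h_2$ cancel exactly to the requested order.
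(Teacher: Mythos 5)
Your proposal is correct and follows essentially the same Laplace--Mellin--de-Poissonization template the paper uses: the Jonqui\`ere--Erd\'elyi expansion of $\partial_a^m\mathrm{Li}_a$ you invoke is precisely the content of the paper's Lemma on $L_{a,m}(e^{-s})$ (quoted from Flajolet's singularity analysis of Bernoulli sums), and both routes then proceed via the normalized Laplace transform, residue calculus, inverse Laplace, and Poisson--Charlier de-Poissonization with the $\tilde V$-correction. The paper itself supplies far less detail---it merely states that lemma and remarks that ``details are left for the interested readers''---so your write-up is a faithful and considerably more explicit realization of the intended argument; the two obstacles you flag (lateral-sector $\JS$-admissibility of $\tilde h_1$ and the bookkeeping behind the $\tilde g_2$ cancellations) are exactly where the nontrivial work lies.
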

That the variance is linear is well-predicted by the deep theorem of
Schachinger derived in \cite{schachinger95b} since the second
difference of the sequence $n(\log n)^m$ is $o(n^{-1/2-\ve})$. Our
approach has the advantage of providing more precise approximations.

The new ingredient we need is incorporated in the following lemma.
\begin{lmm}[\cite{flajolet99a}] The function $L_{a,m}(z)$ can
analytically be continued into the cut-plane
$\mathbb{C}\setminus[1,\infty)$ with a sole singularity at $z=1$
near which it admits the asymptotic approximation
\[
    L_{a,m}(e^{-s})
    = \Gamma(1-a) s^{a-1}(-\log s)^{m} +O(1),
\]
the $O$-term holding uniformly for $|\arg(s)|\le\pi-\ve$.
\end{lmm}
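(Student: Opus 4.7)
The plan is to reduce the claim to the classical singular analysis of the polylogarithm $\text{Li}_a(z):=\sum_{k\ge1}k^{-a}z^k$ by exploiting the parameter-differentiation identity
\begin{equation*}
L_{a,m}(z) \;=\; (-1)^m \,\frac{\partial^m}{\partial a^m}\,\text{Li}_a(z),
\end{equation*}
which holds inside $|z|<1$ by termwise differentiation of the defining series, since $\partial_a k^{-a}=-k^{-a}\log k$. Thus the two claims of the lemma (analytic continuation and singular expansion) follow at once if I can transfer the corresponding statements for $\text{Li}_a$ through the operator $(-1)^m\partial_a^m$.

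First I would recall (or re-derive via Mellin--Barnes) the representation
\begin{equation*}
\text{Li}_a(e^{-s})\;=\;\frac{1}{2\pi i}\int_{(c)}\Gamma(w)\,\zeta(a+w)\,s^{-w}\,dw
\qquad (c>\max(0,1-a)),
\end{equation*}
initially valid for $\Re(s)>0$, which both provides the analytic continuation of $\text{Li}_a(z)$ to $\mathbb{C}\setminus[1,\infty)$ via contour deformation and, after shifting the contour to the left of $w=1-a$ and of the non-positive integers (simple poles of $\Gamma$), yields the classical singular expansion
\begin{equation*}
\text{Li}_a(e^{-s})\;=\;\Gamma(1-a)\,s^{a-1} \;+\; \sum_{k\ge 0}\frac{\zeta(a-k)}{k!}(-s)^k,
\end{equation*}
holding uniformly for $|\arg s|\le\pi-\ve$ as $s\to 0$, provided $a\notin\{1,2,\dots\}$. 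The leading algebraic term arises from the simple pole of $\zeta(a+w)$ at $w=1-a$, while the polynomial tail collects the residues at $w=0,-1,-2,\dots$.

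Second I would apply $(-1)^m\partial_a^m$ to this framework. Analyticity in the cut plane transfers directly because $\text{Li}_a(z)$ is jointly holomorphic in $(a,z)$ on the relevant domain (once $a$ is kept away from $\{1,2,\ldots\}$), and the Mellin--Barnes integral converges uniformly in compact $a$-sets, so $\partial_a^m$ may be passed through the integral. For the singular expansion, Leibniz's rule gives
\begin{equation*}
(-1)^m\partial_a^m\bigl[\Gamma(1-a)\,s^{a-1}\bigr]
\;=\;\Gamma(1-a)\,s^{a-1}(-\log s)^m + \sum_{0\le j<m}\phi_j(a)\,s^{a-1}(\log s)^{j},
\end{equation*}
isolating the advertised leading term, while the derivative of the polynomial tail is $\sum_{k\ge0}\zeta^{(m)}(a-k)(-s)^k/k!=O(1)$ as $s\to 0$.

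The main obstacle --- and the reason the remainder is phrased as $O(1)$ --- is the bookkeeping of the subdominant log-power terms $s^{a-1}(\log s)^j$ with $j<m$: these must be absorbed into the stated $O(1)$ bound, which is compatible with the paper's subsequent use (only the singular exponent and the top-power logarithm matter for the Laplace--Mellin inversion that follows). Uniformity in $|\arg s|\le\pi-\ve$ is then inherited from the uniform convergence of the shifted Mellin--Barnes integral in any closed sector avoiding the cut, together with standard bounds $|\Gamma(c+it)\zeta^{(j)}(a+c+it)|=O(e^{-(\pi-\ve)|t|})$ that make the tail estimates insensitive to $\partial_a^m$.
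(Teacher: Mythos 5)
The paper does not actually prove this lemma; it cites \cite{flajolet99a}, and your strategy --- writing $L_{a,m}(z)=(-1)^m\partial_a^m\mathrm{Li}_a(z)$ and extracting the singular behavior of $\mathrm{Li}_a(e^{-s})$ from a Mellin--Barnes representation by shifting the contour and collecting residues --- is precisely the one used there. Two points, however, need repair. First, the vertical-line integral $\frac{1}{2\pi i}\int_{(c)}\Gamma(w)\zeta(a+w)s^{-w}\dd{w}$ converges only for $|\arg(s)|<\pi/2$: the bound you invoke, $|\Gamma(c+it)\zeta^{(j)}(a+c+it)|=O(e^{-(\pi-\ve)|t|})$, is false --- the Gamma factor only supplies $e^{-\pi|t|/2}|t|^{c-1/2}$ as in (\ref{Gamma-large-t}), while $|s^{-c-it}|$ grows like $e^{|t|\,|\arg(s)|}$. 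Hence neither the analytic continuation to all of $\mathbb{C}\setminus[1,\infty)$ nor the uniformity of the expansion in the full sector $|\arg(s)|\le\pi-\ve$ follows from the representation as you state it; one must either rotate the integration ray together with $\arg(s)$ or, as in \cite{flajolet99a}, obtain the continuation from Lindel\"of's integral $-\frac{1}{2\pi i}\int_{(1/2)}w^{-a}(-z)^{w}\,\pi/\sin(\pi w)\dd{w}$ and only then perform the local analysis near $z=1$.

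Second, the subdominant terms $s^{a-1}(\log s)^{j}$ with $0\le j<m$ produced by your Leibniz computation cannot be ``absorbed into the stated $O(1)$'': for $\Re(a)<1$ (the case relevant to the WPL application) they tend to infinity as $s\to0$. Your own argument in fact shows, already for $a=0$ and $m=1$, that $L_{0,1}(e^{-s})=s^{-1}\log(1/s)-\gamma s^{-1}+O(1)$, so the honest remainder is $O\bigl(|s|^{\Re(a)-1}(\log(1/|s|))^{m-1}\bigr)+O(1)$ rather than $O(1)$. This is not a defect of your method --- the Leibniz expansion is the correct bookkeeping, and the paper's subsequent theorem does retain the corresponding $n(\log n)^{j}$, $1\le j\le m$, contributions --- but you should state the corrected error term instead of asserting that these intermediate log-powers are bounded.
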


Indeed, the tools developed in \cite{flajolet99a} can also be easily
extended to similar ``toll-functions'' such as $nH_n^m$. Details are
left for the interested readers.

\section{Conclusions and extensions}

We showed in this paper, through many shape parameters on random
DSTs that the crucial use of the normalization $\tilde{V}(z) :=
\tilde{f}_2(z) - \tilde{f}_1(z)^2-z\tilde{f}'(z)^2$ at the level of
Poisson generating function is extremely helpful in simplifying the
asymptotic analysis of the variance as well as the resulting
expressions. The same idea can be applied to a large number of
concrete problems with a binomial splitting procedure. These and
some related topics and extensions will be pursued elsewhere. We
briefly mention in this final section some extensions and related
properties.

\paragraph{Central limit theorems.}

All shape parameters we considered in this paper are asymptotically
normally distributed in the sense of convergence in distribution. We
describe the results in this section and merely indicate the methods
of proofs. The only case that requires a separate study is NPL of
random $b$-DSTs with $b\ge2$ (a bivariate consideration of the limit
laws is needed), details being given in a future paper.

\begin{thm} The internal path-length, the peripheral path-length, the
number of leaves, the differential path-length, the weighted
path-length of random DSTs, and the key-wise path-length of random
$b$-DSTs with $b\ge2$ are all asymptotically normally distributed
\[
    \frac{X_n - \mathbb{E}(X_n)}{\sqrt{\mathbb{V}(X_n)}}
    \stackrel{d}{\longrightarrow} \mathscr{N}(0,1),
\]
where $X_n$ denotes any of these shape parameters,
$\stackrel{d}{\longrightarrow}$ stands for convergence in
distribution, and $\mathscr{N}(0,1)$ is a standard normal
distribution with zero mean and unit variance.
\end{thm}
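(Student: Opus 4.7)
The plan is to apply the analytic machinery developed for the mean and variance to the full characteristic function, essentially promoting the heuristic calculation displayed after equation (\ref{Vz}) to a rigorous asymptotic quasi-power approximation. For each shape parameter $X_n$, consider the Poissonized characteristic function
$$\tilde{\Phi}(z,\theta)\;:=\;e^{-z}\sum_{n\ge0}\mathbb{E}\bigl(e^{i\theta X_n}\bigr)\frac{z^n}{n!},$$
which, in each of our concrete examples, satisfies a nonlinear differential-functional equation of the same general shape as (\ref{equ-tildeP}) or (\ref{PGF-bdst-TKPL}) (with $y$ replaced by $i\theta$). I would first derive, for $\theta$ in a suitably shrinking complex neighborhood of $0$, an asymptotic expansion of the form
$$\tilde{\Phi}(z,\theta)\;=\;\exp\!\Bigl(\tilde{f}_1(z)\,i\theta\;-\;\tfrac{1}{2}\tilde{D}(z)\,\theta^2\;+\;R(z,\theta)\Bigr),$$
with $\tilde{D}=\tilde{f}_2-\tilde{f}_1^2$ and a remainder $R$ that is $o(\tilde{D}(z)\theta^2)$ for $\theta=O(\tilde{D}(z)^{-1/2+\ve})$. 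The point is that, after taking logarithms, the nonlinear equation for $\tilde{\Phi}$ becomes a differential-functional equation of essentially the same ``DST-type'' as those in Proposition~\ref{prop-dst-tr} and Proposition~\ref{prop-bdst-tr}; the closure properties of $\JS$ in Lemma~\ref{lm-closure}, together with the transfer principles we have already established, give a systematic way to control $R(z,\theta)$ uniformly in $z$ and $\theta$.

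Next, I would de-Poissonize by adapting Proposition~\ref{prop-PC-asymp}. The key refinement required is an analogue of the Poisson--Charlier expansion uniform in the auxiliary parameter $\theta$: for $\tilde{\Phi}(\cdot,\theta)\in\JS$ uniformly in $\theta$ in a shrinking disk, one obtains
$$\Phi_n(\theta)\;=\;\tilde{\Phi}(n,\theta)\;-\;\frac{n}{2}\,\partial_z^{\,2}\tilde{\Phi}(n,\theta)\;+\;\cdots\;,$$
and the displayed saddle-point calculation in the introduction (near equation (\ref{Vz})) shows that the correction $-\tfrac{n}{2}\partial_z^{2}\tilde{\Phi}$ contributes precisely the term $-\tfrac12 n\tilde{f}_1'(n)^2\theta^2$ inside the exponent. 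Combining this with the asymptotics of $\tilde{f}_1(z)$ and $\tilde{V}(z)$ already worked out case by case, one gets
$$\mathbb{E}\bigl(e^{i\theta(X_n-\mu_n)}\bigr)\;=\;\exp\!\Bigl(-\tfrac12\sigma_n^{2}\theta^2 + o(\sigma_n^{2}\theta^2)\Bigr),$$
where $\mu_n=\mathbb{E}(X_n)$ and $\sigma_n^2=\mathbb{V}(X_n)$ are the asymptotic approximations already derived in this paper. Substituting $\theta\mapsto\theta/\sigma_n$ and invoking L\'evy's continuity theorem then gives the desired central limit theorem for each of the shape parameters listed.

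The main technical obstacle is uniformity: one has to upgrade JS-admissibility to a parametric version ``$\tilde{\Phi}(\cdot,\theta)\in\JS$ uniformly for $|\theta|\le\delta_n$'' with $\delta_n$ chosen compatibly with the variance order ($n^{-1/2}$, $(n\log n)^{-1/2}$, or $(n(\log n)^2)^{-1/2}$ depending on the parameter). This requires tracking how the inner-sector bounds \textbf{(I)} and the outer-sector bounds \textbf{(O)} depend on $\theta$ through the nonlinear right-hand side of the differential-functional equation; in particular, the quadratic occurrence of $\tilde{\Phi}$ in equations like (\ref{PGF-bdst-TKPL}) means the proofs of the transfer propositions must be run with a tolerance parameter controlling $\log\tilde{\Phi}$, which is delicate but mechanical once the variance order is known. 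The cases with variance of order $n(\log n)^2$ (node-wise path-length, Section~\ref{tnpl}) and those requiring a joint normal limit for $(N_n,X_n)$ will demand a bivariate version of the quasi-power scheme; this is the only spot where additional care is needed, and it is natural to defer its full treatment to a companion paper as the authors indicate.
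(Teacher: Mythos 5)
Your proposal takes a genuinely different route from the paper's. The paper establishes these central limit theorems by the method of moments (explicitly pointing to the treatment in Hubalek \emph{et al.} for bucket DSTs, of which the number-of-leaves case is a special case), and also notes that the contraction method and Schachinger's martingale-difference technique cover most cases. You instead propose the complex-analytic, characteristic-function route in the spirit of Jacquet--Szpankowski, deriving a quasi-power expansion $\tilde{\Phi}(z,\theta)=\exp(\tilde{f}_1(z)i\theta - \tfrac12\tilde{D}(z)\theta^2 + R)$ and de-Poissonizing with a $\theta$-uniform Poisson--Charlier correction. The paper explicitly acknowledges this alternative and characterizes it as ``messy'' while conceding that it yields stronger conclusions (rates, local limit theorems); your plan is thus a recognized variant, not the one the authors rely on. The method of moments buys a short, uniform proof across all linear shape parameters by reusing the very recurrences for Poissonized moments the paper already manipulates; your route buys sharper error control at the cost of heavy uniform estimates.

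One step in your outline is more than ``delicate but mechanical,'' and I would flag it. You assert that after taking logarithms the nonlinear differential-functional equation for $\tilde{\Phi}$ ``becomes a differential-functional equation of essentially the same DST-type.'' This is not literally true: writing $L=\log\tilde{\Phi}$, the left side $\tilde{\Phi}+\partial_z\tilde{\Phi}=e^{L}(1+L')$ produces $L+\log(1+L')$, not the linear combination $L+L'$ to which Propositions~\ref{prop-dst-tr} and \ref{prop-bdst-tr} apply. Linearizing $\log(1+L')$ is precisely where the analysis of Jacquet--Szpankowski becomes technical: one must show $L'$ is small in the relevant $\theta$-range uniformly over the growing polydisk used for de-Poissonization, and track the resulting quadratic remainder through each iteration of the functional equation. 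The JS-admissibility closure lemmas in this paper (Lemma~\ref{lm-closure}) were designed for moment generating functions, not for $\log$ of a solution of a nonlinear equation; a parametric version with an explicit tolerance in $\theta$ would need to be built essentially from scratch. A secondary point: your statement that the de-Poissonization correction $-\tfrac{n}{2}\partial_z^2\tilde{\Phi}(n,\theta)$ ``contributes precisely the term $-\tfrac12 n\tilde{f}_1'(n)^2\theta^2$ inside the exponent'' has a sign slip; since $\partial_z^2\tilde{\Phi}\approx -\tilde{f}_1'(n)^2\theta^2\,\tilde{\Phi}$, the correction contributes $+\tfrac12 n\tilde{f}_1'(n)^2\theta^2$ to the exponent, which is what reduces the Poissonized variance $\tilde{D}(n)$ to $\tilde{D}(n)-n\tilde{f}_1'(n)^2=\tilde{V}(n)$. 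Neither issue is fatal to the approach, but the first one is the genuine technical content that would need to be supplied before the argument is complete, and it is substantial enough that the paper deliberately avoids it.
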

See Figure~\ref{fig-DPL-histo} for a plot of the histograms of DPL.

The method of moments applies to all these cases and establishes the
central limit theorems; similar details are given as in
\cite{hubalek02a} (the asymptotic normality of the number of leaves
being already proved there as a special case).

In a parallel way, contraction method also works well for all these
shape parameters; see \cite{neininger01a,neininger04a,neininger06a}.

On the other hand, Schachinger's asymptotic normality results cover
the IPL, PPL, number of leaves and WPL, but not PPL and KPL on
$b$-DSTs, although his approach may be modified for that purpose.

Finally, the complex-analytic approach used in \cite{jacquet95a} for
internal path-length may be extended to prove some of these cases,
but the proofs are messy, although the results established are often
stronger (for example, with convergence rate).

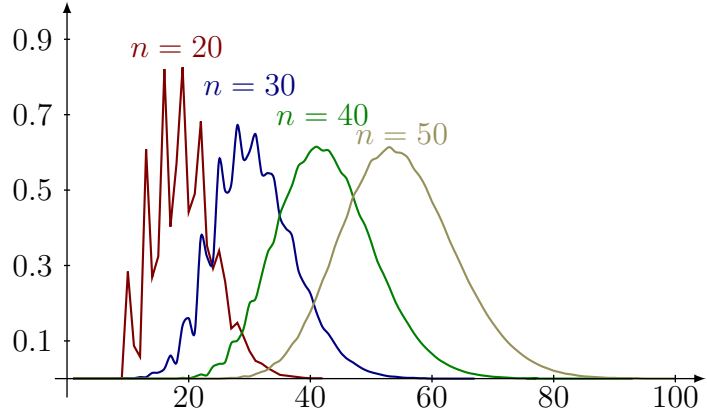
\begin{figure}[!h]
\begin{center}
\begin{tikzpicture}[xscale=.08,yscale=5]
\draw[color=red!50!black, line width=.8pt] plot coordinates{(1,
0.000) (2,  0.000) (3,  0.000) (4,  0.000) (5, 0.000) (6,  0.000)
(7,  0.000) (8,  0.000) (9,  0.000) (10, 0.284) (11, 0.087) (12,
0.058) (13, 0.608) (14, 0.269) (15, 0.324) (16, 0.821) (17, 0.403)
(18, 0.571) (19, 0.826) (20, 0.442) (21, 0.490) (22, 0.682) (23,
0.353) (24, 0.292) (25, 0.339) (26, 0.257) (27, 0.133) (28, 0.148)
(29, 0.112) (30, 0.073) (31, 0.047) (32, 0.040) (33, 0.028) (34,
0.019) (35, 0.010) (36, 0.008) (37, 0.006) (38, 0.004) (39, 0.002)
(40, 0.001) (41, 0.001) (42, 0.001)};%
\draw[color=blue!50!black, line width=.8pt] plot[smooth]
coordinates{(1, 0.000)(2, 0.000)(3, 0.000)(4, 0.000)(5, 0.000)(6,
0.000)(7, 0.000)(8, 0.000)(9,0.001)(10,0.001)(11,0.001)(12,0.004)
(13,0.004)(14,0.015)(15,0.016)(16,0.026)(17,0.061)(18,0.042)
(19,0.141)(20,0.160)(21,0.123)(22,0.375)(23,0.324)(24,0.314)
(25,0.580)(26,0.498)(27,0.515)(28,0.672)(29,0.582)(30,0.602)
(31,0.648)(32,0.544)(33,0.545)(34,0.532)(35,0.429)(36,0.398)
(37,0.378)(38,0.289)(39,0.252)(40,0.226)(41,0.176)(42,0.137)
(43,0.118)(44,0.093)(45,0.069)(46,0.053)(47,0.043)(48,0.032)
(49,0.023)(50,0.017)(51,0.013)(52,0.009)(53,0.006)(54,0.005)
(55,0.003)(56,0.002)(57,0.002)(58,0.001)(59,0.001)(60,0.000)
(67,0.000)}; \draw[color=green!50!black, line width=.8pt]
plot[smooth] coordinates{(1, 0.000)(2, 0.000)(3, 0.000)(4, 0.000)(5,
0.000)(6, 0.000)(7, 0.000)(8,
0.000)(9,0.000)(10,0.000)(11,0.000)(12,0.000)
(13,0.000)(14,0.000)(15,0.000)(16,0.000)(17,0.000)(18,0.000)
(19,0.002)(20,0.001)(21,0.005)(22,0.011)(23,0.009)(24,0.032)
(25,0.039)(26,0.044)(27,0.094)(28,0.101)(29,0.131)(30,0.199)
(31,0.212)(32,0.274)(33,0.335)(34,0.364)(35,0.435)(36,0.476)
(37,0.513)(38,0.563)(39,0.577)(40,0.603)(41,0.615)(42,0.605)
(43,0.605)(44,0.580)(45,0.548)(46,0.529)(47,0.484)(48,0.437)
(49,0.406)(50,0.362)(51,0.312)(52,0.278)(53,0.242)(54,0.203)
(55,0.173)(56,0.147)(57,0.121)(58,0.099)(59,0.081)(60,0.066)
(61,0.052)(62,0.042)(63,0.033)(64,0.026)(65,0.020)(66,0.015)
(67,0.012)(68,0.009)(69,0.007)(70,0.005)(71,0.004)(72,0.003)
(73,0.002)(74,0.001)(75,0.001)(76,0.001)(77,0.001)(78,0.000)
(93,0.000)}; \draw[color=yellow!50!black, line width=.8pt]
plot[smooth]
coordinates{(1,0.000)(2,0.000)(3,0.000)(4,0.000)(5,0.000)
(6,0.000)(7,0.000)(8,0.000)(9,0.000)(10,0.000)(11,0.000)
(12,0.000)(13,0.000)(14,0.000)(15,0.000)(16,0.000)(17,0.000)
(18,0.000)(19,0.000)(20,0.000)(21,0.000)(22,0.000)(23,0.000)
(24,0.000)(25,0.000)(26,0.001)(27,0.002)(28,0.002)(29,0.006)
(30,0.006)(31,0.012)(32,0.018)(33,0.024)(34,0.039)(35,0.048)
(36,0.067)(37,0.091)(38,0.111)(39,0.146)(40,0.177)(41,0.216)
(42,0.260)(43,0.297)(44,0.349)(45,0.391)(46,0.429)(47,0.481)
(48,0.510)(49,0.542)(50,0.577)(51,0.586)(52,0.602)(53,0.614)
(54,0.601)(55,0.598)(56,0.586)(57,0.557)(58,0.536)(59,0.507)
(60,0.469)(61,0.438)(62,0.402)(63,0.362)(64,0.328)(65,0.293)
(66,0.258)(67,0.228)(68,0.199)(69,0.171)(70,0.147)(71,0.125)
(72,0.105)(73,0.088)(74,0.074)(75,0.061)(76,0.050)(77,0.041)
(78,0.033)(79,0.027)(80,0.021)(81,0.017)(82,0.013)(83,0.011)
(84,0.008)(85,0.006)(86,0.005)(87,0.004)(88,0.003)(89,0.002)
(90,0.002)(91,0.001)(92,0.001)(93,0.001)(94,0.001)(95,0.000)
(100,0.000)}; \foreach \x/\xtext in {20,40,...,100}
   \draw[shift={(\x,0)}, line width=.5pt]
   (0,-.01) -- (0,.01) node[below] {$\xtext$};%
\foreach \y/\ytext in {0.1,0.3,...,0.9}
   \draw[shift={(0,\y)}, line width=.5pt]
   (.5,0) -- (-.5,0) node[left]{$\ytext$};%
\draw[-latex, line width=.5pt] (-2,0) -- (105,0) node[right] {};%
\draw[-latex, line width=.5pt] (0,-.05) -- (0,1) node[above] {};%
\draw (18,.88)node[color=red!50!black]  {$n=20$};%
\draw (30,.78)node[color=blue!50!black]  {$n=30$};%
\draw (42,.70)node[color=green!50!black]  {$n=40$};%
\draw (55,.65)node[color=yellow!50!black]  {$n=50$};%
\end{tikzpicture}
\end{center}
\caption{\emph{The histograms of DPL for $n=20, 30,
40$ and $50$, normalized by their standard deviations.}}
\label{fig-DPL-histo}
\end{figure}

\paragraph{The depth.} The asymptotic analysis we used in this paper
can also be extended to the depth (the distance between a randomly
chosen internal node and the root) although it is of logarithmic
order. Let $X_n$ denote the depth of a random DST of $n$ nodes. The
starting point is to consider the expected profile polynomial
\[
    P_n(y)
    := \sum_{0\le k<n} n\mathbb{P}(X_n=k) y^k,
\]
where $n\mathbb{P}(X_n=k)$ is nothing but the expected number of
internal nodes at distance $k$ to the root. Then we have the
recurrence
\[
    P_{n+1}(y)
    = 1+y2^{-n}\sum_{0\le k\le n} \binom{n}{k} \left(P_k(y)
    +P_{n-k}(y)\right) \qquad(n\ge0),
\]
with $P_0(y)=0$. From this relation, we obtain the equation for the
Poisson generating function $\tilde{F}(z,y)$ of $P_n(y)$
\[
   \tilde{F}(z,y) + \frac{\partial}{\partial z} \tilde{F}(z,y)
   = 2y \tilde{F}\left(\frac z2,y\right) + 1,
\]
with $\tilde{F}(0,y)=0$.  It follows, by taking coefficients of
$z^n$ on both sides and by solving the resulting recurrence, that
\[
    P_n(y)
    = \sum_{1\le k\le n} \binom{n}{k}(-1)^{k-1}
    \prod_{0\le j\le k-2} \left(1-\frac{y}{2^j}\right)
    \qquad(n\ge1);
\]
see \cite[p.\ 504]{knuth1998a} for a different proof. Asymptotic
approximation to $P_n(y)$ can then be obtained by Rice's integral
formula
\[
    P_n(y)
    = n-\frac{y-1}{2\pi i} \int_{(3/2)}
    \frac{\Gamma(n+1)\Gamma(-s) Q(y)}
    {\Gamma(n+1-s)(1-2^{1-s}y) Q(2^{1-s}y)} \dd s,
\]
for $|y-1|\le\ve$. More precisely, if $t\in\mathbb{C}$ lies in a
small neighborhood of the origin, then
\begin{align}
    \mathbb{E}(e^{X_nt})
    &= \frac{P_n(e^t)}{n} \nonumber\\
    &= \frac{(e^t-1)Q(e^t)}{Q(1)\log 2}\sum_{k\in\mathbb{Z}}
    \Gamma\left(-1-\frac{t}{\log 2}-\chi_k\right)
    n^{t/\log_2+\chi_k}\left(1+O\left(n^{-1}\right)\right)
    +O(n^{-1}),\label{depth-ae}
\end{align}
uniformly for $|t|\le\ve$. Alternatively, one can also apply the
Laplace-Mellin-de-Poissonization approach and obtain the same type
of result for not only DSTs but also for more general $b$-DSTs. See
\cite{louchard95a,louchard99a} for a more general and detailed
treatment (by a different approach).

The estimate (\ref{depth-ae}) leads to effective asymptotic
estimates for all moments of $X_n - \log_2n$ by standard arguments;
see \cite{hwang98d}. In particular, we obtain
\begin{align*}
    \mathbb{E}(X_n)
    &= \log_2n + \frac{\gamma-1}{\log 2} + \frac12-\sum_{k\ge1}
    \frac1{2^k-1}+\varpi_1(\log_2n) +O\left(n^{-1}\log n\right),\\
    \mathbb{V}(X_n)
    &= \frac1{12}+\frac1{(\log 2)^2} \left(1+\frac{\pi^2}{6}\right)-
    \sum_{k\ge1} \frac{2^k}{(2^k-1)^2}+\varpi_5(\log_2n) +
    O\left(n^{-1}\log^2n\right),
\end{align*}
where the estimate for the mean is exactly (\ref{mean-asymp}) with
$\varpi_1$ given in (\ref{F1u}) and $\varpi_5$ is a smooth periodic
function.

\paragraph{An analytic extension.}
From a purely analytic viewpoint, the underlying
differential-functional equation (\ref{dfe-DST}) for the moments can
be extended to an equation of the form
\[
    \sum_{0\le j\le b} \binom{b}{j} \tilde{f}^{(j)}(z)
    = \alpha\tilde{f}\left(\frac{z}{\beta}\right) +\tilde{g}(z)
    \qquad(\alpha>0;\beta>1),
\]
for which our approach still applies, leading to the functional
equation for the Laplace transform
\[
    (s+1)^b\mathscr{L}[\tilde{f};s]
    = \alpha \beta \mathscr{L}[\tilde{f};\beta s]
    + \mathscr{L}[\tilde{g};s].
\]
The natural normalizing function is then provided by
\[
    Q_\beta(-s) := \prod_{j\ge1} \left(1+\frac{s}{\beta^j}\right)^b,
\]
and the corresponding Laplace-Mellin asymptotic analysis is similar.

In particular, the case when $\alpha=\beta=m$ corresponds to a
straightforward extension of binary DSTs to $m$-ary DSTS (and the
binary unbiased Bernoulli random variable to the uniform
distribution over $\{0,1,\dots,m-1\}$). The stochastic behaviors of
all shape parameters on such trees follow the same patterns as
showed in this paper.

Yet another concrete instance arises in the so-called Eden model
studied by Dean and Majumdar \cite{dean06a}, which corresponds to
$\alpha=m$ and $\beta>1$. The model is constructed in the following
way. We start at time $t=0$ at which we have an empty node. Then at
time $t=T$, where $T\sim \text{Exponential}(1)$, we fill the empty
node and  attach to it $m$ different empty nodes. The process then
continues independently for each empty node by the following
recursive rule. Once an empty node of depth $j$ is attached to a
tree at time $t=t'$, it is then filled at time point $t'+T$, where
$T\sim E(\beta^j)$, and $m$ new empty nodes are attached to it.

The mean and the variance of the number of filled nodes at a large
time of such trees are studied in details in \cite{dean06a}. Since
the model is continuous, there is no need to de-Poissonize to derive
the asymptotics of the coefficient; as a consequence, no correction
term as we used in this paper is required for the asymptotics of the
variance.

\paragraph{Other DST-type recurrences.}
While the technique of Poissonized variance with correction remains
useful for  the natural case when the Bernoulli random variable is
no longer symmetric, the Laplace-Mellin approach does not apply
directly. Other asymptotic ingredients are needed such as a direct
manipulation of the Mellin transforms; see \cite{louchard99a} and the
references therein.

DST-type structures and recurrences also arise in other statistical
physical models such as the diffusion-limited aggregation; see
\cite{aldous88a,bradley85a}.

\section*{Acknowledgement}
We thank the referee for opportune helpful comments and the more
precise title.

\section*{Appendix. An Elementary Approach to the Asymptotic
Linearity of the Variance.}

We describe briefly here a direct elementary approach
to the variance of random variables satisfying the recurrence
\[
    X_{n+1}
    \stackrel{d}{=} X_{B_n} + X_{n-B_n}^* + T_n,
\]
where
\[
    \pi_{n,k}
    := \mathbb{P}(B_n=k)
    = \binom{n}{k} 2^{-n}\qquad(0\le k\le n).
\]
The starting point is to consider the recurrence satisfied by the
variance $v_n := \mathbb{V}(X_n)$
\[
    v_{n+1}
    = \sum_{0\le k\le n}\pi_{n,k}\left(v_k + v_{n-k}\right) +u_n
    + \mathbb{V}(T_n),
\]
where $\mu_k := \mathbb{E}(X_n)$ and
\[
    u_n
    := \sum_{0\le k\le n} \pi_{n,k}
    \left(\mu_k + \mu_{n-k}-\mu_{n+1}
    +\mathbb{E}(T_n)\right)^2.
\]
In most cases, we have the estimate $\mu_k = \tilde{f}_1(k)
+O(k^\ve)$. This, together with the Gaussian approximation of the
binomial distribution, implies that
\begin{align*}
    u_n
    &\approx  \sum_{\substack{|k-n/2|=o(n^{2/3})\\
    k=n/2+x\sqrt{n}/2}} \pi_{n,k}\left(\tilde{f}_1\left(
    \frac n2+\frac{x}2\sqrt{n}\right)+\tilde{f}_1\left(
    \frac n2-\frac{x}2\sqrt{n}\right)- \tilde{f}_1(n+1)
    + \mathbb{E}(T_n)\right)^2\\
    &\approx \sum_{\substack{|k-n/2|=o(n^{2/3})\\
    k=n/2+x\sqrt{n}/2}} \pi_{n,k}\left(2\tilde{f}_1\left(
    \frac n2\right)- \tilde{f}_1(n)-\tilde{f}_1'(n)
    + \mathbb{E}(T_n)\right)^2\\
    &\approx \left(2\tilde{f}_1\left(
    \frac n2\right)- \tilde{f}_1(n)-\tilde{f}_1'(n)
    + \mathbb{E}(T_n)\right)^2.
\end{align*}
But then (see (\ref{dfe-DST}) below)
\[
    2\tilde{f}_1\left(\frac n2\right)- \tilde{f}_1(n)
    -\tilde{f}_1'(n)+ \mathbb{E}(T_n)
    = \mathbb{E}(T_n) - \tilde{h}_1(n) ,
\]
where
\[
    \tilde{h}_1(z)
    := e^{-z} \sum_{j\ge0}\frac{\mathbb{E}(T_j)}{j!}\,z^j.
\]
The order of the difference $\mathbb{E}(T_n)- \tilde{h}_1(n)\approx
n|\tilde{h}_1''(n)|$ are expected to be small, roughly $O(n^\ve)$ in
all cases we consider here. Consequently, the variance is
asymptotically linear; see \cite{hubalek02a,schachinger95b} for more
precise details.

We see clearly that the smallness of the variance results naturally
from the high concentration of the binomial distribution near its
mean.

\end{document}